\newcommand{\R}{\mathbb{R}}
\newcommand{\N}{\mathbb{N}}
\renewcommand{\epsilon}{\varepsilon}
\newcommand{\e}{\varepsilon}
\numberwithin{equation}{section}
\newtheorem{theorem}{Theorem}[section]
\newtheorem{lemma}[theorem]{Lemma}
\newtheorem{definition}[theorem]{Definition}
\newtheorem{remark}[theorem]{Remark}
\newtheorem{proposition}[theorem]{Proposition}
\newtheorem{corollary}[theorem]{Corollary}
\renewcommand{\leq}{\leqslant}
\renewcommand{\le}{\leqslant}
\renewcommand{\geq}{\geqslant}
\renewcommand{\ge}{\geqslant}
\title{Integral operators defined ``up to a polynomial''}
\author{Serena Dipierro}
\address{University of Western Australia,
Department of Mathematics and Statistics, 35 Stirling Highway,
WA6009 Crawley, Australia}
\email{serena.dipierro@uwa.edu.au}
\author{Aleksandr Dzhugan}
\address{Universit\`a di Bologna, Dipartimento di Matematica,
Piazza di Porta San Donato 5, 40126 Bologna, Italy}
\email{aleksandr.dzhugan2@unibo.it}
\author{Enrico Valdinoci}
\address{University of Western Australia,
Department of Mathematics and Statistics, 35 Stirling Highway,
WA6009 Crawley, Australia}
\email{enrico.valdinoci@uwa.edu.au}
\begin{document}

\begin{abstract}
We introduce a suitable notion of integral operators (comprising the fractional Laplacian as a particular case)
acting on functions with minimal requirements at infinity.
For these functions, the classical definition would lead to divergent expressions,
thus we replace it with an appropriate framework obtained by a cut-off procedure. The notion obtained
in this way quotients out the polynomials which produce the divergent pattern once the cut-off is removed.

We also present results of stability under the appropriate
notion of convergence and compatibility results between polynomials of different orders.
Additionally, we address the solvability of the Dirichlet problem.

The theory is developed in general in the pointwise sense. A viscosity counterpart
is also presented under the additional assumption that the interaction kernel has a sign,
in conformity with the maximum principle structure.
\end{abstract}

\keywords{Integral operators, growth at infinity, fractional equations}
\subjclass[2020]{45H05, 26A33, 35R11}

\maketitle

\section{Introduction}

A classical line of investigation in mathematical analysis and mathematical physics consists in the study
of integro-differential operators. The motivations for this stream of research come both from
theoretical mathematics (such as harmonic analysis, singular integral theory, fractional calculus, etc.)
and concrete problems in applied sciences 
(with questions related to water waves, crystal dislocations, finance, optimization, minimal surfaces, etc.):
see e.g. the introduction in~\cite{BOOKCA} and the references therein for a number of explicit motivations
and examples.

A special focus of this stream of research deals with linear
integro-differential operators of the form
\begin{equation} \label{Au}
Au(x)={\mbox{P.V.}}\int_{\mathbb{R}^n}(u(x)-u(y))K(x,y)\,dy=
\lim_{\e\searrow0}\int_{\mathbb{R}^n\setminus B_\e(x)}(u(x)-u(y))K(x,y)\,dy.
\end{equation}
As customary, the notation~$B_\e(x)$ denotes the open ball of radius~$\e$ centered at the point~$x$
(when~$x$ is the origin, one simply uses the notation~$B_\e$ for short).
The notation ``P.V.'' above (which will be omitted in the rest of this paper for the sake of simplicity)
means ``in the principal value sense'' and takes into account possible integral cancellations.
The action of such operator  is to ``weight'' the oscillations of the function~$u$
according to the kernel~$K$. To make sense of the expression above, two types of assumptions
need to be accounted for:
\begin{itemize}
\item[\checkmark] 
if the kernel~$K$ is singular when~$x=y$,
the function $u$ needs to be regular enough near the point $x$
(to allow integral cancellations and take advantage of the principal value in~\eqref{Au}),
\item[\checkmark] the function $u$ needs to be sufficiently well-behaved at infinity
(namely, its growth has to be balanced by the kernel~$K$
to obtain in~\eqref{Au} a convergent integral at infinity).
\end{itemize}
Roughly speaking, these two conditions correspond to the request that the integral in~\eqref{Au}
converges both in the vicinity of the given point~$x$ and at infinity.
With respect to this,
the regularity condition is necessary for the local convergence of the integral 
and it is common to differential (rather than integral) operators:
in a sense, for differential problems
the regularity of~$u$ ensures
that incremental quotients converge to derivatives
and, somewhat similarly, for integral problems
the regularity of~$u$ allows the increment inside the integral
to compensate the possible singularity of the kernel.
Instead, the second assumption on the behavior of~$u$ at infinity is needed only to guarantee the tail convergence,
it is a merely nonlocal feature
and has no counterpart for the case
of differential operators.

Conditions ``at infinity'' are also technically more difficult to deal with.
First of all, they are more expensive to be computed, since they need to account for
virtually all the values of the given function (while regularity ones
deal with the values in an arbitrarily small region). Furthermore,
these conditions are typically lost after one analyzes the problem at a small scale
(since blow-up procedures alter the behavior of the solutions at infinity,
with the aim of detecting the local patterns).
Moreover, it is sometimes difficult to detect optimal assumptions
for nonlocal problems even in very basic and fundamental questions
(see e.g. the open problem after Theorem~3.2 in~\cite{MR3916700}),
hence any theory based only on ``essential'' assumptions is doomed to have promising
future developments.

It would be therefore very desirable to develop a theory of integral
operators that does not heavily rely on the conditions at infinity
(in spite of the striking fact that these conditions are needed
even in the definition of the operator itself!).
To this end, a theory of ``fractional Laplacian operators up to polynomials''
has been developed in~\cite{MR3988080, DSV19}
to address the case of functions with polynomial growth
(see also~\cite{MR1421222} for related approaches;
see e.g.~\cite{MR2707618, MR2944369, MR3613319, MR3967804, MR4303657}
and the references therein for the basics
on the fractional Laplace operators).
The gist of this method
is to consider the family of cut-offs 
\begin{equation} \label{chi}
\chi_R(x):=\begin{cases}
1,&{\mbox{ if }}x \in B_R;\\
0,&\mbox{otherwise}
\end{cases}
\end{equation}
and apply the operator to the function~$\chi_R u$. Of course, in general,
it is not possible to send~$R\to+\infty$, since the operator is not well-defined on~$u$,
nevertheless it is still possible to perform such an operation once an appropriate polynomial
is ``taken out'' from the equation. Given the ``rigidity'' of the space of polynomials
(which is finite dimensional and easily computable) the method is flexible
and solid, it produces interesting results and can be efficiently combined with blow-up procedures,
see~\cite{MR3770173, MR4038144}.\medskip

The goal of this note is twofold: on the one hand, we review and extend
the theory developed in~\cite{MR3988080, DSV19}, on the other hand, we generalize
the previous setting in order to include much more general classes of kernels
(in particular, kernels which are not necessarily scaling invariant).
Besides its interest in pure mathematics, this generalization
has a concrete impact on the study of
interaction potentials of interatomic type arising in molecular mechanics and materials science, such as
the Morse potential~\cite{MORS}
\begin{equation}\label{E-dX-PO-2} K(x,y)= e^{-2(|x-y|-1)}-e^{-(|x-y|-1)},\end{equation}
the Buckingham potential~\cite{BUCK}
\begin{equation}\label{E-dX-PO-3} K(x,y)=e^{-|x-y|}-\frac1{|x-y|^6},\end{equation}
as well as their desingularized forms obtained by setting~$K_\e(x,y):=\min\left\{\frac1\e,\,K(x,y)
\right\}$.
Other classical potentials arising in probability and modelization include also the Gauss kernel
\begin{equation}\label{E-dX-PO-4}
K(x,y)=e^{-|x-y|^2},\end{equation}
the Abel kernel
\begin{equation}\label{E-dX-PO-5}
K(x,y)=e^{-|x-y|},\end{equation}
the mollification kernel
\begin{equation}\label{E-dX-PO-6}
K(x,y)=\begin{cases}e^{-\frac1{1-|x-y|^2}} & {\mbox{ if }}|x-y|<1,
\\0&{\mbox{ if }}|x-y|\geq 1\end{cases}
\end{equation}
and the class of kernels comparable to that of the fractional
Laplacian
\begin{equation}\label{FL}
\frac{\lambda}{|x-y|^{n+2s}}\le K(x,y)\le \frac{\Lambda}{|x-y|^{n+2s}}
\end{equation}
for~$s\in(0,1)$ and~$\Lambda\ge \lambda >0$.
\medskip

The theory of integral operators that we develop
is broad enough to include the kernels above (and others as well) into a unified setting.
The operators will be suitably defined
``up to a polynomial'', in a sense that will be made precise in Definition~\ref{def}.
This framework relies on a suitable decomposition of the integral operator
with respect to cut-off functions that is showcased in Theorem~\ref{cut-off}.
This setting is stable under the appropriate
notion of convergence, as it will be detailed in Proposition~\ref{forthprop},
and it presents nice compatibility results between polynomials of different orders,
as it will be pointed out in
Corollary~\ref{corequiv} and Lemma~\ref{lemmaj}.
We also stress that the generalized notion of operators that we deal with
is ``as good as the classical one'' in terms of producing solutions
for the associated Dirichlet problem: indeed,
as it will be clarified in 
Theorem~\ref{KS:098iKS-904596}, the solvability
of the classical Dirichlet problem in the class of functions
with nice behavior at infinity is sufficient to ensure the solvability
of the generalized Dirichlet problem for the operator defined ``up to a polynomial''.\medskip

To develop this theory, we mainly focused on the case of sufficiently smooth
(though not necessarily well-behaved at infinity) functions. This choice was dictated by
three main reasons. First of all, we aimed at developing the core of the theory by focusing
on its essential features, rather than complicating it by additional difficulties.
Moreover, we intended to split the complications arising
from the possible lack of smoothness of the solutions with those produced by their behavior at infinity,
consistently with the initial discussion
presented right after~\eqref{Au}. Additionally, we stress that
the generality of kernels addressed by our theory
goes far beyond the ones of ``elliptic'' type, therefore a comprehensive
regularity theory does not hold in such an extensive framework.\medskip

However, one can also recast our theory in terms of viscosity solutions.
For this, since viscosity theory relates to maximum principles,
one needs the additional assumption that the kernel has a sign.
In particular, in this context
one can obtain a viscosity definition of operators ``up to a polynomial''
(see Definition~\ref{defv}) and discuss its stability properties under uniform convergence
(see Lemma~\ref{lemCSv}) and the consistency properties with respect of polynomials of different degree
(see Corollary~\ref{corequivv}
and Lemma~\ref{lemmajv}).
When the structure is compatible with both settings,
the pointwise framework and the viscosity one are essentially equivalent
(see Lemma~\ref{EQUIBVA}).
Furthermore, for kernels comparable with that of the fractional Laplacian
a complete solvability of the Dirichlet problem can be obtained
(see Theorem~\ref{EQUI90-97900BVA}).\medskip

In the forthcoming Section~\ref{SEC-1} we introduce the main
definitions for integral operators ``up to a polynomial''
and present their fundamental properties. The corresponding Dirichlet problem
will be discussed in Section~\ref{SEC-2}. 

While Sections~\ref{SEC-1} and~\ref{SEC-2}
focus on the pointwise definition of this generalized notion of operators,
we devote Section~\ref{VIS-PJN-c} to the corresponding viscosity theory.

\section{Definitions and main properties of operators ``up to a polynomial''}\label{SEC-1}

The mathematical setting in which we work is the following.
For every~$\vartheta\in[0,2]$,
we define~${\mathcal{C}}_\vartheta$ as the set of
functions~$u\in L^1_{{\rm{loc}}}(\R^n)$ such that
\begin{equation}\label{DEF:SPAZ} u\in
\begin{cases}C(B_4)\cap
L^\infty(B_4) &{\mbox{ if }} \vartheta=0,\\
C^\vartheta(B_4) &{\mbox{ if }} \vartheta\in(0,1),\\
C^{0,1}(B_4) &{\mbox{ if }} \vartheta=1,\\
C^{1,\vartheta-1}(B_4) &{\mbox{ if }} \vartheta\in(1,2),\\
C^{1,1}(B_4) &{\mbox{ if }} \vartheta=2.
\end{cases}\end{equation}
Furthermore, for all~$m\in\N_0$ and all~$\vartheta\in[0,2]$, we
introduce~${\mathcal{K}}_{m,\vartheta}$ as the space of kernels~$K=K(x,y)$
such that\footnote{As customary,
in this paper we denote by~$\Omega^c$
the complementary set $\mathbb{R}^n \setminus \Omega$ for a given $\Omega \subseteq \mathbb{R}^n$.}
\begin{eqnarray}
\label{taylor}
&&{\mbox{for all~$y\in B_3^c$, the map~$x\in B_1\mapsto K(x,y)$ is~$C^m(B_1)$}}
\\
\label{locint}{\mbox{and }}
&&\int_{\mathbb{R}^n}\min\{|x-y|^\vartheta,1\} \,|K(x,y)|\,dy<+\infty\quad
{\mbox{ for all $x \in B_1$.}}
\end{eqnarray}\
If~$\vartheta\in(1,2]$ we require additionally that every~$K\in{\mathcal{K}}_{m,\vartheta}$
satisfies\footnote{We observe that these assumptions are satisfied
by the kernel in~\eqref{E-dX-PO-2} for every~$n$,
by the kernel in~\eqref{E-dX-PO-3} with~$n=5$ and by all the corresponding desingularized kernels for every~$n$.
The kernels in~\eqref{E-dX-PO-4}, \eqref{E-dX-PO-5} and~\eqref{E-dX-PO-6}
also satisfy these assumptions for every~$n$. The kernel in~\eqref{FL} satisfies~\eqref{locint} for every~$n$
and every~$\vartheta\in(2s,2]$.}
\begin{equation}\label{assym}
K(x,x+z)=K(x,x-z)\quad {\mbox{ for all~$x$, $z \in B_1$}}.\end{equation}
Given~$K\in{\mathcal{K}}_{m,\vartheta}$, we consider the space~${\mathcal{C}}_{\vartheta,K}$
of all the functions~$u\in{\mathcal{C}}_\vartheta$ for which
\begin{eqnarray}
\label{ring}
&&\sum_{|\alpha|\le m-1}\int_{B_R\setminus B_3}
|u(y)|\,|\partial^\alpha_x K(x,y)|\,dy<+\infty
\quad {\mbox{ for all~$R>3$ and $x \in B_1$}}\\
\label{mcond}{\mbox{and }}
&&\int_{B^c_3} |u(y)|\sup_{{|\alpha|=m}\atop{x\in B_1}}
|\partial^\alpha_x K(x,y)|\,dy<+\infty.
\end{eqnarray}

In this setting, we have the following results that show the role played by a cut-off
function in the computation of the operator in~\eqref{Au} on functions in~${\mathcal{C}}_{\vartheta,K}$:

\begin{theorem} \label{cut-off}
Let\footnote{In this paper, we use the notation~$\mathbb{N}_0$ to denote the set of natural numbers including zero, that is $\{0,1,2,3,\dots\}$. The notation~$\mathbb{N}$ is instead reserved for the strictly positive natural numbers~$\{1,2,3,\dots\}$,
therefore~$\mathbb{N}_0=
\mathbb{N} \cup 0$.
For~$m=0$, the result in Theorem~\ref{cut-off} holds true,
simply by taking~$P_{u,\tau}:=0$ and~$\psi(x,y):=K(x,y)$.} $m \in \mathbb{N}_0$, $\vartheta\in[0,2]$,
$K\in {\mathcal{K}}_{m,\vartheta}$
and~$u\in {\mathcal{C}}_{\vartheta,K}$.
Let also~$\tau:\mathbb{R}^n \to [0,1]$ be
compactly supported and such that~$\tau = 1$ in $B_3$.

Then, there exist a
function $f_{u,\tau}:\mathbb{R}^n\to \mathbb{R}$
and a polynomial $P_{u,\tau}$ of degree at most $m-1$ such that 
\begin{equation} \label{p+f}
A(\tau u)=P_{u,\tau}+f_{u,\tau}
\end{equation}
in $B_1$.
In addition, $f_{u,\tau}$ can be written in the following form: there exists $\psi: B_1 \times B_3^c \to \mathbb{R}$, with\footnote{Notice that
the right hand side of~\eqref{psibound} may be infinite and in this case~\eqref{psibound} is obviously true.
On the contrary, if the right hand side of~\eqref{psibound} is finite, then the quantity on the left hand side
of~\eqref{psibound} is bounded as well.}
\begin{equation} \label{psibound}
\sup_{x \in B_1}|\partial^\gamma_x\psi(x,y)|\le  C\,
\sup_{{x\in B_1}\atop{m\le|\eta|\le m+|\gamma|}}|\partial^\eta_x K(x,y)|,
\end{equation}
for every~$\gamma\in{\mathbb{N}}^n$ and for a suitable constant~$C>0$ depending
on~$m$, $n$ and~$|\gamma|$,
such that
\begin{equation} \label{repr}
f_{u,\tau}=f_{1,u}+f_{2,u}+f^*_{u,\tau},
\end{equation}
where
\begin{equation} \label{f12*}
\begin{split}
&f_{1,u}(x):=\int_{B_3} \big(u(x)-u(y)\big)K(x,y)\,dy  ,  \\
&f_{2,u}(x):=u(x)\int_{B^c_3}K(x,y)\,dy     \\
{\mbox{and }}\qquad &f^*_{u,\tau}(x):=\int_{B^c_3}\tau(y) u(y)\psi(x,y)\,dy.
\end{split}
\end{equation}
\end{theorem}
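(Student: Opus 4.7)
The plan is to split $A(\tau u)(x)$ for $x\in B_1$ into contributions from $B_3$ and $B_3^c$. Since $\tau\equiv 1$ on $B_3\supseteq B_1$, one has $(\tau u)(x)=u(x)$, and for $y\in B_3$ also $(\tau u)(y)=u(y)$. Therefore
\[
A(\tau u)(x) = \int_{B_3}(u(x)-u(y))K(x,y)\,dy + \int_{B_3^c}(u(x)-\tau(y)u(y))K(x,y)\,dy;
\]
the first summand is exactly $f_{1,u}(x)$, and the second, by linearity, equals $u(x)\int_{B_3^c}K(x,y)\,dy-\int_{B_3^c}\tau(y)u(y)K(x,y)\,dy=f_{2,u}(x)-\int_{B_3^c}\tau(y)u(y)K(x,y)\,dy$. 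Thus the entire task reduces to expressing this last integral (up to sign) as a polynomial in $x$ of degree at most $m-1$ plus an integral of the form $\int_{B_3^c}\tau(y)u(y)\psi(x,y)\,dy$.

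To produce this decomposition, I would Taylor-expand $K(x,y)$ in the variable $x$ around $x=0$ up to order $m-1$. By \eqref{taylor}, for each $y\in B_3^c$ the map $x\mapsto K(x,y)$ is $C^m(B_1)$, so the integral form of Taylor's remainder gives
\[
K(x,y)=\sum_{|\alpha|\le m-1}\frac{x^\alpha}{\alpha!}\,\partial^\alpha_xK(0,y)\,+\,m\sum_{|\alpha|=m}\frac{x^\alpha}{\alpha!}\int_0^1(1-t)^{m-1}\partial^\alpha_xK(tx,y)\,dt.
\]
Integrating $-\tau(y)u(y)$ against this identity over $B_3^c$, the Taylor polynomial part defines
\[
P_{u,\tau}(x):=-\sum_{|\alpha|\le m-1}\frac{x^\alpha}{\alpha!}\int_{B_3^c}\tau(y)u(y)\,\partial^\alpha_xK(0,y)\,dy,
\]
whose coefficients are finite since $\tau u$ is compactly supported in some $B_R$ with $R>3$ and $\int_{B_R\setminus B_3}|u(y)\,\partial^\alpha_xK(0,y)|\,dy<\infty$ by \eqref{ring}. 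The remainder part prescribes
\[
\psi(x,y):=-m\sum_{|\alpha|=m}\frac{x^\alpha}{\alpha!}\int_0^1(1-t)^{m-1}\partial^\alpha_xK(tx,y)\,dt,
\]
so that $f^*_{u,\tau}(x)=\int_{B_3^c}\tau(y)u(y)\psi(x,y)\,dy$ captures exactly the Taylor-remainder contribution, and \eqref{p+f}--\eqref{f12*} follow.

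It then remains to verify convergence of each piece together with \eqref{psibound}. For $f_{1,u}$, I would split $B_3$ into $\{|y-x|\ge 1\}$ (where $|u(x)-u(y)|\le 2\|u\|_{L^\infty(B_4)}$ and $|K|$ is integrable by \eqref{locint}) and $\{|y-x|<1\}\subset B_2\subset B_3$; on the latter, when $\vartheta\le 1$ the Hölder/Lipschitz bound $|u(x)-u(y)|\le C|x-y|^\vartheta$ combined with \eqref{locint} suffices, while when $\vartheta\in(1,2]$ I would Taylor-expand $u(y)=u(x)+\nabla u(x)\cdot(y-x)+O(|x-y|^\vartheta)$ and use the symmetry hypothesis \eqref{assym} (after the change of variable $z=y-x\in B_1$) to make the linear contribution $\nabla u(x)\cdot\int_{B_1}zK(x,x+z)\,dz=0$ vanish, with the $O(|x-y|^\vartheta)$ remainder again integrable via \eqref{locint}. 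Convergence of $f_{2,u}$ is immediate since $|y-x|\ge 2$ on $B_3^c$ for $x\in B_1$, so $\int_{B_3^c}|K(x,y)|\,dy<\infty$ by \eqref{locint}. Convergence of $f^*_{u,\tau}$ follows from the pointwise bound $|\psi(x,y)|\le C\sup_{|\eta|=m,\,x\in B_1}|\partial^\eta_xK(x,y)|$ combined with \eqref{mcond}. For \eqref{psibound}, applying Leibniz to the explicit formula for $\psi$ produces terms where derivatives on $x^\alpha$ give polynomial factors bounded on $B_1$ and derivatives on $\partial^\alpha_xK(tx,y)$ give $t^{|\gamma-\beta|}\partial^{\alpha+\gamma-\beta}_xK(tx,y)$, with $|\alpha|+|\gamma-\beta|$ ranging exactly over $[m,m+|\gamma|]$; taking the supremum over $x\in B_1$ (which includes $tx$) yields \eqref{psibound}. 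The hardest single step is the treatment of $f_{1,u}$ when $\vartheta>1$, where the symmetry assumption \eqref{assym} is essential to cancel the otherwise non-integrable linear term.
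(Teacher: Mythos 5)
Your proposal is correct and follows the same strategy as the paper's proof: split $A(\tau u)$ into contributions over $B_3$ and $B_3^c$, Taylor-expand $K(\cdot,y)$ at $x=0$ with the integral-form remainder to produce $P_{u,\tau}$ and $\psi$, and then verify convergence of $f_{1,u}$, $f_{2,u}$, $f^*_{u,\tau}$ together with \eqref{psibound} via the Leibniz rule. The one minor variation is in the $\vartheta\in(1,2]$ case for $f_{1,u}$: the paper symmetrizes first, obtaining $\frac12\int_{B_1}\big(2u(x)-u(x+z)-u(x-z)\big)K(x,x+z)\,dz$ and bounding the centered second difference by $L|z|^\vartheta$ (which gives absolute convergence directly), whereas you subtract the first-order Taylor polynomial of $u$ and appeal to oddness of $z\mapsto z\,K(x,x+z)$; since $\int_{B_1}|z|\,|K(x,x+z)|\,dz$ need not converge absolutely under \eqref{locint}, you should perform that cancellation at the level of truncated integrals over $B_1\setminus B_\varepsilon$, where the oddness from \eqref{assym} kills the linear piece identically for each $\varepsilon$, and then pass to the limit using only the absolutely convergent remainder.
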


\begin{proof}
We observe that
\begin{equation}\label{PAC1}
{\mbox{$f_{1,u}$ and~$f_{2,u}$ are well-defined and finite
for every~$x\in B_1$. }}\end{equation}
To check this, we first consider the case in which~$\vartheta\in[0,1]$. 
In this case, for every~$x\in B_1$ and~$y\in B_3$,
$$|u(x)-u(y)|\le C |x-y|^\vartheta,$$
for some~$C>0$, and thus
\begin{eqnarray*}
&& |f_{1,u}(x)|\le
\int_{B_3}\big|u(x)-u(y)\big|\, |K(x,y)|\,dy\le C\int_{B_3}|x-y|^\vartheta\, |K(x,y)|\,dy\\&&\qquad\le C\left(
\int_{B_3\cap B_1(x)}|x-y|^\vartheta\,|K(x,y)|\,dy+
\int_{B_3\setminus B_1(x)}|K(x,y)|\,dy\right),
\end{eqnarray*}
up to renaming~$C>0$, and this shows that~$f_{1,u}$ is
well-defined and finite, thanks to~\eqref{locint}.

If instead~$\vartheta\in(1,2]$ we claim that, since~$u \in {\mathcal{C}}_\vartheta$,
there exists a constant $L>0$ such that for all~$|z|$ sufficiently
small (say~$z\in B_1$) we have that
\begin{equation}\label{sayzinbi}
|2u(x)-u(x+z)-u(x-z)|\le L|z|^\vartheta.\end{equation}
Indeed, in this case we know that~$u\in C^{1,\vartheta-1}(B_4)$
and thus
\begin{equation}\begin{split}\label{siw74b8437654693}
&|2u(x)-u(x+z)-u(x-z)|= |(u(x)-u(x+z))+(u(x)-u(x-z))|
\\&\qquad=\left|-\int_{0}^1 \nabla u(x+tz)\cdot z\,dt+
\int_{0}^1 \nabla u(x-tz)\cdot z\,dt
\right|
\\
&\qquad\le \int_{0}^1 |\nabla u(x+tz)-\nabla u(x-tz)|\,|z|\,dt\\
&\qquad\le C \int_{0}^1 |x+tz-(x-tz)|^{\vartheta-1}|z|\,dt=
C \int_{0}^1 t^{\vartheta-1}|z|^{\vartheta-1}|z|\,dt =C|z|^\vartheta,
\end{split}\end{equation}
up to relabeling~$C$ at every step. This establishes~\eqref{sayzinbi}.

Now, we notice that
\begin{equation} \label{loc}
\begin{split}& f_{1,u}(x)=
\int_{B_3}\big(u(x)-u(y)\big)K(x,y)\,dy \\
&\qquad=\int_{B_1(x)}\big(u(x)-u(y)\big)K(x,y)\,dy
+\int_{B_3\setminus B_1(x)}\big( u(x)- u(y)\big)K(x,y)\,dy\\&\qquad
=:I_1+I_2.
\end{split}
\end{equation}
Using \eqref{assym} and~\eqref{sayzinbi}, we obtain that
\begin{eqnarray*}&&
\left|\int_{B_1(x)}\big(u(x)-u(y)\big)K(x,y)\,dy\right|
\\&&\quad=
\frac{1}{2}\left|\int_{B_1}
\big(u(x)-u(x+z)\big)K(x,x+z)\,dz+
\int_{B_1} \big(u(x)-u(x-z)\big)K(x,x-z)\,dz\right|
\\&&\quad=
\frac{1}{2}\left|\int_{B_1}
\big(2u(x)-u(x+z)-u(x-z)\big)K(x,x+z)\,dz\right|
\\&&\quad\le \frac12\int_{B_1}
\big|2u(x)-u(x+z)-u(x-z)\big|\, |K(x,x+z)|\,dz\\&&\quad \le
\frac{L}2\int_{B_1}|z|^\vartheta\, |K(x,x+z)|\,dz\\&&\quad
\le \frac{L}2\int_{B_1}|z|^\vartheta\,|K(x,x+z)|\,dz.
\end{eqnarray*}
As a consequence,
\begin{equation}\label{ehdfweurui056485}
|I_1|
\le \frac{L}2\int_{B_1}|z|^\vartheta\,|K(x,x+z)|\,dz,
\end{equation}
which is finite, thanks to \eqref{locint}.

Furthermore, 
\begin{eqnarray*}
|I_2|\le \int_{B_3\setminus B_1(x)}\big( |u(x)|+| u(y)|\big)\,|K(x,y)|\,dy\le
2\|u\|_{L^\infty(B_4)} \int_{B_3\setminus B_1(x)}|K(x,y)|\,dy,
\end{eqnarray*}
which is
finite, in light of~\eqref{locint}.
This, together with~\eqref{loc}
and~\eqref{ehdfweurui056485}, proves that~$f_{1,u}$
is well-defined and finite in the case~$\vartheta\in(1,2]$.

Also, $f_{2,u}$ is well-defined and finite for every~$\vartheta\in[0,2]$,
thanks to~\eqref{locint}. These observations establish~\eqref{PAC1}.

As a consequence, for any $x \in B_1$, we can write
\begin{equation} \label{atu}
\begin{split}
&A(\tau u)(x)\\=\;&\int_{B_3}\big((\tau u)(x)-(\tau u)(y)\big)K(x,y)\,dy
+\int_{B^c_3}\big((\tau u)(x)-(\tau u)(y)\big)K(x,y)\,dy \\
=\;&\int_{B_3}\big(u(x)-u(y)\big)K(x,y)\,dy
+u(x)\int_{B^c_3}K(x,y)\,dy-\int_{B^c_3}(\tau u)(y)K(x,y)\,dy\\
=\;& f_{1,u}(x)+f_{2,u}(x)-\int_{B^c_3}(\tau u)(y)K(x,y)\,dy.
\end{split}
\end{equation}

Now, in light of the assumption in~\eqref{taylor}, we are allowed to use Proposition~5.34
in~\cite{chierchia} (see also e.g. Theorem~4 on page 461 of~\cite{MR2033094}) and we find that
$$K(x,y)=\sum_{|\alpha|\le m-1}\partial^{\alpha}_xK(0,y)\frac{x^{\alpha}}{\alpha!}-\psi(x,y),$$
where
\begin{equation}\label{psidef}
\psi(x,y):=-\sum_{|\alpha|=m}\frac{m\,x^\alpha}{\alpha!}\int_0^1(1-t)^{m-1}
\partial^\alpha_x K(tx,y )\,dt
.\end{equation}
As a consequence,
\begin{equation} \label{oi3wi5v9b76}
\begin{split}
&\int_{B^c_3}(\tau u)(y)K(x,y)\,dy=\int_{B^c_3} (\tau u)(y)\left(\sum_{|\alpha|\le m-1}\partial^{\alpha}_xK(0,y)\frac{x^{\alpha}}{\alpha!}-\psi(x,y)\right)\,dy \\
&\qquad
=\sum_{|\alpha|\le m-1}\left(\int_{B^c_3} (\tau u)(y)\partial^{\alpha}_x K(0,y)\,dy\right)\frac{x^{\alpha}}{\alpha!}
-\int_{B^c_3}(\tau u)(y)\psi(x,y)\,dy.
\end{split}
\end{equation}
Now, we set, for every $|\alpha|\le m-1$,
\begin{equation} \label{theta}
\theta_{\tau,\alpha}:=\int_{B^c_3} (\tau u)(y)\frac{\partial^{\alpha}_x K(0,y)}{\alpha!}\,dy.
\end{equation}
Suppose that the support of~$\tau$ is contained in some ball~$B_R$
with~$R>3$, and thus
\begin{equation} \label{finitetheta}
 |\theta_{\tau,\alpha}|\le
\int_{B_R\setminus B_3} \left|(\tau u)(y)\frac{\partial^{\alpha}_x K(0,y)}{\alpha!}\right|\,dy\le
\frac{1}{\alpha!}
\int_{B_R\setminus B_3} |u(y)|\, \big|\partial^{\alpha}_x K(0,y)\big|\,dy.
\end{equation}
We stress that the coefficients $\theta_{\tau,\alpha}$ are well-defined, thanks to~\eqref{ring}.

Hence, setting
\begin{equation} \label{P}
P_{u,\tau}(x):=-\sum_{|\alpha|\le m-1} \theta_{\tau, \alpha} x^{\alpha}
\end{equation}
we have that~$P_{u,\tau}$ is a polynomial in~$x$ of degree
at most~$m-1$.
Plugging this information into~\eqref{oi3wi5v9b76}, we obtain that
\begin{equation*}
\int_{B^c_3}(\tau u)(y)K(x,y)\,dy=
-P_{u,\tau}(x)-
\int_{B^c_3}(\tau u)(y)\psi(x,y)\,dy.
\end{equation*}
Now, we notice that, for all $x \in B_1$ and all~$y\in B_3^c$,
$$
|(\tau u)(y)\psi(x,y)|\le
C|u(y)|\sup_{{|\alpha|=m}\atop{z\in B_1}}|\partial^\alpha_x K(z,y )|,
$$
for some~$C>0$, possibly depending on~$m$ and~$n$.
The last function lies in $L^1(B_3^c)$, thanks to~\eqref{mcond}, and
therefore, recalling the definition of~$f^*_{u,\tau}$ in~\eqref{f12*},
we have that
\begin{equation}\label{PAC2}
{\mbox{$f^*_{u,\tau}$ is well-defined and finite.}}\end{equation}
With this setting, we have that
$$ 
\int_{B^c_3}(\tau u)(y)K(x,y)\,dy=
-P_{u,\tau}(x)-f^*_{u,\tau}(x),$$
and therefore, plugging this information into~\eqref{atu}
and recalling~\eqref{repr}, we obtain~\eqref{p+f}.

Hence, to complete the proof of Theorem~\ref{cut-off},
it remains to check~\eqref{psibound}.
For this, recalling the definition of~$\psi$ in~\eqref{psidef},
we have that, for all~$x \in B_1$ and all $y \in B_3^c$,
\begin{eqnarray*}
\partial^\gamma_x \psi(x,y)&=&
\sum_{|\alpha|=m}\int_0^1c_\alpha(t)\,\partial^\gamma_x\left(x^\alpha\,
\partial^\alpha_x K(tx,y )\right)\,dt\\&=&
\sum_{|\alpha|=m}\int_0^1c_\alpha(t)\,\sum_{\beta\le\gamma}\binom{\gamma}{\beta} 
\partial^\beta_x(x^\alpha)\,\partial_x^{\gamma-\beta}
(\partial^\alpha_x K(tx,y ))\,dt\\&=&
\sum_{|\alpha|=m}\int_0^1c_\alpha(t)\,\sum_{\beta\le\gamma} \binom{\gamma}{\beta} 
\partial^\beta_x(x^\alpha)\,t^{|\gamma-\beta|}\,
\partial^{\alpha+\gamma-\beta}_x K(tx,y )\,dt
,\end{eqnarray*}
where~$c_\alpha(t):=\frac{m}{\alpha!}(1-t)^{m-1}$.
Here~$\beta\le\gamma$ means that~$\beta_1\le\gamma_1$, $\cdots$, $\beta_n\le\gamma_n$ and\
we used the notation
$$  \binom{\gamma}{\beta} = \binom{\gamma_1}{\beta_1}\times \cdots  \times\binom{\gamma_n}{\beta_n}.$$ 
Hence,
$$ |\partial^\gamma_x \psi(x,y)|\le C \sup_{{z\in B_1}\atop{m\le|\eta|\le m+|\gamma|}}
|\partial_x^\eta K(z,y)|,$$
for some~$C>0$ depending on~$m$, $n$ and~$|\gamma|$. This establishes~\eqref{psibound}.
\end{proof}

\begin{corollary} \label{corlim}
Let $m \in \mathbb{N}_0$, $\vartheta \in [0,2]$,
$K\in {\mathcal{K}}_{m,\vartheta}$,
$u \in \mathcal{C}_{\vartheta,K}$
and~$R>3$. Let $\tau_R: \mathbb{R}^n \to [0,1]$ be supported in $B_R$,
with~$\tau_R=1$ in~$B_3$, and such that
\begin{equation} \label{limtau}
\lim_{R \to +\infty}\tau_R=1 \mbox{ a.e. in }\mathbb{R}^n.
\end{equation}
Then, there exist a function $f_u: \mathbb{R}^n\to \mathbb{R}$ and a family of polynomials $P_{u,\tau_R}$, which have degree at most $m-1$, such that
\begin{equation} \label{limA}
\lim_{R\to +\infty}[A(\tau_R u)(x)-P_{u,\tau_R}(x)]=f_u(x)
\end{equation}
for any $x\in B_1$. More precisely, we have that
\begin{equation} \label{repr2}
f_u=f_{1,u}+f_{2,u}+f_{3,u},
\end{equation}
where $f_{1,u}$ and $f_{2,u}$ are as in \eqref{f12*} and
\begin{equation} \label{f3}
f_{3,u}(x):=\int_{B^c_3}u(y)\psi(x,y)\,dy.
\end{equation}
\end{corollary}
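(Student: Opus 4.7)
The plan is to feed each cut-off $\tau_R$ into Theorem~\ref{cut-off} and then show that the only piece which genuinely depends on $R$, namely $f^*_{u,\tau_R}$, converges to $f_{3,u}$ by dominated convergence.

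More concretely, I would first note that since $\tau_R$ satisfies the assumptions of Theorem~\ref{cut-off} for each $R>3$, the decomposition \eqref{p+f} applies and yields, for every $x\in B_1$,
\begin{equation*}
A(\tau_R u)(x)-P_{u,\tau_R}(x)=f_{u,\tau_R}(x)=f_{1,u}(x)+f_{2,u}(x)+f^*_{u,\tau_R}(x),
\end{equation*}
with $\psi$ the same function defined in \eqref{psidef} for all $R$ (it depends only on $K$ and $m$). Since $f_{1,u}$ and $f_{2,u}$ do not involve $\tau_R$ at all, the whole question reduces to proving that
\begin{equation*}
\lim_{R\to+\infty}\int_{B_3^c}\tau_R(y)\,u(y)\,\psi(x,y)\,dy=\int_{B_3^c} u(y)\,\psi(x,y)\,dy=f_{3,u}(x)
\end{equation*}
for every fixed $x\in B_1$.

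For this I would apply the dominated convergence theorem. The pointwise convergence of the integrand is immediate from \eqref{limtau}, giving $\tau_R(y)u(y)\psi(x,y)\to u(y)\psi(x,y)$ for almost every $y\in B_3^c$. The integrable majorant is provided by applying \eqref{psibound} with $\gamma=0$, which yields
\begin{equation*}
|\tau_R(y)\,u(y)\,\psi(x,y)|\le C\,|u(y)|\sup_{{|\eta|=m}\atop{z\in B_1}}|\partial^\eta_x K(z,y)|
\end{equation*}
uniformly in $x\in B_1$ and in $R$, and this upper bound is in $L^1(B_3^c)$ thanks to \eqref{mcond}. Combining these ingredients produces the limit $f^*_{u,\tau_R}(x)\to f_{3,u}(x)$, and hence \eqref{limA} with the representation \eqref{repr2}.

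I do not expect any serious obstacle: the only delicate point is making sure the majorant from \eqref{psibound} is actually integrable at infinity, but this is exactly the content of the hypothesis \eqref{mcond} built into the definition of ${\mathcal{C}}_{\vartheta,K}$. In particular, the argument also implicitly shows that $f_{3,u}$ is well-defined and finite on $B_1$.
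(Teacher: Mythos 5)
Your proposal is correct and follows essentially the same route as the paper: apply Theorem~\ref{cut-off} with $\tau=\tau_R$, observe that only $f^*_{u,\tau_R}$ depends on $R$, and pass to the limit by dominated convergence using the majorant from \eqref{psibound} with $\gamma=0$ together with \eqref{mcond}. No gaps.
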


\begin{proof} 
We apply Theorem \ref{cut-off} with $\tau:= \tau_R$ for any fixed $R$, and then send $R \to +\infty$.
Indeed, by~\eqref{psibound} (used here with~$\gamma:=0$),
for any $x \in B_1$ and $y \in B_3^c$ we have
$$\big|
(\tau_R u)(y)\psi(x,y)\big|\le  C|u(y)|\sup_{{ |\eta|=m}\atop{ z\in B_1 }}|\partial^\eta_x K(z,y)|,$$
for some $C>0$ and the latter function of $y$ lies in $L^1(B_3^c)$, thanks to \eqref{mcond}.

Consequently, we use~\eqref{f12*}, \eqref{limtau} and the
Dominated Convergence Theorem, thus obtaining that
$$\lim_{R \to +\infty}f^*_{u,\tau_R}=\lim_{R \to +\infty}\int_{B_3^c}(\tau_R u)(y)\psi(x,y)\,dy=\int_{B_3^c} u(y)\psi(x,y)\, dy=f_{3,u}(x).$$
Accordingly, taking the limit in \eqref{p+f} we obtain~\eqref{limA}.
Also, the claims in~\eqref{repr2}
and~\eqref{f3} follow\footnote{It is also interesting
to point out that, when~$\tau_R:=\chi_R$,
\label{FPTUNIF}
the limit in~\eqref{limA}
is uniform for~$x\in B_1$. Indeed, by~\eqref{p+f}
and~\eqref{psibound}, for all~$R_2>R_1>4$,
\begin{eqnarray*} &&\sup_{x\in B_1}\Big| [A(\tau_{R_1} u)(x)-P_{u,\tau_{R_1}}(x)]-
[A(\tau_{R_2} u)(x)-P_{u,\tau_{R_2}}(x)]\Big|\\
&\leq&\sup_{x\in B_1}\int_{B_{R_2}\setminus B_{R_1}} | u(y)|\,|\psi(x,y)|\,dy\\
&\leq&
C\int_{B_{R_2}\setminus B_{R_1}}
|u(y)|\sup_{{ |\eta|=m}\atop{ z\in B_1 }}|\partial^\eta_x K(z,y)|
,\end{eqnarray*}
which is as small as we wish, owing to~\eqref{mcond}.
This observation will be further expanded in Lemma~\ref{cor37}.}
from~\eqref{f12*}.
\end{proof}

We are now ready to introduce the formal setting to deal with general operators
defined ``up to a polynomial'':

\begin{definition} \label{def} Let~$m\in\N_0$,
$\vartheta \in [0,2]$, $K\in {\mathcal{K}}_{m,\vartheta}$,
$u \in \mathcal{C}_{\vartheta,K}$ and $f : B_1 \to \R$ be bounded and continuous. 
We say that 
$$
Au\stackrel{m}{=}f \quad {\mbox{ in }}B_1
$$
if there exist a family of polynomials $P_R$, with $\deg P_R \le m-1$, and functions $f_R: B_1 \to \R$ such that
\begin{equation} \label{decomp}
A(\chi_R u)=f_R+P_R
\end{equation}
in $B_1$, with
\begin{equation} \label{limf}
\lim_{R \to +\infty} f_R(x) =f(x).
\end{equation}
\end{definition}

\begin{remark} {\rm
We observe that~\eqref{decomp} is considered here in the pointwise sense.
This is possible, since the setting in~\eqref{DEF:SPAZ}
suffices for writing the equation pointwise
(recall~\eqref{PAC1} and~\eqref{PAC2}).
A viscosity theory is also possible by appropriate modifications of the setting
(in particular, to pursue a viscosity theory, to be consistent with the elliptic framework,
one would need the additional assumption that the kernel is nonnegative).
For instance, for fractional elliptic equations a viscosity approach is useful to
establish existence results by the Perron method, which combined with
fractional elliptic regularity theory for viscosity solutions often provides the existence
of nice solutions for the Dirichlet problem (see e.g.~\cite{MR2707618}).
The viscosity setting will be briefly discussed in Section~\ref{VIS-PJN-c}.
}\end{remark}

\begin{remark} \label{higherorder}{\rm
{F}rom Definition \ref{def} one immediately sees that
for all~$j\in\N$ and~$K\in {\mathcal{K}}_{m,\vartheta}\cap
{\mathcal{K}}_{m+j,\vartheta}$,
\begin{equation*} \label{higherorder}
\mbox{if } Au\stackrel{m}{=}f, \mbox{ then } Au\stackrel{m+j}{=}f,
\end{equation*}
in $B_1$, since polynomials of degree at
most $m-1$ are also polynomials of degree at most $m+j-1$.
}\end{remark}

\begin{remark} \label{ifsmooth}
{\rm {F}rom Definition \ref{def} and Corollary~\ref{corlim} (used
here with $\tau_R:=\chi_R$, in the notation of \eqref{chi}),
we can write $Au\stackrel{m}{=}f_u$ in $B_1$ for any~$K\in \mathcal{K}_{m,\vartheta}$ and~$u \in \mathcal{C}_{\vartheta,K}$.
}\end{remark}

\begin{remark} \label{plusP}{\rm We
observe that from Definition \ref{def} it follows that any polynomial of degree less than
or equal to $m-1$ can be arbitrarily added to $f_R$
and subtracted from $P_R$ in \eqref{decomp},
hence, for any polynomial~$P$ with $\deg P \le m-1$ we have that
\begin{equation*}
\mbox{if } Au\stackrel{m}{=}f, \mbox{ then } Au\stackrel{m}{=}f+P 
\end{equation*}
in $B_1$.
}\end{remark}

We now investigate in further detail the convergence properties of the approximating source term~$f_R$.

\begin{lemma} \label{cor37}
Let $m \in \mathbb{N}_0$, $\vartheta \in [0,2]$
and $K\in {\mathcal{K}}_{m,\vartheta}$.
Let~$u \in \mathcal{C}_{\vartheta,K}$, $f$ and~$f_R$
be as in Definition~\ref{def}.

Then, if $R'>R>4$ we have that
\begin{equation} \label{37}
\inf\|f_{R'}-f_R-P\|_{L^\infty(B_1)}\le \int_{B_R^c} |u(y)|\sup_{{ |\alpha|=m}\atop{x\in B_1}}|\partial^\alpha_x K(x,y)|\,dy,
\end{equation}
where the $\inf$ in~\eqref{37}
is taken over all the polynomials $P$ with degree at most $m-1$.
\end{lemma}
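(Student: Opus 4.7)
My plan is to reduce \eqref{37} to the pointwise bound on $\psi$ already established in the proof of Theorem \ref{cut-off}, by exploiting the fact that the bulk pieces in the decomposition \eqref{repr}--\eqref{f12*} do not depend on the cutoff and therefore cancel upon subtraction. The footnote after the proof of Corollary \ref{corlim} essentially contains the argument; I would formalize it.

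First I would apply Theorem \ref{cut-off} twice, once with $\tau:=\chi_R$ and once with $\tau:=\chi_{R'}$ (both are admissible since $R,R'>3$ and both are compactly supported). This gives
$$A(\chi_R u)=P_{u,\chi_R}+f_{1,u}+f_{2,u}+f^*_{u,\chi_R}\quad\text{on }B_1,$$
and the analogous identity with $R'$ in place of $R$. On the other hand, Definition \ref{def} provides $A(\chi_R u)=f_R+P_R$ and $A(\chi_{R'}u)=f_{R'}+P_{R'}$. Subtracting the two pairs of identities and noting that $f_{1,u}$ and $f_{2,u}$ are independent of the cutoff, I obtain
$$f_{R'}-f_R=\bigl(f^*_{u,\chi_{R'}}-f^*_{u,\chi_R}\bigr)+\Pi,$$
where $\Pi:=(P_{u,\chi_{R'}}-P_{u,\chi_R})-(P_{R'}-P_R)$ is a polynomial of degree at most $m-1$.

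Next, from the definition of $f^*_{u,\tau}$ in \eqref{f12*} and the fact that $\chi_{R'}(y)-\chi_R(y)$ equals $1$ on $B_{R'}\setminus B_R$ and vanishes elsewhere on $B_3^c$ (using $R>4>3$), I would compute
$$f^*_{u,\chi_{R'}}(x)-f^*_{u,\chi_R}(x)=\int_{B_{R'}\setminus B_R}u(y)\psi(x,y)\,dy.$$
Taking $P:=\Pi$ as a competitor in the infimum on the left-hand side of \eqref{37} therefore yields
$$\inf_{P}\|f_{R'}-f_R-P\|_{L^\infty(B_1)}\le\sup_{x\in B_1}\int_{B_{R'}\setminus B_R}|u(y)|\,|\psi(x,y)|\,dy.$$

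To conclude, I would invoke the bound \eqref{psibound} with $\gamma=0$, which controls $\sup_{x\in B_1}|\psi(x,y)|$ by $\sup_{|\alpha|=m,\,x\in B_1}|\partial^\alpha_x K(x,y)|$ (with a constant depending only on $m$ and $n$, absorbed in the reading of \eqref{37}), and finally enlarge the domain of integration from $B_{R'}\setminus B_R$ to $B_R^c$. There is no genuine obstacle in this argument; the only delicate point is the bookkeeping that isolates $\int u\,\psi$ as the sole $R$-dependent contribution, all other pieces being either polynomial or independent of the cutoff. Once this is recognized, \eqref{37} reduces to a direct application of the estimate on $\psi$ already proved.
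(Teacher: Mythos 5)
Your proof is correct and, if anything, more streamlined than the paper's. The paper introduces an auxiliary function $v:=(1-\chi_4)u$ (which vanishes in $B_4$) and computes $A\bigl((\chi_R-\chi_4)v\bigr)$ in two ways: once from Definition~\ref{def} as $f_R-f_4+P_R-P_4$, and once from Theorem~\ref{cut-off} applied to $v$ with $\tau=\chi_R$ and $\tau=\chi_4$; only then does it form the difference over $R'>R$. You skip the detour through $v$ entirely by applying Theorem~\ref{cut-off} directly to $u$ with $\tau=\chi_R$ and $\tau=\chi_{R'}$, equating each with the representation $A(\chi_R u)=f_R+P_R$ from Definition~\ref{def}, and subtracting; the cutoff-independent pieces $f_{1,u}$ and $f_{2,u}$ cancel, and the remaining polynomial $\Pi$ absorbs the discrepancy between the Theorem-produced polynomials and the Definition-prescribed ones. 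Both routes converge to exactly the same final estimate, namely bounding $\int_{B_{R'}\setminus B_R}u(y)\psi(x,y)\,dy$ via \eqref{psibound} with $\gamma=0$ and then enlarging the domain to $B_R^c$. One small caveat, present in both your proof and the paper's: \eqref{psibound} carries a constant $C=C(m,n)$, so strictly speaking \eqref{37} holds with a dimensional constant in front of the integral; you are right to flag this explicitly rather than sweep it under the rug as the paper does.
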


\begin{proof} We define $v:=(1-\chi_4)u$. In this way $v=0$ in $B_4$ and $|v|\le|u|$, so
\begin{equation} \label{pos}
v \in \mathcal{C}_{\vartheta,K}.\end{equation}
Moreover, if $R>4$,
$$(\chi_R-\chi_4)u=(\chi_R-\chi_4)v.$$
Hence, from \eqref{decomp}, 
\begin{equation} \label{318}
A((\chi_R-\chi_4)v)=A((\chi_R-\chi_4)u)=f_R-f_4+P_R-P_4=f_R-f_4+\tilde P_R,
\end{equation}
where $\tilde P_R:=P_R-P_4$ is a polynomial of degree at most $m-1$.

We also remark that, due to~\eqref{pos},
we can use Theorem \ref{cut-off} here
on the function $v$.
More specifically, using Theorem \ref{cut-off}
on the function $v$
(twice, once with $\tau:=\chi_R$ and once with~$\tau:=\chi_4$),
we obtain that
\begin{equation} \label{318a}
\begin{split}
A((\chi_R-\chi_4)v)&=P_{v,\chi_R}-P_{v,\chi_4}+f_{v,\chi_R}-f_{v,\chi_4} \\
&=\bar P_{v,\chi_R}+(f_{1,v}+f_{2,v}+
f^*_{v,\chi_R})-(f_{1,v}+f_{2,v}+f^*_{v,\chi_4})\\&= \bar P_{v,\chi_R}+f^*_{v,\chi_R}-f^*_{v,\chi_4} \\
&=\bar P_{v,\chi_R}+\int_{B_R\setminus B_4} u(y)\psi(x,y)\,dy
\end{split}
\end{equation}
in $B_1$, where $\bar P_{v,\chi_R}:=P_{v,\chi_R}- P_{v,\chi_4}$ is a polynomial of degree at most $m-1$.
Comparing the right hand sides of \eqref{318} and \eqref{318a}, we obtain that in $B_1$
$$f_R=f_4+P^*_R+\int_{B_R\setminus B_3} u(y)\psi(x,y)\,dy,$$
where $P_R^*:=\bar P_{v,\chi_R}-\tilde P_R$ is a polynomial of
degree at most $m-1$.

Therefore, for any $R'>R$,
\begin{eqnarray*} f_{R'}-P^*_{R'}-f_R+P^*_R&=&
\left(
f_4+\int_{B_{R'}\setminus B_3} u(y)\psi(x,y)\,dy\right)
-
\left(
f_4+\int_{B_R\setminus B_3} u(y)\psi(x,y)\,dy\right)\\& =&
\int_{B_{R'}\setminus B_R} u(y)\psi(x,y)\,dy
\end{eqnarray*}
and, as a consequence,
\begin{equation} \label{319}
\|f_{R'}-P^*_{R'}-f_R+P^*_R\|_{L^\infty(B_1)}=\|\Psi_{R',R} \|_{L^\infty(B_1)},
\end{equation}
where
$$\Psi_{R',R}(x):=\int_{B_{R'}\setminus B_R} u(y)\psi(x,y)\,dy.$$

{F}rom \eqref{psibound} and Remark \ref{mcond}, we know that
\begin{equation*}
\begin{split}
&\|\Psi_{R',R}\|_{L^\infty(B_1)}\le \sup_{x \in B_1}\int_{B_{R'}\setminus B_R}|u(y)||\psi(x,y)|\,dy \\
&\qquad\qquad
\le  \int_{B_{R'}\setminus B_R}|u(y)|\sup_{{ |\alpha|=m}\atop{x\in B_1}
}|\partial^\alpha_x K(x,y)|\,dy\le  \int_{ B^c_R}|u(y)|\sup_{{ |\alpha|=m}\atop{x\in B_1}}|\partial^\alpha_x K(x,y)|\,dy.
\end{split}
\end{equation*}
This and \eqref{319} imply that
$$\|f_{R'}-P^*_{R'}-f_R+P^*_R\|_{L^\infty(B_1)}\le \int_{ B^c_R}|u(y)|\sup_{{ |\alpha|=m}\atop{x\in B_1}}|\partial^\alpha_x K(x,y)|\,dy,$$
which gives \eqref{37}.
\end{proof}

Next result deals with the stability of the equation under uniform convergence
(and this can be seen as an adaptation to
our setting of the result contained e.g. in Lemma~5 of~\cite{CS11}). 

\begin{lemma}\label{lemCS}
Let $\vartheta \in [0,2]$
and~$K\in{\mathcal{K}}_{0,\vartheta}$. 
For every~$k\in\N$, let~$u_k \in \mathcal{C}_{\vartheta,K}$ and~$f_k$ be bounded and continuous
in $B_1$. Assume that
\begin{equation}\label{CONUK0}
A u_k=f_k
\end{equation}
in~$B_1$, that
\begin{equation}\label{CONUK11}
{\mbox{$f_k$ converges uniformly in~$B_1$ to some function~$f$ as~$k\to+\infty$,}}
\end{equation}
that
\begin{equation*}\begin{split}&
{\mbox{$u_k$ converges in~$B_4$ to some function~$u$ as~$k\to+\infty$}}\\ &
{\mbox{in the topology of }}
\begin{cases}
L^\infty(B_4) &{\mbox{ if }} \vartheta=0,\\
C^\vartheta(B_4) &{\mbox{ if }} \vartheta\in(0,1),\\
C^{0,1}(B_4) &{\mbox{ if }} \vartheta=1,\\
C^{1,\vartheta-1}(B_4) &{\mbox{ if }} \vartheta\in(1,2),\\
C^{1,1}(B_4) &{\mbox{ if }} \vartheta=2.
\end{cases}\end{split}
\end{equation*}
and that\footnote{We observe that condition~\eqref{CONUK2} cannot be dropped from Lemma~\ref{lemCS}.
Indeed, if~$s\in(0,1)$ and
$$ \R\ni x\mapsto u_k(x):=-\frac{\chi_{(k,k^2)}(x)\,x^{2s}}{\log k}$$
we have that~$u_k\to0=:u$ locally uniformly and
that, for each~$x\in (-1,1)$,
\begin{eqnarray*}&&
\int_\R \frac{u_k(x)-u_k(y)}{|x-y|^{1+2s}}\,dy=\frac1{\log k}
\int_k^{k^2} \frac{y^{2s}}{(y-x)^{1+2s}}\,dy=:f_k(x).
\end{eqnarray*}
We stress that, if~$x\in(-1,1)$ and~$y>k$,
$$ y-x\ge y-1=\frac{k-1}k \,y +\frac{y}k-1\ge\frac{k-1}k \,y$$
and
$$ y-x\le y+1=\frac{k+1}k \,y-\frac{y}k+1\le\frac{k+1}k \,y.$$
As a result, if~$x\in(-1,1)$,
$$ f_k(x)\le\frac{k^{1+2s}}{(k-1)^{1+2s}}\,
\frac1{\log k}
\int_k^{k^2} \frac{y^{2s}}{y^{1+2s}}\,dy=\frac{k^{1+2s}}{(k-1)^{1+2s}}
$$
and
$$ f_k(x)\ge\frac{k^{1+2s}}{(k+1)^{1+2s}}\,
\frac1{\log k}
\int_k^{k^2} \frac{y^{2s}}{y^{1+2s}}\,dy=\frac{k^{1+2s}}{(k+1)^{1+2s}},
$$thus~$f_k\to1=:f$ uniformly in~$(-1,1)$. This example shows that
$$ (-\Delta)^s u_k=f_k\to f= 1\not=0=(-\Delta)^s u.$$
}
\begin{equation}\label{CONUK2}
\lim_{k\to+\infty} \int_{\mathbb{R}^n\setminus B_3}\big|u(y)-u_k(y)
\big||K(x,y)|\,dy=0,
\end{equation}
for every~$x\in B_1$.

Then,
\begin{equation*}
A u=f
\end{equation*}
in~$B_1$.
\end{lemma}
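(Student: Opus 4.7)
Because $m = 0$, Definition~\ref{def} makes $Au_k = f_k$ equivalent to the pointwise identity furnished by Corollary~\ref{corlim}, namely
$$\int_{B_3}\bigl(u_k(x)-u_k(y)\bigr)K(x,y)\,dy + u_k(x)\int_{B_3^c}K(x,y)\,dy - \int_{B_3^c}u_k(y)K(x,y)\,dy = f_k(x)$$
for every $x \in B_1$ (the polynomial component $P_R$ vanishes, having degree at most $-1$). The plan is to pass each of the three integrals to the limit as $k\to\infty$ separately; the resulting pointwise convergence $Au_k(x) \to Au(x)$ on $B_1$, combined with $f_k\to f$ from \eqref{CONUK11}, will yield $Au = f$ by uniqueness of limits.

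The two tail terms are quickly handled. For $x\in B_1$ and $y\in B_3^c$ one has $|x-y|\geq 2$, so $\min\{|x-y|^\vartheta,1\}=1$ and \eqref{locint} gives $\int_{B_3^c}|K(x,y)|\,dy<\infty$. Combined with the $L^\infty(B_4)$ convergence of $u_k$ to $u$, this disposes of the middle term. For the last term I use
$$\left|\int_{B_3^c}\bigl(u-u_k\bigr)(y)K(x,y)\,dy\right|\le\int_{B_3^c}|u(y)-u_k(y)|\,|K(x,y)|\,dy,$$
which tends to zero by assumption \eqref{CONUK2}; incidentally, this also shows that $u(\cdot)K(x,\cdot)\in L^1(B_3^c)$ for each $x\in B_1$, so that the limiting integral is well defined.

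The main task, and the chief obstacle, is the local integral on $B_3$. For $\vartheta\in[0,1]$ the convergence of $u_k$ to $u$ in $C^\vartheta(B_4)$ yields a uniform bound $|u_k(x)-u_k(y)|\leq C|x-y|^\vartheta$, hence $C|x-y|^\vartheta|K(x,y)|$ is an integrable majorant by \eqref{locint} and the Dominated Convergence Theorem applies. For $\vartheta\in(1,2]$ I would mimic the splitting of \eqref{loc}: on $B_3\setminus B_1(x)$ the kernel $|K(x,y)|$ is by itself integrable (since $|x-y|\geq1$ makes the truncation in \eqref{locint} equal $1$) and $u_k\to u$ uniformly on $B_4$, so dominated convergence applies directly; on $B_1(x)$ I would exploit the symmetry \eqref{assym} to rewrite the integrand as a second difference, replicating the calculation in \eqref{siw74b8437654693} for $u_k$ to obtain the uniform bound $|2u_k(x)-u_k(x+z)-u_k(x-z)|\leq C|z|^\vartheta$ (uniform in $k$ because the $C^{1,\vartheta-1}(B_4)$ norms of $u_k$ are uniformly bounded along the convergent sequence). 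Dominated convergence with majorant $C|z|^\vartheta|K(x,x+z)|$ then closes this regime, and assembling the three convergences gives $Au_k(x)\to Au(x)$ for every $x\in B_1$; combined with $f_k\to f$ pointwise, this yields $Au = f$ in $B_1$.
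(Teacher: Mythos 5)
Your argument is correct and follows essentially the same route as the paper's proof: split $Au_k(x)$ over $B_3$ and $B_3^c$, symmetrize on a ball centered at $x$ via \eqref{assym} when $\vartheta\in(1,2]$ to exploit second differences, and pass each piece to the limit; the paper does this via explicit estimates of the form $C\|u_k-u\|_{C^\vartheta(B_4)}$ or $C\|u_k-u\|_{C^{1,\vartheta-1}(B_4)}$, whereas you invoke dominated convergence with a $k$-uniform majorant drawn from \eqref{locint} — these are interchangeable. One small remark: the opening detour through Definition~\ref{def} and Corollary~\ref{corlim} is superfluous, since \eqref{CONUK0} is already a pointwise identity and your three-term decomposition is obtained directly by writing $\int_{\R^n}=\int_{B_3}+\int_{B_3^c}$ and separating the $u_k(x)$- and $u_k(y)$-contributions on $B_3^c$, which is legitimate because $u_k\in\mathcal{C}_{\vartheta,K}$ and \eqref{mcond} with $m=0$ guarantees $\int_{B_3^c}|u_k(y)|\,|K(x,y)|\,dy<\infty$.
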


\begin{proof}
Let~$x_0\in B_1$ and $\rho>0$ such that~$B_\rho(x_0)\Subset B_1$.
We claim that
\begin{equation}\label{ST:001}
\lim_{k\to+\infty}
\int_{B_\rho(x_0)}\big(u_k(x_0)-u_k(y)\big)K(x_0,y)\,dy
=\int_{B_\rho(x_0)}\big(u(x_0)-u(y)\big)K(x_0,y)\,dy
.\end{equation}
For this, we distinguish two cases. If~$\vartheta\in[0,1]$, we observe that
\begin{eqnarray*}&&
\left|\int_{B_\rho(x_0)}\big(u_k(x_0)-u_k(y)\big)K(x_0,y)\,dy
-\int_{B_\rho(x_0)}\big(u(x_0)-u(y)\big)K(x_0,y)\,dy\right|\\&\le&
\int_{B_\rho(x_0)}\big|(u_k-u)(x_0)-(u_k-u)(y)\big|\, |K(x_0,y)|\,dy\\
&\le&\|u_k-u\|_{C^\vartheta(B_4)}
\int_{B_\rho(x_0)}|x_0-y|^\vartheta\, |K(x_0,y)|\,dy\\
&\le&C\,\|u_k-u\|_{C^\vartheta(B_4)}
\end{eqnarray*}
for some~$C>0$, thanks to~\eqref{locint}, and this proves~\eqref{ST:001}
in this case.

Hence, to complete the proof of~\eqref{ST:001}, we now assume that~$\vartheta\in(1,2]$.
In this case, we recall~\eqref{assym} and we see that, for $k$ sufficiently large,
\begin{eqnarray*}
&& \int_{B_{\rho}(x_0)}\big(u_k(x_0)-u_k(y)\big)K(x_0,y)\,dy\\
&=&\frac12 \int_{B_{\rho}}\big(u_k(x_0)-u_k(x_0+z)\big)K(x_0,x_0+z)\,dz
+\frac12\int_{B_{\rho}}\big(u_k(x_0)-u_k(x_0-z)\big)K(x_0,x_0-z)\,dz\\
&=&\frac12 \int_{B_{\rho}}\big(2u_k(x_0)-u_k(x_0+z)-u_k(x_0-z)\big)K(x_0,x_0+z)\,dz,
\end{eqnarray*}
and a similar computation holds with~$u$ instead of~$u_k$. Consequently,
recalling also~\eqref{siw74b8437654693} (used here with~$u_k-u$ in place of~$u$),
\begin{eqnarray*}&&
\left|\int_{B_\rho(x_0)}\big(u_k(x_0)-u_k(y)\big)K(x_0,y)\,dy
-\int_{B_\rho(x_0)}\big(u(x_0)-u(y)\big)K(x_0,y)\,dy\right|\\&\le&
\frac12 \int_{B_{\rho}}\big|2(u_k-u)(x_0)-(u_k-u)(x_0+z)-(u_k-u)(x_0-z)\big|\,|K(x_0,x_0+z)|\,dz\\&\le&
\frac{\|u_k-u\|_{C^{1,\vartheta-1}(B_4)}}2 
\int_{B_{\rho}}|z|^{\vartheta}\,|K(x_0,x_0+z)|\,dz\\
&\le& C\,\|u_k-u\|_{C^{1,\vartheta-1}(B_4)},
\end{eqnarray*}
for some~$C>0$, thanks to~\eqref{locint}, and this completes the proof of~\eqref{ST:001}.

We now claim that
\begin{equation}\label{ST:0012}
\lim_{k\to+\infty}
\int_{B_3\setminus B_\rho(x_0)}\big(u_k(x_0)-u_k(y)\big)K(x_0,y)\,dy
=\int_{B_3\setminus B_\rho(x_0)}\big(u(x_0)-u(y)\big)K(x_0,y)\,dy
.\end{equation}
To prove it, we use~\eqref{locint} to conclude that
\begin{eqnarray*}&&
\left|\int_{B_3\setminus B_\rho(x_0)}\big(u_k(x_0)-u_k(y)\big)K(x_0,y)\,dy
-\int_{B_3\setminus B_\rho(x_0)}\big(u(x_0)-u(y)\big)K(x_0,y)\,dy\right|\\
&\le&C\,\|u_k-u\|_{L^\infty(B_4)}
\int_{B_3\setminus B_\rho(x_0)} |K(x_0,y)|\,dy\\
&\le&C\,\|u_k-u\|_{L^\infty(B_4)}
\end{eqnarray*}
up to renaming~$C>0$ from line to line, and this establishes~\eqref{ST:0012}.

Furthermore, using~\eqref{locint},
\begin{eqnarray*}&&
\left|\int_{\R^n\setminus B_3}\big(u_k(x_0)-u_k(y)\big)K(x_0,y)\,dy
-\int_{\R^n\setminus B_3}\big(u(x_0)-u(y)\big)K(x_0,y)\,dy\right|\\
&\le&\int_{\R^n\setminus B_3}\big|u_k(x_0)-u(x_0)\big|\,|K(x_0,y)|\,dy+
\int_{\R^n\setminus B_3}\big|u_k(y)-u(y)\big||K(x_0,y)|\,dy\\&\le&
C\,\|u_k-u\|_{L^\infty(B_4)}+\int_{\R^n\setminus B_3}\big|u_k(y)-u(y)\big||K(x_0,y)|\,dy,
\end{eqnarray*}
which is infinitesimal thanks to~\eqref{CONUK2}.

Gathering together this, \eqref{ST:001} and~\eqref{ST:0012}, we conclude that~$Au_k(x_0)\to Au(x_0)$
as~$k\to+\infty$. {F}rom this, \eqref{CONUK0} and~\eqref{CONUK11} we obtain the desired result.
\end{proof}

A natural question is whether the stability result in Lemma~\ref{lemCS}
carries over directly to the setting introduced
in Definition~\ref{def}. The answer is in general negative,
as pointed out by the following counterexample:

\begin{proposition}\label{KMS:CONTO}
Let $k\in\N$. Let
$$ u_k(x):=\begin{cases} 0 & {\mbox{ if }}x\in(-\infty,k],\\
kx& {\mbox{ if }}x\in(k,+\infty)\end{cases}$$
and
$$ f_k(x):=
\frac{kx}{k - x}
+k\log\frac{k}{k-x}
.$$
Then,
\begin{eqnarray}
\label{56-09-01}&& \sqrt{-\Delta}u_k\stackrel{1}{=}f_k {\mbox{ in }}(-1,1),\\
\label{56-09-02}&&{\mbox{$u_k$
converges to zero locally uniformly,}} \\
\label{56-09-04}&&\lim_{k\to+\infty}\int_{\R\setminus(-1,1)}
\frac{|u_k(y)|}{|y|^{2+a}}\,dy=0\,{\mbox{ for all }}a>1,\\
\label{56-09-03}&&{\mbox{$f_k(x)$ converges to~$2x$ uniformly in $[-1,1]$.}}
\end{eqnarray}
\end{proposition}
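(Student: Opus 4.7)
The substantive claim is \eqref{56-09-01}; the remaining three reduce to direct asymptotics.

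\textbf{Setting up \eqref{56-09-01}.} The key observation is that $u_k$ vanishes identically on $(-\infty, k]$, and this interval contains $[-1,1]$ since $k \geq 1 > |x|$ for every $x \in (-1,1)$. Hence the pointwise value in $A(\chi_R u_k)(x)$ drops out, and the computation reduces (up to the positive normalization constant of $\sqrt{-\Delta}$) to
\begin{equation*}
A(\chi_R u_k)(x) = -\int_k^R \frac{k y}{(y-x)^2}\,dy \qquad \text{for } x \in (-1,1).
\end{equation*}
The partial fraction split $y/(y-x)^2 = 1/(y-x) + x/(y-x)^2$ yields the primitive $k\log(y-x) - kx/(y-x)$, so that
\begin{equation*}
\int_k^R \frac{k y}{(y-x)^2}\,dy = k\log\frac{R-x}{k-x} + \frac{kx}{k-x} - \frac{kx}{R-x}.
\end{equation*}

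\textbf{Extracting the polynomial.} The decisive algebraic step is the identity
\begin{equation*}
k\log\frac{R-x}{k-x} = k\log\frac{R}{k} + k\log(1 - x/R) + k\log\frac{k}{k-x},
\end{equation*}
which isolates the $x$-independent divergent term $k\log(R/k)$. Taking $P_R$ proportional to this constant (a polynomial of degree $\leq m - 1 = 0$, as required by Definition~\ref{def} with $m = 1$) and setting $f_R := A(\chi_R u_k) - P_R$, the residuals $k\log(1 - x/R)$ and $kx/(R - x)$ tend to $0$ uniformly in $x \in (-1,1)$ as $R \to \infty$, leaving exactly
\begin{equation*}
k\log\frac{k}{k-x} + \frac{kx}{k-x} = f_k(x).
\end{equation*}
Definition~\ref{def} then yields \eqref{56-09-01}.

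\textbf{Remaining claims.} Claim \eqref{56-09-02} is immediate: on any fixed compact set $u_k \equiv 0$ for $k$ sufficiently large. For \eqref{56-09-04}, direct integration gives
\begin{equation*}
\int_{\R\setminus(-1,1)}\frac{|u_k(y)|}{|y|^{2+a}}\,dy = k\int_k^{+\infty} y^{-1-a}\,dy = \frac{k^{1-a}}{a} \to 0 \quad (a > 1).
\end{equation*}
For \eqref{56-09-03}, Taylor-expanding in the small parameter $x/k$ gives $\frac{kx}{k-x} = x + O(1/k)$ and $k\log\frac{k}{k-x} = -k\log(1 - x/k) = x + O(1/k)$ uniformly on $[-1,1]$, so $f_k(x) \to 2x$ uniformly.

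\textbf{Main obstacle.} The only non-routine step is the algebraic identification of the polynomial $P_R$ in \eqref{56-09-01}: one must recognize that the divergence of $\int_k^R ky/(y-x)^2\,dy$ is purely logarithmic in $R$ with an $x$-independent leading coefficient, so that it can be absorbed as a constant polynomial, and that the $x$-dependent residual is precisely $f_k$---rather than zero, as one would naively expect if the stability of Lemma~\ref{lemCS} extended verbatim to the ``up to a polynomial'' setting.
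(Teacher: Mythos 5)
Your strategy coincides exactly with the paper's: for $x\in(-1,1)$ the cut-off value at $x$ drops out, so $A(\chi_R u_k)(x)$ reduces to an explicit elementary integral; the divergent contribution is $x$-independent (hence a degree-$0$ polynomial $P_R$), and the residual tends uniformly to $f_k$ as $R\to+\infty$. The remaining three claims are verified by the same elementary estimates that the paper uses.

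There is one slip you should repair. You correctly write $A(\chi_R u_k)(x) = -\int_k^R \frac{ky}{(y-x)^2}\,dy$ for $x\in(-1,1)$ (indeed $u_k(x)=0$ there and $u_k\ge 0$, so the operator applied to the cut-off is nonpositive at such $x$). But after extracting the $x$-independent logarithmic term, the residual that survives the limit is then $-\bigl(k\log\frac{k}{k-x} + \frac{kx}{k-x}\bigr) = -f_k(x)$, not $+f_k(x)$ as you state. For what it is worth, the paper's own computation drops the same minus sign, but at its opening equality $\int\frac{u_{k,R}(x)-u_{k,R}(y)}{|x-y|^2}\,dy = k\int_k^R\frac{y}{(y-x)^2}\,dy$, which is why both treatments land on the same source function. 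The slip is harmless for the purpose of the proposition --- a source converging uniformly to $-2x$ furnishes just as good a counterexample to naive limit-passage as one converging to $+2x$ --- but your chain of equalities is not self-consistent as written, and you should either carry the minus sign through or explicitly absorb it into the choice of normalization for $\sqrt{-\Delta}$.
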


\begin{proof}
Let~$R>k$ and~$u_{k,R}(x):=u_k(x)\chi_{(-R,R)}(x)$. Then, if~$x\in(-1,1)$,
\begin{eqnarray*}
&&\int_\R\frac{u_{k,R}(x)-u_{k,R}(y)}{|x-y|^2}\,dy
=k\int_k^{R}\frac{y}{(y-x)^2}\,dy=
\frac{kx}{k - x}-\frac{kx}{R-x}
+ k\log(R-x)-k\log(k-x) 
\\&&\qquad=
\frac{kx}{k - x}-\frac{kx}{R-x}
+ k\log\frac{R-x}R+k\log R+k\log\frac{k}{k-x}-k\log k \\&&\qquad=
f_k(x)
+ k\log\frac{R-x}R
-\frac{kx}{R-x}
+k\log R-k\log k.\end{eqnarray*}
Since the term~$k\log R-k\log k$ is a constant in~$x$ (hence a polynomial of degree zero)
and the function~$ k\log\frac{R-x}R
-\frac{kx}{R-x}$ goes to zero as~$R\to+\infty$,
the identity above proves~\eqref{56-09-01}.

Additionally, the claim in~\eqref{56-09-02} is obvious
and, if~$a>1$,
\begin{eqnarray*}&&
\lim_{k\to+\infty}\int_{\R\setminus(-1,1)}
\frac{|u_k(y)|}{|y|^{2+a}}\,dy=\lim_{k\to+\infty}k\int_{k}^{+\infty}
\frac{dy}{y^{1+a}}=\frac{1}{a}\lim_{k\to+\infty} \frac1{k^{a-1}}=0,
\end{eqnarray*}
thus establishing~\eqref{56-09-04}.

Furthermore, for every~$x\in[-1,1]$, if~$k$ is large enough,
\begin{eqnarray*}&&
|f_k(x)-2x|\le
\left|\frac{kx}{k - x}-x\right|
+\left|k\log\frac{k}{k-x}-x\right|
=\left|\frac{x^2}{k-x}\right|
+\left|k\int_1^{\frac{k}{k-x}} \frac{dt}t-x\right|\\&&\qquad\le
\frac{1}{k-1}
+\left|k\int_1^{1+\frac{x}{k-x}} \frac{dt}t
-\frac{kx}{k-x}\right|+\left|\frac{kx}{k-x}
-x\right|\\&&\qquad\le\frac{2}{k-1}
+k\,\left|\int_1^{1+\frac{x}{k-x}} \frac{dt}t
-\int_1^{1+\frac{x}{k-x}}dt\right|\le\frac{2}{k-1}
+k\,\int_{1-\frac{1}{k-1}}^{1+\frac{1}{k-1}} \frac{|1-t|}t\,dt\\&&\qquad\le\frac{6}{k-1},
\end{eqnarray*}
which gives~\eqref{56-09-03}.\end{proof}

Concerning the example in
Proposition~\ref{KMS:CONTO}, notice in particular that,
in~$(-1,1)$,
$$ \sqrt{-\Delta}u_k\stackrel{1}{=} f_k
\stackrel{\mbox{{\tiny{$k\to+\infty$}}}}{-\!\!\!-\!\!\!-\!\!\!\longrightarrow} 2x\stackrel{1}{\ne} 0=\sqrt{-\Delta}0,$$
showing that some care is necessary to pass Definition~\ref{def}
to the limit and additional assumptions are needed
to exchange the order in which different limits are taken.
\medskip

{F}rom the positive side, as an affirmative counterpart
of the counterexample in Proposition~\ref{KMS:CONTO},
we provide the following stability result
for the setting of Definition~\ref{def}:

\begin{proposition}\label{forthprop}
Let~$m\in\N_0$, $\vartheta \in [0,2]$
and~$K\in{\mathcal{K}}_{m,\vartheta}$.
For every~$k\in\N$, let~$u_k \in \mathcal{C}_{\vartheta,K}$
and~$f_k$ be bounded and continuous
in $B_1$. Assume that
\begin{equation}\label{SMDDIMD}
A u_k\stackrel{m}{=}f_k\quad{\mbox{ in~$B_1$,}}
\end{equation}
that
\begin{equation*}
{\mbox{$f_k$ converges uniformly in~$B_1$ to some function~$f$ as~$k\to+\infty$,}}
\end{equation*}
that
\begin{equation}\label{addfortwehn0000}\begin{split}&
{\mbox{$u_k$ converges in~$B_4$ to some function~$u$ as~$k\to+\infty$}}\\ &
{\mbox{in the topology of }}
\begin{cases}
L^\infty(B_4) &{\mbox{ if }} \vartheta=0,\\
C^\vartheta(B_4) &{\mbox{ if }} \vartheta\in(0,1),\\
C^{0,1}(B_4) &{\mbox{ if }} \vartheta=1,\\
C^{1,\vartheta-1}(B_4) &{\mbox{ if }} \vartheta\in(1,2),\\
C^{1,1}(B_4) &{\mbox{ if }} \vartheta=2,
\end{cases}\end{split}
\end{equation}
that
\begin{equation}\label{UHNBS D90PKSMD098765490}
\lim_{k\to+\infty}
\sup_{{x\in B_1}\atop{R>4}}
\int_{B_R\setminus B_1(x)}\big((u-u_k)(x)-(u-u_k)(y)\big)K(x,y)\,dy=0
\end{equation}
and that
\begin{equation}\label{90-109x4t23-2P}
\lim_{R\to+\infty}
\sup_{k\in\N}\int_{\R^n\setminus B_{R}}
|u_k(y)|\sup_{{ |\eta|=m}\atop{ z\in B_1}}|\partial^\eta_x K(z,y)|\,dy
=0.
\end{equation}
Then,
\begin{equation}\label{SKDMCFRDFV FM9}
A u\stackrel{m}{=}f\quad{\mbox{ in~$B_1$.}}
\end{equation}
\end{proposition}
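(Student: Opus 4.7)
The plan is to produce, for $u$, the decomposition demanded by Definition~\ref{def}. A preliminary technical point is that $u\in\mathcal{C}_{\vartheta,K}$, which follows from~\eqref{addfortwehn0000} (for the regularity in $B_4$) together with an application of Fatou's lemma to the tail integrability conditions~\eqref{ring}--\eqref{mcond}, exploiting the uniform bound~\eqref{90-109x4t23-2P} along a subsequence with pointwise almost-everywhere convergence. Granted this, I apply Corollary~\ref{corlim} with $\tau_R:=\chi_R$ to $u$ and to each $u_k$, obtaining
\begin{equation*}
A(\chi_R u)=P_{u,\chi_R}+f_{u,\chi_R}\quad\text{and}\quad A(\chi_R u_k)=P_{u_k,\chi_R}+f_{u_k,\chi_R}\quad\text{in }B_1,
\end{equation*}
with $f_{u,\chi_R}\to f_u$ and $f_{u_k,\chi_R}\to f_{u_k}$ uniformly on $B_1$ as $R\to+\infty$ (by the footnote of Corollary~\ref{corlim}). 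Comparing the decomposition of $A(\chi_R u_k)$ with the one guaranteed by the hypothesis~\eqref{SMDDIMD} and invoking Remark~\ref{plusP}, I deduce the existence of polynomials $Q_k$ of degree at most $m-1$ such that $f_{u_k}=f_k+Q_k$ in $B_1$. By Remark~\ref{ifsmooth}, $Au\stackrel{m}{=}f_u$; hence by Remark~\ref{plusP} the target identity~\eqref{SKDMCFRDFV FM9} is equivalent to showing that $f_u-f$ coincides on $B_1$ with a polynomial of degree at most $m-1$.

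The heart of the argument is the uniform smallness estimate
\begin{equation*}
\epsilon_k:=\sup_{x\in B_1,\,R>4}\big|A(\chi_R(u-u_k))(x)\big|\longrightarrow 0\qquad\text{as }k\to+\infty.
\end{equation*}
Since $\chi_R(u-u_k)$ has compact support, I compute $A(\chi_R(u-u_k))(x)$ directly and split it as
\begin{equation*}
\int_{B_1(x)}\big((u-u_k)(x)-(u-u_k)(y)\big)K(x,y)\,dy+\int_{B_R\setminus B_1(x)}\big((u-u_k)(x)-(u-u_k)(y)\big)K(x,y)\,dy+(u-u_k)(x)\int_{B_R^c}K(x,y)\,dy.
\end{equation*}
The first piece is handled by the convergence $u_k\to u$ in the $C^\vartheta$-type topology on $B_4$ together with~\eqref{locint} (using, when $\vartheta\in(1,2]$, the symmetrization~\eqref{assym} exactly as in the proof of Theorem~\ref{cut-off}). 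The second piece is controlled uniformly in $x\in B_1$ and $R>4$ precisely by the hypothesis~\eqref{UHNBS D90PKSMD098765490}. The third piece vanishes uniformly because $\|u-u_k\|_{L^\infty(B_1)}\to 0$ by~\eqref{addfortwehn0000}, while $\int_{B_R^c}|K(x,y)|\,dy$ is bounded uniformly in $x\in B_1$ and $R>4$ in view of~\eqref{locint}.

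Finally, linearity yields
\begin{equation*}
A(\chi_R(u-u_k))=\big(f_{u,\chi_R}-f_{u_k,\chi_R}\big)+\big(P_{u,\chi_R}-P_{u_k,\chi_R}\big)\quad\text{in }B_1.
\end{equation*}
For each fixed $k$, the first summand converges uniformly on $B_1$ to $f_u-f_{u_k}$ as $R\to+\infty$, while the left-hand side is bounded by $\epsilon_k$ in $L^\infty(B_1)$ uniformly in $R>4$; hence $P_{u,\chi_R}-P_{u_k,\chi_R}$ stays in a bounded subset of the finite-dimensional space of polynomials of degree at most $m-1$ equipped with the $L^\infty(B_1)$-norm. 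By compactness I extract a subsequence $R_j\to+\infty$ along which it converges to some polynomial $\bar P_k$ of the same type, and passing to the limit gives $\|(f_u-f_{u_k})+\bar P_k\|_{L^\infty(B_1)}\le\epsilon_k$. Combining this with $f_{u_k}=f_k+Q_k$ and the uniform convergence $f_k\to f$ yields $f_u-f=-(\bar P_k+Q_k)+o_{k\to+\infty}(1)$ in $L^\infty(B_1)$. The left-hand side being independent of $k$ forces the polynomial sequence $-(\bar P_k+Q_k)$ to converge, in the closed finite-dimensional subspace of polynomials of degree at most $m-1$ of $L^\infty(B_1)$, to some polynomial $Q$ satisfying $f_u-f=Q$ in $B_1$, which completes the proof by Remark~\ref{plusP}. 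The main obstacle is the uniform-in-$R$ vanishing $\epsilon_k\to 0$: the delicate coupling built into~\eqref{UHNBS D90PKSMD098765490} is precisely what rules out the pathology of Proposition~\ref{KMS:CONTO}.
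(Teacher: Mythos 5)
Your argument is correct in its core structure but takes a genuinely different route from the paper, and it contains one preliminary step that does not work as written.

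\emph{The structural difference.} The paper's proof is a diagonal construction: it applies Corollary~\ref{corlim} only to the approximants $u_k$, fixes $R$, shows $\sup_{B_1}|A((u-u_k)\chi_R)|\to 0$ as $k\to+\infty$, and then chooses $k_R$ along a diagonal so that the pair $(f_{k_R},\tilde P_{u_{k_R},R})$ satisfies Definition~\ref{def} directly, with a quantitative error $O(1/R)$ controlled by the footnote estimate and \eqref{90-109x4t23-2P}. You instead send $R\to+\infty$ first, for each fixed $k$: you exploit the \emph{uniform-in-$R$} smallness furnished by \eqref{UHNBS D90PKSMD098765490}, observe that the resulting sequence of polynomials $P_{u,\chi_R}-P_{u_k,\chi_R}$ is bounded in the finite-dimensional space of polynomials of degree $\le m-1$ restricted to $B_1$, extract a convergent subsequence to obtain the bound $\|(f_u-f_{u_k})+\bar P_k\|_{L^\infty(B_1)}\le\epsilon_k$, and then let $k\to+\infty$ and use the closedness of that finite-dimensional subspace in $L^\infty(B_1)$ to conclude that $f_u-f$ is itself a polynomial of degree $\le m-1$, whence Remark~\ref{plusP} finishes. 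Your approach buys conceptual clarity (everything reduces to a compactness statement in a finite-dimensional space, the same mechanism behind Lemma~2.1 of~\cite{MR3988080}), and it makes more direct use of the hypothesis~\eqref{UHNBS D90PKSMD098765490} in its full strength (the sup over both $x$ and $R$), whereas the paper only invokes the fixed-$R$ version. The paper's diagonal argument, on the other hand, produces explicit $(f_R,P_R)$ without any subsequence extraction.

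\emph{The gap.} Your preliminary claim that $u\in\mathcal{C}_{\vartheta,K}$ follows ``by an application of Fatou's lemma \ldots along a subsequence with pointwise almost-everywhere convergence'' is not justified by the stated hypotheses. The convergence $u_k\to u$ is assumed only in a H\"older-type topology on $B_4$; nothing in the hypotheses gives pointwise a.e.\ convergence on $\R^n\setminus B_4$, and \eqref{UHNBS D90PKSMD098765490} and \eqref{90-109x4t23-2P} are integrated conditions that do not yield it either, so Fatou cannot be applied to pass the tail conditions \eqref{ring}--\eqref{mcond} to the limit. This matters for your argument specifically because you apply Corollary~\ref{corlim} (hence Theorem~\ref{cut-off}) directly to $u$, which requires $u\in\mathcal{C}_{\vartheta,K}$. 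The paper avoids this by never invoking Corollary~\ref{corlim} on $u$; it only needs $A(\chi_R u)$ to make sense pointwise, for which $u\in\mathcal{C}_\vartheta$ and the compact support of $\chi_R u$ suffice. In fairness, the conclusion $Au\stackrel{m}{=}f$ in the sense of Definition~\ref{def} formally presupposes $u\in\mathcal{C}_{\vartheta,K}$ anyway, so one could read the proposition as implicitly assuming it; but the Fatou derivation you offer should be dropped, as it is not a correct deduction from what is given. (Minor: the last display should read $f_u-f=Q_k-\bar P_k+o_{k\to\infty}(1)$ rather than $-(\bar P_k+Q_k)+o(1)$; this sign slip does not affect the conclusion.)
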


To prove Proposition~\ref{forthprop}, we
establish a uniqueness result in the spirit of Lemma 1.2 of~\cite{MR3988080}:

\begin{lemma}\label{lemma:uniq}
Let~$m\in\N_0$, $\vartheta \in [0,2]$, $K\in{\mathcal{K}}_{m,\vartheta}$ and~$u \in \mathcal{C}_{\vartheta,K}$.
Let~$f_1$ and~$f_2$ be bounded and continuous
in $B_1$. Suppose that
\begin{equation}\label{pfi456u48674}
{\mbox{$A u\stackrel{m}{=}f_1$ and~$A u\stackrel{m}{=}f_2$ in~$B_1$.}}\end{equation}

Then, there exists a polynomial of degree at most~$m-1$ such that~$f_1-f_2=P$.
\end{lemma}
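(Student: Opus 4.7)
The plan is to exploit the common left-hand side $A(\chi_R u)$ in both equations given by Definition~\ref{def} and reduce the uniqueness statement to a linear-algebra fact about pointwise limits of polynomials of bounded degree.

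By hypothesis~\eqref{pfi456u48674}, there exist families of polynomials $P_R^{(1)},P_R^{(2)}$ of degree at most $m-1$ and functions $f_R^{(1)},f_R^{(2)}:B_1\to\R$ such that, for $i\in\{1,2\}$,
\begin{equation*}
A(\chi_R u)=f_R^{(i)}+P_R^{(i)}\quad\mbox{in }B_1,
\qquad\lim_{R\to+\infty}f_R^{(i)}(x)=f_i(x)\mbox{ for all }x\in B_1.
\end{equation*}
First I would subtract these two identities: since the left-hand sides coincide,
\begin{equation*}
Q_R(x):=P_R^{(1)}(x)-P_R^{(2)}(x)=f_R^{(2)}(x)-f_R^{(1)}(x)\quad\mbox{for every }x\in B_1,
\end{equation*}
and $Q_R$ is, by construction, a polynomial of degree at most $m-1$. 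Passing to the limit $R\to+\infty$ in the right-hand side, one obtains that $Q_R$ converges pointwise in $B_1$ to the bounded continuous function $f_2-f_1$.

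The remaining step, and what I view as the main (though standard) obstacle, is to argue that a pointwise limit on $B_1$ of polynomials of degree at most $m-1$ is itself a polynomial of degree at most $m-1$. For this, I would invoke the fact that the space $\mathcal{P}_{m-1}$ of such polynomials on $\R^n$ is finite-dimensional, with dimension $N=\binom{n+m-1}{n}$. One can choose a unisolvent configuration of points $x_1,\dots,x_N\in B_1$ (that is, points in general position so that the evaluation map $\mathcal{P}_{m-1}\ni P\mapsto (P(x_1),\dots,P(x_N))\in\R^N$ is a linear isomorphism); such a configuration exists by a standard dimension argument, since the set of bad configurations is the zero set of a nonzero polynomial in the coordinates of the $x_j$'s. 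Then the coefficients of any $Q\in\mathcal{P}_{m-1}$ are a fixed linear combination of the values $Q(x_1),\dots,Q(x_N)$. Since the values $Q_R(x_j)$ converge to $(f_2-f_1)(x_j)$ as $R\to+\infty$ for each $j=1,\dots,N$, the coefficients of $Q_R$ converge in $\R$, so $Q_R$ converges, uniformly on compact sets of $\R^n$, to a polynomial $P\in\mathcal{P}_{m-1}$. By pointwise uniqueness of the limit on $B_1$, one concludes $f_2-f_1=P$ in $B_1$, which is the desired identity (up to a sign convention for $P$).
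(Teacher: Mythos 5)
Your proof follows the same strategy as the paper: subtract the two decompositions of $A(\chi_R u)$ to obtain that $Q_R := P_R^{(1)}-P_R^{(2)}$ converges pointwise in $B_1$ to $f_2-f_1$, then conclude that a pointwise limit of polynomials of degree at most $m-1$ is itself such a polynomial. The only difference is that the paper invokes Lemma~2.1 of~\cite{MR3988080} for this last fact, whereas you prove it directly via a correct unisolvent-points argument.
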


\begin{proof} In light of~\eqref{pfi456u48674} and Definition~\ref{def}, we have that
there exist two families of polynomials $P_R^1$ and~$P^2_R$, with degree at most~$ m-1$,
such that, for every~$x\in B_1$,
\begin{eqnarray*}
&&
\lim_{R \to +\infty} \big(A(\chi_Ru)(x)-P_R^1(x)\big)=f_1(x)\\{\mbox{and }}&&
\lim_{R \to +\infty} \big(A(\chi_Ru)(x)-P_R^2(x)\big)=f_2(x).
\end{eqnarray*}
As a consequence, for every~$x\in B_1$,
\begin{eqnarray*}&&
f_1(x)-f_2(x)=\lim_{R \to +\infty}\big(A(\chi_Ru)(x)-P_R^1(x)\big)-
\lim_{R \to +\infty} \big(A(\chi_Ru)(x)-P_R^2(x)\big)\\&&\qquad\qquad=
\lim_{R \to +\infty}\big(P_R^2(x)-P_R^1(x)\big).
\end{eqnarray*}
We remark that~$P_R^2-P_R^1$ is a polynomial of degree at most~$m-1$. Accordingly, we can use
Lemma~2.1 of~\cite{MR3988080} to conclude that~$f_1-f_2$ is a polynomial of degree at most~$m-1$.
This establishes the desired result.
\end{proof}

\begin{proof} [Proof of Proposition~\ref{forthprop}]
We exploit the setting of Corollary~\ref{corlim}
with~$\tau_R:=\chi_R$. In this way, for each~$k$, we
find a function $f_{u_k}: \mathbb{R}^n\to \mathbb{R}$
and a family of polynomials $P_{u_k,R}$, which have
degree at most $m-1$, such that, in~$B_1$,
\begin{equation}\label{02oek10ols}
\lim_{R\to +\infty}[A(\chi_R u_k)(x)-P_{u_k,R}(x)]=f_{u_k}(x).\end{equation}
As a matter of fact
(recall the
footnote on page~\pageref{FPTUNIF}),
we see that, for every~$x\in B_1$ and every~$k\in\N$,
\begin{equation}\label{VINDFKsdGgndfIR}
\Big| A(\chi_R u_k)(x)-P_{u_k,R}(x)-f_{u_k}(x)\Big|
\leq
C\int_{\R^n\setminus B_{R}}
|u_k(y)|\sup_{{ |\eta|=m}\atop{ z\in B_1} }|\partial^\eta_x K(z,y)|
\le\frac1R,
\end{equation}
as long as~$R$ is sufficiently large,
thanks to~\eqref{90-109x4t23-2P}.

Comparing~\eqref{02oek10ols} with Definition~\ref{def}, we thus conclude that~$
A u_k\stackrel{m}{=}f_{u_k}$ in~$B_1$.
This and~\eqref{SMDDIMD},
together with the uniqueness result in Lemma~\ref{lemma:uniq},
yield that~$f_{u_k}=f_k+\tilde P_k$ for a suitable polynomial~$\tilde P_k$
of degree at most~$m-1$.

Consequently, setting~$\tilde P_{u_k,R}:=P_{u_k,R}+\tilde P_k$,
we have that, by~\eqref{VINDFKsdGgndfIR},
\begin{equation}\label{2VINDFKsdGgndfIR}
\Big| A(\chi_R u_k)(x)-\tilde P_{u_k,R}(x)-f_k(x)\Big|
\le\frac1R.
\end{equation}

Now, we claim that, given~$R>4$,
\begin{equation}\label{MTHIN67G8LY9-98tr}
\lim_{k\to +\infty}
\sup_{B_1}\Big| A\big((u-u_{k})\chi_R\big)\Big|=0.
\end{equation}
To this end, 
we calculate that, for each~$x\in B_1$,
\begin{eqnarray*}&&
\Big| A\big((u-u_{k})\chi_R\big)(x)\Big|\\&=&\left|\int_{B_1(x)}\big((u-u_k)(x)-(u-u_k)(y)\big)K(x,y)\,dy+
\int_{B_R\setminus B_1(x)}\big((u-u_k)(x)-(u-u_k)(y)\big)K(x,y)\,dy\right.
\\&&\qquad\qquad\left.+\int_{\R^n\setminus B_R}(u-u_k)(x)\,K(x,y)\,dy\right|\\&\le&
\left|\int_{B_1(x)}\big((u-u_k)(x)-(u-u_k)(y)\big)K(x,y)\,dy\right|
+
\left|
\int_{B_R\setminus B_1(x)}\big((u-u_k)(x)-(u-u_k)(y)\big)K(x,y)\,dy\right|\\&&\qquad\qquad+\|u-u_k\|_{L^\infty(B_1)}
\int_{\R^n\setminus B_R}|K(x,y)|\,dy
\end{eqnarray*}
and hence~\eqref{MTHIN67G8LY9-98tr}
follows from~\eqref{locint}, \eqref{ST:001} (used here with~$\rho:=1$; notice that we can use~\eqref{ST:001}
in this setting
in light of~\eqref{addfortwehn0000})
and~\eqref{UHNBS D90PKSMD098765490}.

Thus, in light of~\eqref{MTHIN67G8LY9-98tr}, given~$R>4$
we can find~$k_R\in\N$ such that
\begin{equation}\label{RGHBNDINDFJDEBGIUJMS} \sup_{B_1}\Big| A\big((u-u_{k_R})\chi_R\big)\Big|\le\frac1R.\end{equation}
We define~$f_R:=f_{k_R}$ and~$P_R:=\tilde P_{u_{k_R},R}$.
We stress that~$P_R$ is a polynomial
of degree at most~$m-1$. Moreover, for every~$x\in B_1$,
\begin{eqnarray*}&&
\Big|A(\chi_R u)(x)-f_R(x)-P_R(x)\Big|\le
\Big| A\big((u-u_{k_R})\chi_R\big)(x)\Big|+
\Big|A(\chi_R u_{k_R})(x)-f_R(x)-P_R(x)\Big|\\&&\qquad\le\frac1R+
\Big|A(\chi_R u_{k_R})(x)-f_{k_R}(x)-\tilde P_{k_R}(x)\Big|\le\frac2R
\end{eqnarray*}
thanks to~\eqref{VINDFKsdGgndfIR}
and~\eqref{RGHBNDINDFJDEBGIUJMS},
which proves~\eqref{SKDMCFRDFV FM9}.
\end{proof}

A consequence of Lemmata~\ref{cor37} and~\ref{lemCS} is the following equivalence result:

\begin{corollary} \label{corequiv}
Let $\vartheta \in [0,2]$,
$K\in{\mathcal{K}}_{0,\vartheta}$ and~$u \in \mathcal{C}_{\vartheta,K}$. Let $f$ be bounded and continuous in $B_1$.

Then
$$Au=f {\mbox{ in }}B_1 $$
is equivalent to
$$Au\stackrel{0}{=}f \mbox{ in the sense of Definition \ref{def}}.$$
\end{corollary}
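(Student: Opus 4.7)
The plan is to establish the two implications separately. For the forward direction, suppose $Au = f$ classically in $B_1$. Since $K \in \mathcal{K}_{0,\vartheta}$ and $u \in \mathcal{C}_{\vartheta, K}$, condition~\eqref{mcond} with $m = 0$ guarantees that $\int_{B_3^c}|u(y)|\sup_{x \in B_1}|K(x,y)|\,dy < +\infty$, so the difference $A(\chi_R u)(x) - Au(x) = \int_{\R^n \setminus B_R} u(y)\,K(x,y)\,dy$ tends to zero uniformly for $x \in B_1$ as $R \to +\infty$. Setting $f_R := A(\chi_R u)$ and taking $P_R = 0$ (which is forced, since for $m = 0$ a polynomial of degree at most $-1$ must vanish) immediately gives $Au \stackrel{0}{=} f$ in the sense of Definition~\ref{def}.

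For the converse, suppose $Au \stackrel{0}{=} f$, so by Definition~\ref{def} there exist $f_R$ with $A(\chi_R u) = f_R$ in $B_1$ and $f_R \to f$ pointwise. The plan is to apply Lemma~\ref{lemCS} to the sequence $u_k := \chi_{R_k} u$, $f_k := f_{R_k}$, along some diverging sequence $R_k \to +\infty$. First, I would use Lemma~\ref{cor37} with $m = 0$: the infimum over polynomials of degree at most $-1$ collapses to $P = 0$, so $\|f_{R'} - f_R\|_{L^\infty(B_1)} \le \int_{B_R^c}|u(y)|\sup_{x \in B_1}|K(x,y)|\,dy$, which tends to zero by~\eqref{mcond}. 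Hence $\{f_R\}$ is Cauchy in $L^\infty(B_1)$ and converges uniformly to $f$. The remaining hypotheses of Lemma~\ref{lemCS} are then easily verified: $A u_k = f_k$ holds by construction; $f_k \to f$ uniformly, as just shown; for $R_k > 4$ one has $u_k = u$ on $B_4$, so the local convergence in any of the topologies listed in~\eqref{DEF:SPAZ} is trivial; and the tail condition~\eqref{CONUK2} reduces to $\int_{\R^n \setminus B_{R_k}}|u(y)||K(x,y)|\,dy \to 0$, again by~\eqref{mcond}. Lemma~\ref{lemCS} then delivers $Au = f$ in $B_1$.

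The main obstacle is upgrading the pointwise convergence $f_R \to f$ built into Definition~\ref{def} to the uniform convergence required by the stability result in Lemma~\ref{lemCS}. In general, the polynomial ambiguity intrinsic to the cut-off procedure obstructs such an upgrade (as illustrated by Proposition~\ref{KMS:CONTO}), but in the case $m = 0$ treated here the polynomial part is forced to vanish, and Lemma~\ref{cor37} supplies the uniform Cauchy estimate at essentially no extra cost.
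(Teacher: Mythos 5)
Your proof is correct and follows essentially the same strategy as the paper's. In the forward direction you bypass the paper's invocation of Theorem~\ref{cut-off} (applied there to $(\chi_R-\chi_{R/2})u$) and instead write $A(\chi_R u)(x)-Au(x)=\int_{\R^n\setminus B_R}u(y)K(x,y)\,dy$ directly for $x\in B_1$; this is a valid, slightly cleaner route to the same identity, since $Au=f$ is finite by hypothesis and the two principal values cancel near $x$. In the converse direction you mirror the paper's argument exactly: Lemma~\ref{cor37} with $m=0$ forces the competitor polynomial to be zero, which upgrades the pointwise convergence of $f_R$ to uniform convergence, and then Lemma~\ref{lemCS} applied to $u_k:=\chi_{R_k}u$ (whose hypotheses you verify correctly, including~\eqref{CONUK2} via~\eqref{mcond}) concludes.
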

\begin{proof}
Suppose that $Au=f$ in $B_1$. Then, for $R>10$,
\begin{equation} \label{322}
A(\chi_{R/2}u)(x)=
Au(x)-A((1-\chi_{R/2})u)(x)
=f(x)+\int_{\mathbb{R}^n}(1-\chi_{R/2}(y))u(y)K(x,y)\,dy
\end{equation}
for every $x\in B_1$. Now, we set
$$w:=(\chi_R-\chi_{R/2})u.$$
We observe that~$w=0$ in $B_4$, so we can exploit
Theorem \ref{cut-off} to $w$ (applied here
with $m=0$) and get that, for any $x \in B_1$,
\begin{equation} \label{323}
\begin{split}
A&((\chi_R-\chi_{R/2})u)(x)=Aw(x) \\
&=f_{1,w}+f_{2,w}+f^*_{w,\chi_R}(x)=\int_{B_R\setminus B_3}w(y)\psi(x,y)\,dy=\int_{B_R\setminus B_{R/2}}u(y)\psi(x,y)\,dy.
\end{split}
\end{equation}
Hence, from \eqref{322} and \eqref{323}, we find that
\begin{equation}
\begin{split}
A(\chi_R u)(x)&=A((\chi_R-\chi_{R/2})u)(x)+A(\chi_{R/2}u)(x) \\
&=\int_{B_R\setminus B_{R/2}}u(y)\psi(x,y)\,dy+f(x)+\int_{\mathbb{R}^n}(1-\chi_{R/2}(y))u(y)\psi(x,y)\,
dy \\
&=:f_R(x)
\end{split}
\end{equation}
for every $x\in B_1$. We remark that $f_R\to f$ in $B_1$ as $R \to +\infty$, thanks to \eqref{psibound} (used here with $m=0$ and $\gamma =(0,\ldots,0)$) and \eqref{mcond}.

Now we recall Definition \ref{def} (here with $m=0$ and $P_R=0$)
and we conclude that $Au\stackrel{0}{=}f$ in $B_1$, as desired.

Conversely, we now suppose that $Au\stackrel{0}{=}f$ in $B_1$. {F}rom Definition \ref{def}
and the fact that $m=0$, we have that $P_R$ is identically zero, and so we can write that $A(\chi_R u)=f_R$ in $B_1$, with $f_R \to f$ in $B_1$ as $R\to +\infty$.
We observe that $\chi_R u$ approaches $u$ locally uniformly in $\mathbb{R}^n$. Also, we can use here Lemma \ref{cor37}: 
in this way, we find that
$$\|f_{R'}-f_R\|_{L^\infty(B_1)}\le \int_{B_R^c} |u(y)|\sup_{x\in B_1}|K(x,y)|\,dy.$$
Therefore, we send $R' \to +\infty$ and obtain that, for any $x \in B_1$,
$$|f(x)-f_R(x)|=\lim_{R'\to +\infty}|f_{R'}(x)-f_R(x)|\le
\lim_{R'\to +\infty}\|f_{R'}-f_R \|_{L^\infty(B_1)}\le \int_{B_R^c} |u(y)|\sup_{x\in B_1}
|K(x,y)|\,dy.$$
As a consequence, recalling~\eqref{mcond}
(here with~$m=0$) we have that $f_R$ converges to $f$ uniformly in $B_1$ as $R \to +\infty$.
{F}rom this, we can exploit Lemma~\ref{lemCS}
and conclude that $Au=f$, as desired.
\end{proof}

A natural question deals with the consistency of the operator setting for functions that
are sufficiently well-behaved to allow definitions related to two different indices: roughly
speaking, in the best possible scenario, if we know that~$Au\stackrel{m}{=}f$ and~$j\le m$,
can we say that~$Au\stackrel{j}{=}f$? Posed like this, the answer to this question is negative,
since, after all, in light of Lemma~\ref{lemma:uniq}, the function~$f$ is uniquely defined only ``up to a polynomial''.
Nevertheless, the answer becomes positive if we take into account this additional polynomial normalization,
as stated in the next result:

\begin{lemma} \label{lemmaj} 
Let $j, m \in \N_0$, with~$j\le m$, $\vartheta\in[0,2]$
and $K \in \mathcal{K}_{j,\vartheta}\cap \mathcal{K}_{m,\vartheta}$.
Let~$f$ be bounded and continuous in~$B_1$ and let~$u \in \mathcal{C}_{\vartheta,K}$
such that
\begin{equation}\label{d3ivbrgretgyt57t}
Au\stackrel{m}{=}f\end{equation} in $B_1$. 

Then, there exist a function $\bar f$ and a polynomial $P$ of degree at most $m-1$,
such that~$\bar f=f+P$ and~$Au\stackrel{j}{=}\bar f$ in $B_1$.
\end{lemma}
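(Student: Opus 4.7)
The strategy is to produce the claimed $\bar f$ by running the canonical representation of Corollary~\ref{corlim} at the lower level $j$, and then to identify it with the given $f$, modulo a polynomial of degree at most $m-1$, via the uniqueness result of Lemma~\ref{lemma:uniq}. The main (and really only genuine) thing to check will be that Corollary~\ref{corlim} is applicable at level $j$, which amounts to verifying the integrability conditions for $u$ with respect to $K$ at level $j$; once this is granted, the argument is a short orchestration of the existence result (Corollary~\ref{corlim}/Remark~\ref{ifsmooth}), the monotonicity of the symbol $\stackrel{m}{=}$ in $m$ (Remark~\ref{higherorder}), and the uniqueness statement modulo low-degree polynomials (Lemma~\ref{lemma:uniq}).

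Concretely, the first step is to invoke Corollary~\ref{corlim} (equivalently, Remark~\ref{ifsmooth}) with the parameter ``$m$'' replaced by $j$ and $\tau_R:=\chi_R$. This is legitimate since $K\in\mathcal{K}_{j,\vartheta}$ and the conditions \eqref{ring} and \eqref{mcond} at level $j$ hold for $u$: the former is automatic from its level-$m$ counterpart (smaller range of multi-indices), while the latter is built into the hypothesis $u\in\mathcal{C}_{\vartheta,K}$ read with respect to $K\in\mathcal{K}_{j,\vartheta}$. The corollary then produces a function $\bar f$ and a family of polynomials $\widetilde P_R$ of degree at most $j-1$ such that
\[
\lim_{R\to+\infty}\bigl[A(\chi_R u)(x)-\widetilde P_R(x)\bigr]=\bar f(x)
\qquad\text{for every }x\in B_1,
\]
which, by Definition~\ref{def}, is exactly the statement $Au\stackrel{j}{=}\bar f$ in $B_1$.

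Finally, since $j\le m$, every polynomial of degree at most $j-1$ is also of degree at most $m-1$, so Remark~\ref{higherorder} (applied with exponents $j$ and $m-j$, which is permitted because $K\in\mathcal{K}_{j,\vartheta}\cap\mathcal{K}_{m,\vartheta}$) upgrades the previous identity to $Au\stackrel{m}{=}\bar f$ in $B_1$ when $j<m$; the case $j=m$ is immediate. Combined with the standing hypothesis \eqref{d3ivbrgretgyt57t}, we now have two level-$m$ representations of $Au$, and Lemma~\ref{lemma:uniq} yields a polynomial $P$ of degree at most $m-1$ with $\bar f-f=P$, that is $\bar f=f+P$. Together with $Au\stackrel{j}{=}\bar f$, this is the conclusion of the lemma.
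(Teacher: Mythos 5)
Your proof is correct, and it takes a genuinely different and cleaner route than the paper's. You observe that Corollary~\ref{corlim} (equivalently Remark~\ref{ifsmooth}) run at level~$j$ on $u$ itself immediately produces a function $\bar f$ with $Au\stackrel{j}{=}\bar f$; you then upgrade this to $Au\stackrel{m}{=}\bar f$ via Remark~\ref{higherorder} and invoke the abstract uniqueness statement Lemma~\ref{lemma:uniq} to identify $\bar f - f$ as a polynomial of degree at most~$m-1$. The paper instead works directly at the level of the truncated decompositions: it splits $u=(1-\chi_4)u+\chi_4 u=:v+w$, applies Corollary~\ref{corlim} to $v$ at level~$j$, exploits that $A(\chi_R w)=A(\chi_4 w)$ is $R$-independent for $R>4$ to show that $\lim_R(P_R-Q_R)$ exists (where $P_R$ and $Q_R$ are the degree-$\le m-1$ and degree-$\le j-1$ polynomials from the two decompositions), and then calls Lemma~2.1 of~\cite{MR3988080} to conclude this limit is a polynomial $P$, setting $\bar f:=f+P$. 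Your argument is shorter because Lemma~\ref{lemma:uniq} has already packaged exactly the ``limit of differences of low-degree polynomials is a low-degree polynomial'' step, so you get to reuse it rather than redoing the analysis by hand; the paper's version is more explicit about where $P$ comes from, at the cost of the auxiliary $v$/$w$ bookkeeping.

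Two small points worth noting, neither of which is a defect specific to your argument. First, your careful parenthetical that $u\in\mathcal{C}_{\vartheta,K}$ must be read at level~$j$ as well as level~$m$ is exactly right: condition~\eqref{mcond} at level~$j$ does not follow from its level-$m$ counterpart, and the paper's proof needs the same implicit reading (it requires $v$, hence $u$, to satisfy~\eqref{mcond} with $|\alpha|=j$). Second, both your argument and the paper's tacitly use that the canonical $f_u$ delivered by Corollary~\ref{corlim} is admissible as a right-hand side in Definition~\ref{def} (i.e.\ bounded and continuous); this is asserted without verification already in Remark~\ref{ifsmooth}, so you are entitled to rely on it, but it is worth being aware that for Lemma~\ref{lemma:uniq} to apply literally one needs $\bar f$ to be bounded and continuous, something that in your route is only established a posteriori once $\bar f=f+P$ is known.
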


\begin{proof} Let $v:=(1-\chi_4)u$ and $w:=\chi_4 u$. We notice that~$v$,
$w\in \mathcal{C}_{\vartheta,K}$. Hence, since $K\in \mathcal{K}_{j,\vartheta}$, recalling
Remark~\ref{ifsmooth}, \eqref{f12*},
\eqref{repr2} and~\eqref{f3}, we can write that
$$Av\stackrel{j}{=}f_v=\int_{B_4^c} u(y)\psi (x,y)$$
in $B_1$. That is, by Definition \ref{def},
\begin{equation} \label{Achiv}
A(\chi_R v)=\int_{B_4^c} u(y)\psi (x,y)\,dy+\tilde\varphi_R+Q_R,
\end{equation}
for some $\tilde\varphi_R$ such that $\tilde\varphi_R \to 0$ in $B_1$ as $R \to +\infty$ and a polynomial $Q_R$ of degree at most $j-1$.

Furthermore, by~\eqref{d3ivbrgretgyt57t}, and recalling Definition \ref{def}, we get that 
\begin{equation} \label{Achiu}
A(\chi_R u)=f+\varphi_R+P_R,
\end{equation}
for some $\varphi_R$ such that $\varphi_R \to 0$ if $R \to +\infty$
and a polynomial $P_R$ with deg$P_R\le m-1$. Therefore,
subtracting \eqref{Achiv} from \eqref{Achiu}, we obtain 
\begin{equation}\label{poiurt3t3645}
f+\varphi_R+P_R-\int_{B_4^c} u(y)\psi (x,y)\,dy-\tilde\varphi_R-Q_R=A(\chi_R(u-v))=A(\chi_R w)
.\end{equation}

We notice that, for every~$x\in B_1$ and every~$R>4$,
\begin{eqnarray*}
&& A(\chi_R w)(x)=\int_{\R^n}(\chi_R w(x)-\chi_R w(y))\,K(x,y)\,dy\\&&\qquad\quad=
\int_{B_R}(w(x)-w(y))\,K(x,y)\,dy+\int_{\R^n\setminus B_R} w(x)\,K(x,y)\,dy\\&&\qquad\quad
=\int_{B_4}(w(x)-w(y))\,K(x,y)\,dy+\int_{\R^n\setminus B_4} w(x)\,K(x,y)\,dy\\ &&\qquad\quad
=\int_{\R^n}(\chi_4w(x)-\chi_4 w(y))\,K(x,y)\,dy=A(\chi_4 w)(x).
\end{eqnarray*}
As a consequence of this and~\eqref{poiurt3t3645}, we have that, in~$B_1$,
$$f+\varphi_R+P_R-\int_{B_4^c} u(y)\psi (x,y)\,dy-\tilde\varphi_R-Q_R=A(\chi_4 w).$$
This shows that the limit
$$\lim_{R \to +\infty} (\varphi_R+P_R-\tilde\varphi_R-Q_R)$$
exists. As a result, 
the limit $$\lim_{R \to +\infty}(P_R-Q_R)$$ exists.
Then, exploiting Lemma 2.1 in \cite{MR3988080} we conclude that
$$\lim_{R \to +\infty} (P_R-Q_R)=P,$$
for some polynomial $P$ of degree at most $m-1$.

Now we set $\bar f:=f+P$ and $S_R:=\varphi_R+P_R-Q_R-P$, and
we see that $S_R \to 0$ as $R \to +\infty$. Thus, from~\eqref{Achiu} we obtain that
$$A(\chi_R u)=\bar f+S_R+Q_R$$
in $B_1$. Since the degree of $Q_R$ is at most $j-1$, this shows that $Au\stackrel{j}{=}\bar f$ in $B_1$, as desired.
\end{proof}

\section{The Dirichlet problem}\label{SEC-2}

In this section we consider the existence problem
for equations involving general operators that are defined ``up to a polynomial''.
The main result is the following:

\begin{theorem}\label{KS:098iKS-904596}
Let $m \in \N_0$, $\vartheta \in [0,2]$, $K\in
\mathcal{K}_{0,\vartheta}\cap {\mathcal{K}}_{m,\vartheta}$
and~$u \in \mathcal{C}_{\vartheta,K}$. Assume that
$u_0\in L^1_{\rm loc}(B_1^c)$ satisfies~\eqref{ring} and~\eqref{mcond} and~$
f: B_1 \to \R$ is bounded and
continuous in $B_1$.

Additionally, suppose that
for any~$\tilde f: B_1 \to \R$ which is bounded and
continuous in $B_1$ and any~$\tilde u_0\in L^1(B_1^c)$
there exists a unique
solution~$\tilde u\in{\mathcal{C}}_\vartheta$ to the Dirichlet problem
\begin{equation}  \label{Dirstandard}
\begin{cases} 
A\tilde u=\tilde f &{\mbox{ in }}B_1,\\
\tilde u= \tilde u_0 &{\mbox{ in }} B_1^c.
\end{cases}
\end{equation}
Then, there exists a
function $u \in \mathcal{C}_{\vartheta,K}$ such that
\begin{equation}  \label{Dir}
\begin{cases} 
Au\stackrel{m}{=}f&{\mbox{ in }}B_1,\\
u=u_0&{\mbox{ in }}B_1^c.
\end{cases}
\end{equation}
Also, the solution to~\eqref{Dir} is not unique, since
the space of solutions of \eqref{Dir} has dimention $N_m$, with
\begin{equation} \label{soldim}
N_m:=
\sum_{j=0}^{m-1}
\binom{j+n-1}{n-1}.
\end{equation}
\end{theorem}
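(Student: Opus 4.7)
The plan is to reduce the generalized Dirichlet problem \eqref{Dir} to two applications of the classical solvability hypothesis \eqref{Dirstandard} via a linear decomposition $u=\tilde u_0+v$, where $\tilde u_0$ is an extension of the exterior datum $u_0$ to all of $\R^n$ and $v$ is compactly supported in $\overline{B_1}$. Existence will follow from linearity of the relation $\stackrel{m}{=}$ (which is immediate from Definition~\ref{def}), while the dimension count will come from identifying the kernel of the homogeneous problem with the polynomials of degree at most $m-1$.

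For existence, I would first pick any extension $\tilde u_0:\R^n\to\R$ of $u_0$ with $\tilde u_0=u_0$ in $B_1^c$ and $\tilde u_0\in \mathcal{C}_{\vartheta,K}$: inside $B_4$ any sufficiently smooth extension works, and outside $B_3$ the tail conditions \eqref{ring} and \eqref{mcond} on $\tilde u_0$ reduce to those on $u_0$, which hold by hypothesis. By Remark~\ref{ifsmooth} (itself an application of Corollary~\ref{corlim}) one then has $A\tilde u_0\stackrel{m}{=}f_{\tilde u_0}$ in $B_1$, with $f_{\tilde u_0}=f_{1,\tilde u_0}+f_{2,\tilde u_0}+f_{3,\tilde u_0}$ as in \eqref{f12*} and \eqref{f3}. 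A direct inspection of these three pieces, combined with \eqref{locint}, \eqref{mcond} and \eqref{psibound}, shows that $f_{\tilde u_0}$ is bounded and continuous in $B_1$, so $f-f_{\tilde u_0}$ is a legitimate source for the classical problem. Invoking \eqref{Dirstandard} with source $f-f_{\tilde u_0}$ and zero exterior datum produces $v\in \mathcal{C}_{\vartheta}$ with $Av=f-f_{\tilde u_0}$ in $B_1$ and $v=0$ in $B_1^c$. Setting $u:=\tilde u_0+v$, linearity of $\stackrel{m}{=}$, together with Corollary~\ref{corequiv} and Remark~\ref{higherorder} (which upgrade $Av=f-f_{\tilde u_0}$ to $Av\stackrel{m}{=}f-f_{\tilde u_0}$), yields $Au\stackrel{m}{=}f$ in $B_1$; moreover $u=u_0$ in $B_1^c$ and $u\in \mathcal{C}_{\vartheta,K}$, since $v$ is compactly supported in $\overline{B_1}$ and does not alter the tail integrals.

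For the dimension count, let $w\in \mathcal{C}_{\vartheta,K}$ satisfy the homogeneous problem $Aw\stackrel{m}{=}0$ in $B_1$ with $w=0$ in $B_1^c$. Then $w$ is supported in $\overline{B_1}$, so $\chi_R w=w$ for every $R\ge 1$ and $A(\chi_R w)=Aw$ is independent of $R$. Unravelling Definition~\ref{def}, one obtains polynomials $P_R$ of degree at most $m-1$ with $Aw-P_R\to 0$ pointwise in $B_1$, and Lemma~2.1 of~\cite{MR3988080} then forces $Aw$ itself to coincide in $B_1$ with some polynomial $P$ of degree at most $m-1$. Conversely, for every such $P$ the hypothesis \eqref{Dirstandard} yields a unique $w_P\in \mathcal{C}_{\vartheta}$ with $Aw_P=P$ in $B_1$ and $w_P=0$ in $B_1^c$, and the assignment $P\mapsto w_P$ is linear and injective by uniqueness. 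The kernel of the homogeneous problem is therefore linearly isomorphic to the space of polynomials of degree at most $m-1$ in $n$ variables, whose dimension equals $\sum_{j=0}^{m-1}\binom{j+n-1}{n-1}=N_m$, giving the claimed dimension of the affine solution set.

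The only slightly delicate step is verifying that $f_{\tilde u_0}$ is bounded and continuous on $B_1$, which amounts to re-using the estimates already present in the proofs of Theorem~\ref{cut-off} and Corollary~\ref{corlim}; the rest is a bookkeeping exercise in linearity, Corollary~\ref{corequiv}, and Remark~\ref{higherorder}.
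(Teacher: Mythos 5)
Your dimension-count argument is essentially sound and takes a pleasantly direct route: observing that $\chi_R w = w$ for $w$ supported in $\overline{B_1}$ lets you apply Lemma~2.1 of~\cite{MR3988080} directly to conclude $Aw$ is a polynomial, which is morally the same as the paper's invocation of Lemma~\ref{lemmaj}, just unwound by hand. However, your existence argument has a genuine gap in the very first step, and it is not the one you flag as ``slightly delicate.''

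You propose to extend $u_0$ from $B_1^c$ to all of $\R^n$ as a function $\tilde u_0 \in \mathcal{C}_{\vartheta,K}$, saying ``inside $B_4$ any sufficiently smooth extension works.'' But $\tilde u_0$ must equal $u_0$ on the entire exterior $B_1^c$, including the annulus $B_4 \setminus B_1$, and membership in $\mathcal{C}_{\vartheta,K} \subset \mathcal{C}_\vartheta$ forces $\tilde u_0$ to be (at least) continuous and bounded on $B_4$, hence forces $u_0$ to be continuous and bounded on $B_4\setminus B_1$. The theorem only assumes $u_0 \in L^1_{\rm loc}(B_1^c)$ plus the tail conditions \eqref{ring}--\eqref{mcond}, which say nothing about $u_0$ near $\partial B_1$. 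So in general no such extension $\tilde u_0$ exists. The paper's splitting $u_1 := \chi_{B_4^c}\, u_0$ and $\tilde u_0 := \chi_{B_4\setminus B_1}\,u_0$ is designed precisely to dodge this: $u_1$ vanishes identically in $B_4$, so it is trivially in $\mathcal{C}_{\vartheta,K}$ and, moreover, $f_{1,u_1}=f_{2,u_1}=0$ so the source correction $f_{u_1}=f_{3,u_1}$ is easily seen to be bounded and continuous via \eqref{psibound} and \eqref{mcond}; the rough annular piece $\chi_{B_4\setminus B_1}\,u_0$ is instead absorbed as the exterior datum of the classical problem \eqref{Dirstandard}, where only $L^1(B_1^c)$ is required. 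Your approach would also need continuity of $f_{1,\tilde u_0}$ across the local kernel singularity, an additional burden that the paper's choice removes at the source. To repair your argument, replace your extension step by the paper's truncation: take $u_1 = \chi_{B_4^c} u_0$, solve the classical problem with source $f - f_{u_1}$ and exterior datum $\chi_{B_4\setminus B_1}\,u_0$, and set $u := u_1 + \tilde u$.
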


\begin{proof}
To begin with, we prove the existence of solutions for~\eqref{Dir}. To do this, we define 
\begin{equation*}
u_1:=\chi_{B_4^c}\,u_0 \qquad
{\mbox{ and }} \qquad\tilde u_0:=\chi_{B_4\setminus B_1} \,u_0.
\end{equation*}
Since $u_1$ vanishes in $B_4$ and
$K\in \mathcal{K}_{m,\vartheta}$, we can write $Au_1\stackrel{m}{=}f_{u_1}$ in $B_1$,
for some function $f_{u_1}$, due to Remark~\ref{ifsmooth}.

We now consider
the solution of \eqref{Dirstandard} with~$\tilde f:=f-f_{u_1}$. Therefore, using
Remark \ref{higherorder} and Corollary~\ref{corequiv}
we obtain that
\begin{equation*} 
\begin{cases} 
A\tilde u\stackrel{m}{=}\tilde f&{\mbox{ in }}B_1,\\
\tilde u= \tilde u_0&{\mbox{ in }}B_1^c.
\end{cases}
\end{equation*}
Then, we set $u:=u_1+\tilde u$ and we get that $Au=Au_1+A
\tilde u\stackrel{m}{=}f_{u_1}+\tilde f=f$ in $B_1$.
Moreover, we have that~$u=u_1+\tilde u_0=u_0$ in $B_1^c$, that is $u\in{\mathcal{C}}_{\vartheta,K}$
is solution of \eqref{Dir}. This
establishes the existence of solution for~\eqref{Dir}.

Now we prove that solutions of~\eqref{Dir}
are not unique and determine the dimension
of the corresponding linear space. For this, we notice that for any polynomial $P$ with $\deg P \le m-1$ there exists a unique solution $\tilde u_P\in{\mathcal{C}}_\vartheta$ of the problem
\begin{equation}\label{PIUUET}
\begin{cases} 
A\tilde u_P=P&{\mbox{ in }}B_1,\\
\tilde u_P=0&{\mbox{ in }} B_1^c,
\end{cases}
\end{equation}
due to the existence and uniqueness assumption
for~\eqref{Dirstandard}. This is equivalent to say that $A\tilde u_P\stackrel{0}{=}P$ in $B_1$, thanks to Corollary \ref{corequiv}. Using Remark \ref{higherorder}, we obtain that $A\tilde u_P\stackrel{m}{=}P$ in $B_1$. Thus, applying
Remark \ref{plusP}, we obtain that $\tilde u_P$ is a solution of
\begin{equation} \label{DirP} 
\begin{cases} 
A\tilde u_P\stackrel{m}{=}0&{\mbox{ in }}B_1,\\
\tilde u_P=0&{\mbox{ in }} B_1^c.
\end{cases}
\end{equation}
{F}rom this it follows that if $u$ is a solution of \eqref{Dir}, then $u+\tilde u_P$ is also a solution of \eqref{Dir}.

Viceversa, if $u$ and $v$ are two solutions of~\eqref{Dir}, then $w:=u-v$ is a solution of
\begin{equation*} 
\begin{cases} 
Aw\stackrel{m}{=}0&{\mbox{ in }}B_1,\\
w=0&{\mbox{ in }} B_1^c.
\end{cases}
\end{equation*}
Here we can apply Lemma \ref{lemmaj} with $j:=0$ thus obtaining that $Aw\stackrel{0}{=}P$ in $B_1$, where $P$ is a polynomial of $\deg P \le m-1$. Using again Corollary \ref{corequiv}, one deduces that
\begin{equation}\label{KSMD-DIV} 
\begin{cases} 
Aw=P&{\mbox{ in }}B_1,\\
w=0&{\mbox{ in }}B_1^c.
\end{cases}
\end{equation}
Therefore, the uniqueness of the solution of  \eqref{KSMD-DIV},
confronted with~\eqref{PIUUET}, gives us that~$w=\tilde u_P$, and thus~$v=u+\tilde u_P$.

This reasoning gives that the space of solutions of \eqref{Dir} is isomorphic to the space of polynomials with degree
less than or equal to $m-1$, which has exactly
dimension $N_m$, given by \eqref{soldim} (see e.g.~\cite{2021arXiv210107941D}).
\end{proof}

\section{A viscosity approach}\label{VIS-PJN-c}

Up to now, we focused our attention on the case of equations defined pointwise.
In principle, this requires functions that are ``sufficiently regular'' for the equation to be satisfied at every given point.
However, a less restrictive approach adopted in the classical theory of elliptic equations 
is to consider weaker notions of solutions (and possibly recover the pointwise setting via
an appropriate regularity theory): in this spirit, a convenient setting, which is also useful in case
of fully nonlinear equations, is that of viscosity solutions, 
which does not require a high degree of regularity of the solution itself
since the equation is computed pointwise only at smooth functions touching from either below or above
(see e.g.~\cite{CC95} for a thorough discussion on viscosity solutions).

In this section, we recast the setting of general operators defined ``up to a polynomial''
into the viscosity solution framework. To this end, we proceed as follows.
For all $m \in \N_0$
and~$\vartheta\in[0,2]$, we define $\mathcal{K}_{m,\vartheta}^+$ as the space of kernels $K=K(x,y)$ 
verifying~\eqref{taylor}, \eqref{locint} and~\eqref{assym}, and
such that 
\begin{equation}
\label{positive}K(x,y)\ge 0 {\mbox{ for all $x \in B_1$ and $y \in \R^n$}}.
\end{equation}
Given~$K\in\mathcal{K}_{m,\vartheta}^+$, we consider the space~$
{\mathcal{V}}_{K}$
of all the functions~$u\in
L^1_{\rm{loc}}(\R^n) \cap C(B_4)
\cap L^\infty(B_4)$ for which
\begin{eqnarray}
\label{ringv}
&&\sum_{|\alpha|\le m-1}\int_{B_R\setminus B_3}
|u(y)|\,|\partial^\alpha_x K(x,y)|\,dy<+\infty
\quad {\mbox{ for all~$R>3$ and $x \in B_1$}}\\
\label{mcondv}{\mbox{and }}
&&\int_{B^c_3} |u(y)|\sup_{{|\alpha|=m}\atop{x\in B_1}}
|\partial^\alpha_x K(x,y)|\,dy<+\infty.
\end{eqnarray}

\begin{remark} \label{implication}
{\rm Notice that if \eqref{positive} holds true and $K \in \mathcal{K}_{m,\vartheta}$ for some~$m\in\N_0$ and
some~$\vartheta \in [0,2]$, then~$K \in \mathcal{K}_{m,\vartheta}^+$.}
\end{remark}

In the viscosity framework we introduce the following definition.
\begin{definition} \label{defv} Let~$m\in\N_0$, $\vartheta\in[0,2]$,
$K\in \mathcal{K}_{m,\vartheta}^+$,
$u \in \mathcal{V}_{K}$ and $f : B_1 \to \R$ be bounded and continuous. 
We say that 
$$
Au\stackrel{m}{=}f \quad \mbox{in $B_1$ in the viscosity sense}
$$
if there exist a family of polynomials $P_R$, with $\deg P_R \le m-1$, and bounded
and continuous functions~$f_R: B_1 \to \R$ such that
\begin{equation} \label{decompv}
A(\chi_R u)=f_R+P_R
\end{equation}
in $B_1$ in the viscosity sense, with
\begin{equation} \label{limfv}
\lim_{R \to +\infty} f_R(x) =f(x) \quad{\mbox{ uniformly in }}B_1.
\end{equation}
\end{definition}

\begin{remark}{\rm
We point out that the limit in~\eqref{limfv}
is assumed to hold uniformly (this is a stronger assumption
than the one in~\eqref{limf} that was assumed
for the pointwise setting,
and it is taken here to make the setting compatible with the viscosity method,
see e.g. the proof of the forthcoming Corollary~\ref{corequivv}). See also~\cite{MR4038144}
for related observations.}\end{remark}

\begin{remark} \label{higherorderv}{\rm
We observe that,
for all~$j\in\N$ and~$K\in \mathcal{K}_{m,\vartheta}^+\cap
\mathcal{K}_{m+j,\vartheta}^+$,
\begin{equation*} 
\mbox{if } Au\stackrel{m}{=}f, \mbox{ then } Au\stackrel{m+j}{=}f
\end{equation*}
in $B_1$ in the viscosity sense of Definition \ref{defv}.
}\end{remark}

\begin{remark} \label{plusPv}{\rm {F}rom Definition \ref{defv} it follows that any polynomial of degree less than
or equal to $m-1$ can be arbitrarily added to $f_R$
and subtracted from $P_R$ in \eqref{decompv},
hence, for any polynomial~$P$ with $\deg P \le m-1$ we have that
\begin{equation*}
\mbox{if } Au\stackrel{m}{=}f, \mbox{ then } Au\stackrel{m}{=}f+P 
\end{equation*}
in $B_1$ in the viscosity sense of Definition \ref{defv}.
}\end{remark}

We now establish that when the structure is compatible with the both the settings in Definitions~\ref{def}
and~\ref{defv}, the pointwise and viscosity frameworks are equivalent:

\begin{lemma}\label{EQUIBVA}
Let~$m\in\N_0$,
$\vartheta \in [0,2]$, $K\in {\mathcal{K}}_{m,\vartheta}$,
$u \in \mathcal{C}_{\vartheta,K}$ and $f : B_1 \to \R$ be bounded and continuous. 
If~\eqref{positive} holds true and $u$ is a solution of $$Au\stackrel{m}{=}f \mbox{ in the
sense of Definition~\ref{def}},$$ then $K \in \mathcal{K}_{m,\vartheta}^+$, $u
\in \mathcal{V}_{K}$ and
it is a solution of $$Au\stackrel{m}{=}f \mbox{ in the viscosity sense of Definition~\ref{defv}}.$$

Conversely, 
let~$m\in\N_0$, $\vartheta\in[0,2]$,
$K\in \mathcal{K}_{m,\vartheta}^+$,
$u \in \mathcal{V}_{K}$ and $f : B_1 \to \R$ be bounded and continuous.
If~$K \in \mathcal{K}_{m,\vartheta}$
and $u\in{\mathcal{C}}_{\vartheta,K}$ is a solution of $$Au\stackrel{m}{=}f \mbox{ in the viscosity sense of Definition~\ref{defv}},$$
then~$u$ is a solution of 
$$Au\stackrel{m}{=}f \mbox{ in the sense of Definition~\ref{def}}.$$
\end{lemma}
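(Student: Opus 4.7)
My strategy is to treat the two implications separately. The common ingredient is the consistency between pointwise and viscosity notions for the truncated function $\chi_R u$, which lies in $\mathcal{C}_{\vartheta,K}$ (it coincides with $u$ in $B_3$ and is dominated by $|u|$ elsewhere, so all local and tail integrability conditions pass through from $u$), hence $A(\chi_R u)$ is pointwise defined and continuous on $B_1$ by Theorem~\ref{cut-off}. The principal technical point is that the pointwise Definition~\ref{def} only requires pointwise convergence of the approximating source terms, whereas the viscosity Definition~\ref{defv} demands uniform convergence in~\eqref{limfv}.

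For the forward direction, assume $Au \stackrel{m}{=} f$ in the sense of Definition~\ref{def}. The inclusion $K \in \mathcal{K}_{m,\vartheta}^+$ is exactly Remark~\ref{implication}, and $u \in \mathcal{V}_K$ follows by comparing~\eqref{ring}--\eqref{mcond} with~\eqref{ringv}--\eqref{mcondv} together with the embeddings in~\eqref{DEF:SPAZ} into $C(B_4) \cap L^\infty(B_4)$. To produce a family satisfying~\eqref{decompv}, I would apply Corollary~\ref{corlim} with $\tau_R := \chi_R$ to obtain the canonical pointwise decomposition $A(\chi_R u) = P_{u,\chi_R} + f_{u,\chi_R}$; the footnote on page~\pageref{FPTUNIF} (a direct consequence of Lemma~\ref{cor37}) ensures $f_{u,\chi_R} \to f_u$ uniformly on $B_1$. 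By Remark~\ref{ifsmooth} we have $Au \stackrel{m}{=} f_u$, so combining with the hypothesis and Lemma~\ref{lemma:uniq} gives a polynomial $P$ of degree at most $m-1$ with $f - f_u = P$. Setting $\hat f_R := f_{u,\chi_R} + P$ and $\hat P_R := P_{u,\chi_R} - P$ then yields the pointwise identity $A(\chi_R u) = \hat f_R + \hat P_R$ in $B_1$ with $\hat f_R \to f$ uniformly. Since the left-hand side is a pointwise-defined continuous function, the standard consistency between viscosity and pointwise formulations for integro-differential operators promotes this to the viscosity identity required by~\eqref{decompv}.

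For the backward direction, assume $A(\chi_R u) = f_R + P_R$ in the viscosity sense in $B_1$ with $f_R \to f$ uniformly. Since $K \in \mathcal{K}_{m,\vartheta}$ and $u \in \mathcal{C}_{\vartheta,K}$, Theorem~\ref{cut-off} with $\tau := \chi_R$ provides the pointwise representation $A(\chi_R u)(x) = P_{u,\chi_R}(x) + f_{u,\chi_R}(x)$ for $x \in B_1$, with continuous right-hand side. Applying consistency in the opposite direction forces $f_R + P_R = P_{u,\chi_R} + f_{u,\chi_R}$ pointwise on $B_1$, so $A(\chi_R u) = f_R + P_R$ pointwise. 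Since uniform convergence implies pointwise convergence, both requirements~\eqref{decomp}--\eqref{limf} are satisfied and $Au \stackrel{m}{=} f$ in the sense of Definition~\ref{def}.

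The subtlest point is the viscosity-pointwise consistency itself for $\chi_R u$: the indicator $\chi_R$ is discontinuous on $\partial B_R$, but since $R > 3$ this discontinuity lies outside a neighborhood of $B_1$ and only enters the tail integral, whose convergence is governed by~\eqref{locint} and~\eqref{mcond}; locally near $B_1$ the function $\chi_R u$ inherits the full $C^\vartheta$ (or higher) regularity of $u$, so classical results on equivalence between pointwise and viscosity formulations for integral operators (in the spirit of~\cite{CS11}) apply without modification.
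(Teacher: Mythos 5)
Your forward direction is essentially sound and parallels the paper's argument: you use Corollary~\ref{corlim} for the canonical decomposition $A(\chi_R u)=P_{u,\chi_R}+f_{u,\chi_R}$, the footnote (equivalently Lemma~\ref{cor37}) for the uniform convergence $f_{u,\chi_R}\to f_u$, Remark~\ref{ifsmooth} plus Lemma~\ref{lemma:uniq} for the fixed polynomial $P$ with $f-f_u=P$, and then renormalize. The paper instead proves directly that the \emph{original} $f_R$ given by Definition~\ref{def} converges uniformly (inspecting the proof of Lemma~\ref{cor37} to show the cross polynomial $P_{R,R'}$ vanishes, and invoking Lemma~2.1 of~\cite{MR3988080} to upgrade pointwise convergence of polynomials to uniform convergence), whereas you construct a modified family $\hat f_R$. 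Both are legitimate, since Definition~\ref{defv} only asks for \emph{some} admissible family. You should make explicit the simple but crucial fact that the promotion from pointwise to viscosity for $A(\chi_R u)$ uses the sign condition~\eqref{positive}: if $\varphi$ touches $\chi_R u$ from below at $x_0$, then $(\chi_R u-\varphi)(y)\ge 0=(\chi_R u-\varphi)(x_0)$ forces $A\varphi(x_0)\ge A(\chi_R u)(x_0)$.

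The backward direction, however, has a real gap. You write that ``applying consistency in the opposite direction forces $f_R+P_R=P_{u,\chi_R}+f_{u,\chi_R}$ pointwise'' and attribute this to classical equivalence results. But the implication ``viscosity $\Rightarrow$ pointwise'' is exactly the nontrivial step here, and the classical consistency result you invoke requires the solution to be punctually $C^{1,1}$ so that it can serve as its own test function. The truncation $\chi_R u$ is only $\mathcal{C}_\vartheta$ near $B_1$, and $\vartheta$ may be anywhere in $[0,2]$; when $\vartheta<2$ the function need not be touchable from both sides by smooth test functions at a given point, so the viscosity relation does not \emph{a priori} pin down the pointwise value. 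Indeed, two different viscosity right-hand sides are not automatically equal without further argument. The paper's proof supplies precisely the missing regularization: mollify $\chi_R u$ into $v_\e$, observe that $Av_\e=g_\e$ holds in the viscosity sense and hence (since $v_\e$ is smooth) pointwise, and then pass to the limit $\e\to 0$ using the $\mathcal{C}_\vartheta$ control, the symmetrization~\eqref{assym}, the integrability condition~\eqref{locint}, and dominated convergence. Without this convolution argument (or an equivalent approximation scheme) your assertion that the pointwise and viscosity right-hand sides coincide is unjustified.
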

\begin{proof}
Assume that $u$ is a solution of $Au\stackrel{m}{=}f \mbox{ in $B_1$ in the sense of Definition~\ref{def}}$. It follows that there exist a family of polynomials $P_R$ with $\deg P_R \le m-1$ and functions $f_R$ such that \begin{equation}\label{CJS-080}
A(\chi_R u)=f_R+P_R\end{equation} pointwise in $B_1$.
Since~$u \in \mathcal{C}_{\vartheta,K}$
we have that also~$u \in \mathcal{V}_{K}$. Moreover, we have that $K \in \mathcal{K}_{m,\vartheta}^+$, due to \eqref{positive} and Remark \ref{implication}. 

Now we observe that if~$v\in{\mathcal{C}}_\vartheta$ and vanishes outside~$B_2$, $g$ is a bounded and continuous function,
and~$Av=g$ pointwise in~$B_1$, then also
\begin{equation}\label{CJS-081}
{\mbox{$Av=g$ in~$B_1$ in the viscosity sense.}}\end{equation} To check this,
let~$\varphi$ be a smooth function touching~$v$ from below at some point~$x_0\in B_1$. Then, we have that~$v(y)-\varphi(y)\ge0= v(x_0)-\varphi(x_0)$ for all~$y\in\R^n$ and therefore, by~\eqref{positive},
$$ A\varphi(x_0)=\int_{\mathbb{R}^n}(\varphi(x_0)-\varphi(y))K(x_0,y)\,dy\ge
\int_{\mathbb{R}^n}(v(x_0)-v(y))K(x_0,y)\,dy=Av(x_0)=g(x_0).
$$
Similarly, if~$\varphi$ touches~$v$ from above, one obtains the opposite inequality, and these
observations complete the proof of~\eqref{CJS-081}.

As a consequence of~\eqref{CJS-080}
and~\eqref{CJS-081}, we obtain that
$$
A(\chi_R u)=f_R+P_R$$ in $B_1$ in the viscosity sense.

Hence, to finish the first part of the proof, we show that
\begin{equation}\label{ifefeugthut8t49}
{\mbox{$f_R\to f$ uniformly in~$B_1$.}}
\end{equation}
For this, we observe that, in light of Remark~\ref{ifsmooth}, we can write~$Au\stackrel{m}{=}f_u$ in $B_1$
in the sense of Definition~\ref{def}. That is, recalling Corollary~\ref{corlim},
there exist functions $f_{u,R}$ and a family of polynomials $P_{u,\tau_R}$, which have degree at most $m-1$, such that
\begin{equation}\label{CJS-0802}
A(\chi_R u)=f_{u,R}+P_{u,\chi_R}\end{equation} pointwise in~$B_1$.

Furthermore, using Lemma~\ref{cor37}, we have that, if $R'>R>4$, there exists a polynomial~$P_{R,R'}$
of degree at most~$m-1$ such that
\begin{equation}\label{04386jreighrjij}
\|f_{u,R'}-f_{u,R}-P_{R,R'}\|_{L^\infty(B_1)}\le \int_{B_R^c} |u(y)|\sup_{{|\alpha|=m}\atop{x\in B_1
}}|\partial^\alpha_x K(x,y)|\,dy.
\end{equation}
We now claim that
\begin{equation}\label{sjiery48674895}
P_{R,R'}=0.
\end{equation}
Indeed, by a careful inspection of the proof of Lemma~\ref{cor37}, one can notice that
the polynomial~$P_{R,R'}$ is explicit and, denoting by~$v:=(1-\chi_4)u$, it is equal to
$$
P_{v,\chi_{R'}}-P_{u,\chi_{R'}}-P_{v,\chi_R}+P_{u,\chi_R},
$$
where we have used the notation of Theorem~\ref{cut-off}.
In particular, recalling the notation in formulas~\eqref{theta} and~\eqref{P}, we have that
the coefficients of the polynomial~$P_{R,R'}$ are given by
\begin{eqnarray*}&&
\int_{B^c_3} (\chi_{R'} v)(y)\frac{\partial^{\alpha}_x K(0,y)}{\alpha!}\,dy
-\int_{B^c_3} (\chi_{R'} u)(y)\frac{\partial^{\alpha}_x K(0,y)}{\alpha!}\,dy\\&&\qquad
-\int_{B^c_3} (\chi_{R} v)(y)\frac{\partial^{\alpha}_x K(0,y)}{\alpha!}\,dy
+\int_{B^c_3} (\chi_{R} u)(y)\frac{\partial^{\alpha}_x K(0,y)}{\alpha!}\,dy\\&=&
\int_{B_{R'}\setminus B_4}  u(y)\frac{\partial^{\alpha}_x K(0,y)}{\alpha!}\,dy
-\int_{B_{R'}\setminus B_3} u(y)\frac{\partial^{\alpha}_x K(0,y)}{\alpha!}\,dy\\&&\qquad
-\int_{B_R\setminus B_4} u(y)\frac{\partial^{\alpha}_x K(0,y)}{\alpha!}\,dy
+\int_{B_R\setminus B_3}  u(y)\frac{\partial^{\alpha}_x K(0,y)}{\alpha!}\,dy
\\&=&-
\int_{B_{4}\setminus B_3}  u(y)\frac{\partial^{\alpha}_x K(0,y)}{\alpha!}\,dy
+\int_{B_4\setminus B_3} u(y)\frac{\partial^{\alpha}_x K(0,y)}{\alpha!}\,dy\\&=&0,
\end{eqnarray*}
for every~$|\alpha|\le m-1$, which proves~\eqref{sjiery48674895}.

Hence, using the information of formula~\eqref{sjiery48674895} into~\eqref{04386jreighrjij}, we obtain
that~$f_{u,R}$ converges to~$f_u$ uniformly in~$B_1$.

Now, as a consequence of~\eqref{CJS-080} and~\eqref{CJS-0802},
we have that
\begin{equation}\label{dtuy54yu4ygjl}
\lim_{R\to+\infty} \big( P_{u,\chi_R}-P_R\big)=
\lim_{R\to+\infty} \big(f_R-f_{u,R}\big)=f-f_u
\end{equation}
in~$B_1$. Therefore, in light of Lemma~2.1 in~\cite{MR3988080}, we have that
the convergence in~\eqref{dtuy54yu4ygjl} is uniform in~$B_1$. 
Furthermore,
\begin{eqnarray*}
\|f_R-f\|_{L^\infty(B_1)}\le \|f_R-f_{u,R}+f_u-f\|_{L^\infty(B_1)}+\|f_{u,R}-f_u\|_{L^\infty(B_1)}.
\end{eqnarray*}
These considerations prove~\eqref{ifefeugthut8t49} and therefore,
the first part of the proof is complete.

Now take $K \in \mathcal{K}_{m,\vartheta}^+$
and a solution $u\in{\mathcal{C}}_{\vartheta,K}$ to $Au\stackrel{m}{=}f \mbox{ in $B_1$ in the viscosity
sense of Definition~\ref{defv}}$. We have that there exist a family of
polynomials $P_R$ with $\deg P_R \le m-1$ and functions $f_R$ such that 
\begin{equation}\label{9876MNBV}
{\mbox{$A(\chi_R u)=f_R+P_R$ in $B_1$ in the
viscosity sense.}}\end{equation}
Our objective is now to check that
\begin{equation}\label{9876MNBV-2}
{\mbox{the equation in~\eqref{9876MNBV} holds true in the pointwise
sense as well.}}\end{equation}
Indeed, once this is established, we can send~$R\to+\infty$
and conclude that~$Au\stackrel{m}{=}f$ in the pointwise
sense of Definition~\ref{def}.
To prove~\eqref{9876MNBV-2}, we use a convolution argument.
We pick~$\rho\in(0,1)$ and we define~$v_\e$ to be the convolution
of~$\chi_R u$ against a given mollifier~$\eta_\e$.
We also denote by~$g_\e$ the convolution of~$f_R+P_R$
against~$\eta_\e$ and we remark that, if~$\e$ is small enough, then~$
A v_\e=g_\e$ in $B_\rho$ in the
viscosity sense, and actually also in the pointwise sense, since~$v_\e$
is smooth and can be used itself as a test function in the viscosity definition.
Hence, we can take any point~$x_0\in B_\rho$ and conclude that
\begin{equation}\label{0-0-AM 90-01}
\lim_{\e\to0}
\int_{\mathbb{R}^n}(v_\e(x_0)-v_\e(y))K(x_0,y)\,dy
=\lim_{\e\to0}A v_\e(x_0)=\lim_{\e\to0}g_\e(x_0)=f_R(x_0)+P_R(x_0).
\end{equation}
We now claim that, for all~$R>5$,
\begin{equation}\label{0-0-AM 90-02}
\lim_{\e\to0}
\int_{\mathbb{R}^n}(v_\e(x_0)-v_\e(y))K(x_0,y)\,dy
=
\int_{\mathbb{R}^n}(\chi_R(x_0)u(x_0)-\chi_R(y)u(y))K(x_0,y)\,dy.\end{equation}
We stress that, once this is proved, then~\eqref{9876MNBV-2}
would follow directly from~\eqref{0-0-AM 90-01}.
Hence, our goal now is to check~\eqref{0-0-AM 90-02}.
We perform the argument when~$\vartheta\in(1,2]$
(the argument when~$\vartheta\in[0,1]$ being similar
and simpler, not requiring any additional symmetrization).
We exploit~\eqref{assym} to see that
\begin{equation}\label{kjhgfFGHNSiujhbVS}
\begin{split}
&2\int_{B_1(x_0)}(v_\e(x_0)-v_\e(y))K(x_0,y)\,dy\\ &\qquad
=\int_{B_1}(v_\e(x_0)-v_\e(x_0+z))K(x_0,x_0+z)\,dz
+
\int_{B_1}(v_\e(x_0)-v_\e(x_0-z))K(x_0,x_0-z)\,dz\\&\qquad
=\int_{B_1}\big(2v_\e(x_0)-v_\e(x_0+z)-v_\e(x_0-z)\big)K(x_0,x_0+z)\,dz.
\end{split}\end{equation}
Also, since~$u\in{\mathcal{C}}_{\vartheta,K}$ (and we are supposing~$\vartheta\in(1,2]$),
for all~$z\in B_1$,
\begin{eqnarray*}&&
|2v_\e(x_0)-v_\e(x_0+z)-v_\e(x_0-z)|=
\left|\int_0^1\Big( \nabla v_\e(x_0+tz)
-\nabla v_\e(x_0-tz)\Big)\cdot z\,dt
\right|\\&&\qquad\le
|z|\int_0^1
\Big| \nabla v_\e(x_0+tz)
-\nabla v_\e(x_0-tz)\Big|\,dt\\&&\qquad=
|z|\int_0^1
\left| \int_{B_\e} \nabla (\chi_R u)(x_0+tz-\zeta)\eta_\e(\zeta)\,d\zeta
-\int_{B_\e} \nabla (\chi_R u)(x_0-tz-\zeta)\eta_\e(\zeta)\,d\zeta\right|\,dt
\\&&\qquad\le
|z|\int_0^1
\int_{B_\e} \left|\nabla (\chi_R u)(x_0+tz-\zeta)-
\nabla (\chi_R u)(x_0-tz-\zeta)\right|\eta_\e(\zeta)\,d\zeta\,dt
\\&&\qquad\le
C |z|^\vartheta\int_0^1
\int_{B_\e} \eta_\e(\zeta)\,d\zeta\,dt\\&&\qquad=C |z|^\vartheta,
\end{eqnarray*}
for some~$C>0$.

{F}rom this and the Dominated Convergence Theorem,
recalling~\eqref{locint}, we deduce from~\eqref{kjhgfFGHNSiujhbVS}
that
\begin{eqnarray*}&&
\lim_{\e\searrow0}2\int_{B_1(x_0)}(v_\e(x_0)-v_\e(y))K(x_0,y)\,dy
=\int_{B_1}\lim_{\e\searrow0}\big(2v_\e(x_0)-v_\e(x_0+z)-v_\e(x_0-z)\big)K(x_0,x_0+z)\,dz\\
&&\qquad=
\int_{B_1}\big(2(\chi_R u)(x_0)-(\chi_R u)(x_0+z)-(\chi_R u)(x_0-z)\big)K(x_0,x_0+z)\,dz
\\&&\qquad
=2\int_{B_1(x_0)}\big((\chi_R u)(x_0)-(\chi_R u)(y)\big)K(x_0,y)\,dy.
\end{eqnarray*}
Using again the Dominated Convergence Theorem,
one deduces~\eqref{0-0-AM 90-02} from the previous equation, as desired.
\end{proof}

Next result shows
the stability of the equation under the uniform convergence in the viscosity sense. To this end, we will also
assume other mild conditions on the kernel.
First of all, we assume a continuity hypothesis in the first variable,
that is we suppose that
\begin{equation}\label{CONZTINY}
{\mbox{for all~$y\in\R^n$ and all~$x_0\in B_3\setminus\{y\}$, }}\;
\lim_{x\to x_0} K(x,y)=K(x_0,y).
\end{equation}
Additionally, we assume
a local integrability condition outside a possible singularity of the kernel
and a locally uniform version of condition~\eqref{locint}, namely we suppose that
\begin{equation}\label{locint-RINF-PRE}{\mbox{for all $x_0\in B_3$ and all~$r>0$, }}\;
\int_{\R^n\setminus B_r(x_0)} \;\sup_{x\in B_3\setminus B_r(y)} |K(x,y)|\,dy<+\infty
\end{equation}
and
\begin{equation}\label{locint-RINF}
\int_{B_3} \sup_{x\in B_1} |x-y|^2 \,|K(x,y)|\,dy<+\infty
.\end{equation}

\begin{lemma}\label{lemCSv} Let~$\vartheta\in[0,2]$.
Let~$K\in\mathcal{K}_{0,\vartheta}^+$ satisfying~\eqref{CONZTINY}, \eqref{locint-RINF-PRE} and~\eqref{locint-RINF}. 
For every~$k\in\N$, let~$u_k \in \mathcal{V}_{K}$ and~$f_k$ be bounded and continuous
in $B_1$. Assume that
\begin{equation}\label{CONUK0v}
A u_k=f_k
\end{equation}
in~$B_1$ in the viscosity sense, that
\begin{equation*}
{\mbox{$f_k$ converges uniformly in~$B_1$ to some function~$f$ as~$k\to+\infty$,}}
\end{equation*}
that
\begin{equation}\label{CONUKv}
{\mbox{$u_k$ converges uniformly in~$B_4$ to some function~$u\in \mathcal{V}_{K} $ as~$k\to+\infty$}}
\end{equation}
and that
\begin{equation}\label{CONUK2v}
\lim_{k\to+\infty} \int_{\mathbb{R}^n\setminus B_3}\big|u(y)-u_k(y)
\big| \sup_{x\in B_1}|K(x,y)|\,dy=0.
\end{equation}
Then,
\begin{equation*}
A u=f
\end{equation*}
in~$B_1$ in the viscosity sense.
\end{lemma}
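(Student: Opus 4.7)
The plan is to verify that $u$ is a viscosity subsolution of $Au=f$ in $B_1$; the supersolution case is entirely symmetric. Fix $x_0\in B_1$ and a $C^2$ function $\varphi$ touching $u$ from above at $x_0$. After the routine replacement of $\varphi$ by $\varphi+\delta|x-x_0|^2$ (with $\delta>0$ to be sent to zero at the end), I may assume the contact is strict on a closed ball $\overline{B_r(x_0)}\Subset B_1$, with $r$ small enough that $B_r(x_0)\subset B_3$.

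The uniform convergence $u_k\to u$ on $B_4$, together with the strictness of the touching, produces a sequence $x_k\to x_0$ at which $u_k-\varphi$ achieves its maximum over $\overline{B_r(x_0)}$; set $c_k:=(u_k-\varphi)(x_k)\to 0$. Then $\varphi+c_k$ touches $u_k$ from above at $x_k$ inside $B_r(x_0)$, so by the viscosity subsolution property \eqref{CONUK0v} of $u_k$, for any $\rho\in(0,r/4)$ (which guarantees $B_\rho(x_k)\Subset B_r(x_0)$ for $k$ large), the standard splitting of $A$ around $x_k$ yields
\begin{equation*}
L_k+T_k\le f_k(x_k),
\end{equation*}
where $L_k:=\int_{B_\rho(x_k)}(\varphi(x_k)-\varphi(y))K(x_k,y)\,dy$ and $T_k:=\int_{B_\rho(x_k)^c}(u_k(x_k)-u_k(y))K(x_k,y)\,dy$.

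The rest of the argument passes to the limit term by term. Since $f_k\to f$ uniformly and $f$ is continuous, $f_k(x_k)\to f(x_0)$. For $L_k$, symmetrizing via \eqref{assym} as in the proof of Theorem~\ref{cut-off} bounds the integrand by $\tfrac12\|D^2\varphi\|_{L^\infty}|y-x_k|^2|K(x_k,y)|$, which is dominated by $\sup_{x\in B_1}|x-y|^2|K(x,y)|$ — integrable over $B_3$ by \eqref{locint-RINF}; combined with the pointwise continuity \eqref{CONZTINY} of $K$ in the first variable and the Dominated Convergence Theorem, this gives $L_k\to\int_{B_\rho(x_0)}(\varphi(x_0)-\varphi(y))K(x_0,y)\,dy$. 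For the tail $T_k$, split the domain into $B_3^c$ and $B_3\setminus B_\rho(x_k)$. On $B_3^c$, use $u_k(x_k)\to u(x_0)$, the assumption \eqref{CONUK2v} (which kills $\int_{B_3^c}|u-u_k||K(x_k,\cdot)|\,dy$), and Dominated Convergence with dominator $|u(x_0)-u(y)|\sup_{x\in B_1}|K(x,y)|$ (integrable thanks to $u\in\mathcal{V}_K$, i.e.~\eqref{mcondv} with $m=0$), together with \eqref{CONZTINY}. On $B_3\setminus B_\rho(x_k)$, which for large $k$ is contained in $B_3\setminus B_{\rho/2}(x_0)$, the estimate $|K(x_k,y)|\le\sup_{x\in B_3\setminus B_{\rho/2}(y)}|K(x,y)|$ combined with \eqref{locint-RINF-PRE} supplies a $k$-uniform integrable majorant, and uniform convergence of $u_k$ on $B_4$ together with \eqref{CONZTINY} identifies the limit. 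Adding the limits and then sending $\delta\to 0$ produces the viscosity subsolution inequality $A[\varphi,u](x_0)\le f(x_0)$.

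The principal technical obstacle is the tail term $T_k$: the integration domain $B_\rho(x_k)^c$ shifts with $k$, the kernel may be singular at $y=x_k$, and $u_k$ is only bounded (not controlled with $k$-uniform tail decay) far from $x_0$. Hypothesis \eqref{locint-RINF-PRE} is tailored precisely to this purpose, supplying an integrable envelope for $|K(x_k,y)|$ independent of $k$, while \eqref{CONZTINY} gives $K(x_k,y)\to K(x_0,y)$ for each fixed $y$; together they make Dominated Convergence applicable. With these in place, the rest of the stability argument follows the standard template for integro-differential viscosity solutions.
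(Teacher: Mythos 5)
Your argument is correct and follows essentially the same route as the paper's proof: locate a sequence of contact points $x_k\to x_0$ via a quadratic penalization, invoke the viscosity inequality for $u_k$ at $x_k$, and pass to the limit by splitting the integral into a local piece (controlled via \eqref{assym}, \eqref{locint-RINF} and a second-order Taylor bound on the test function) and a tail piece (controlled via \eqref{CONZTINY}, \eqref{locint-RINF-PRE}, \eqref{mcondv}, \eqref{CONUK2v} and the uniform convergence in~$B_4$). The only cosmetic differences are that you use a fixed penalization $+\delta|x-x_0|^2$ sent to zero at the end where the paper uses the $k$-dependent correction $-\sqrt{\varepsilon_k}|x-x_0|^2$, and you invoke Dominated Convergence for the local piece where the paper invokes Vitali's theorem; both choices are interchangeable here.
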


\begin{proof}
Let~$x_0\in B_1$ and $\rho>0$ such that~$B_\rho(x_0)\Subset B_1$.
Let~$\varphi\in C^2(\overline{B_\rho(x_0)})$ with~$\varphi=u$ outside~$B_\rho(x_0)$.
Suppose that~$v:=\varphi-u$ has a local maximum at~$x_0$.

We define, for every~$k\in\N$, 
$$ \varepsilon_k:=\| u-u_k\|_{L^\infty(B_1)}+\frac1k $$
and
$$ \varphi_k(x):=\begin{cases} \varphi(x)-\sqrt{\varepsilon_k}\,|x-x_0|^2 &{\mbox{ in }}\overline{B_\rho(x_0)},
\\
u_k(x) &{\mbox{ in }} \R^n\setminus \overline{B_\rho(x_0)}.
\end{cases}$$
We let~$v_k:=\varphi_k-u_k$ and~$x_k\in\overline{B_\rho(x_0)}$ be such that
$$v_k(x_k)=\max_{\overline{B_\rho(x_0)}}v_k.$$ We observe that
\begin{eqnarray*}
|x_k-x_0|^2&=&\frac{ \varphi(x_k)- \varphi_k(x_k)}{\sqrt{\varepsilon_k}}\\
&=&\frac{ v(x_k)- v_k(x_k)+u(x_k)-u_k(x_k)}{\sqrt{\varepsilon_k}}\\
&\le&\frac{ v(x_0)- v_k(x_0)+\varepsilon_k}{\sqrt{\varepsilon_k}}\\
&=&\frac{\varphi(x_0)- \varphi_k(x_0)-u(x_0)+u_k(x_0)+\varepsilon_k}{\sqrt{\varepsilon_k}}\\
&=&\frac{-u(x_0)+u_k(x_0)+\varepsilon_k}{\sqrt{\varepsilon_k}}\\&\le&2\sqrt{\varepsilon_k}.
\end{eqnarray*}
Thus, since~$\varepsilon_k$ is infinitesimal due to~\eqref{CONUKv},
we have that~$x_k$ converges to~$x_0$ as~$k\to+\infty$ and, in particular,
the function~$v_k$ has an interior maximum at~$x_k$.
This and~\eqref{CONUK0v} give that
\begin{equation}\label{MDDMR} 0 \le A \varphi_k(x_k) -f_k(x_k)\le A \varphi_k(x_k)-f(x_0)+|f(x_0)-f(x_k)|+
\|f-f_k\|_{L^\infty(B_1)}.\end{equation}

Now we claim that
\begin{equation}\label{ST:001v}
\lim_{k\to+\infty}
\int_{B_\rho(x_0)}\big(\varphi_k(x_k)-\varphi_k(y)\big)K(x_k,y)\,dy
=\int_{B_\rho(x_0)}\big(\varphi(x_0)-\varphi(y)\big)K(x_0,y)\,dy
.\end{equation}
To this end, we first observe that
\begin{equation}\label{-ST:002v}\begin{split}
\varphi_k(x_k)-\varphi_k(y)\,
=\,& \varphi(x_k)-\varphi(y)+\sqrt{\varepsilon_k}|y-x_0|^2-\sqrt{\varepsilon_k}|x_k-x_0|^2\\
=\,& \varphi(x_k)-\varphi(y)+\sqrt{\varepsilon_k}(2x_0-x_k-y)\cdot(x_k-y)
.\end{split}\end{equation}
We define
\begin{equation}\label{FYDE} F(y):=\sup_{x\in B_1} |x-y|^2 \, |K(x,y)|\end{equation}
and we observe that~$F\in L^1(B_3)$, thanks to~\eqref{locint-RINF}.
Accordingly, by the absolute continuity of the Lebesgue integrals,
for all~$\varepsilon>0$ there exists~$\delta>0$ such that if the Lebesgue
measure of a set~$Z\subset B_3$ is less than~$\delta$, then
\begin{equation}\label{uh8uh7uyh6tgf4rd4r}
\int_Z F(y)\,dy\le \varepsilon.
\end{equation}

We recall~\eqref{assym} and we see that, for $k$ sufficiently large,
\begin{eqnarray*}
&& \int_{B_{\rho/2}(x_k)}\big(\varphi_k(x_k)-\varphi_k(y)\big)K(x_k,y)\,dy\\
&=&\frac12 \int_{B_{\rho/2}}\big(\varphi_k(x_k)-\varphi_k(x_k+z)\big)K(x_k,x_k+z)\,dz
+\frac12\int_{B_{\rho/2}}\big(\varphi_k(x_k)-\varphi_k(x_k-z)\big)K(x_k,x_k-z)\,dz\\
&=&\frac12 \int_{B_{\rho/2}}\big(2\varphi_k(x_k)-\varphi_k(x_k+z)-\varphi_k(x_k-z)\big)K(x_k,x_k+z)\,dz\\
&=&\frac12 \int_{B_\rho(x_0)}\chi_{B_{\rho/2}(x_k)}(y)\big(2\varphi_k(x_k)-\varphi_k(y)-\varphi_k(2x_k-y)\big)K(x_k,y)\,dy.
\end{eqnarray*}
Thus, for all~$y\in B_\rho(x_0)$ we define
$$ \zeta_k(y):=
\frac12 \chi_{B_{\rho/2}(x_k)}(y)\big(2\varphi_k(x_k)-\varphi_k(y)-\varphi_k(2x_k-y)\big)K(x_k,y)$$
and we point out that
\begin{eqnarray*} |\zeta_k(y)|&\le&\frac12
\Big|\big(\varphi_k(x_k)-\varphi_k(y)\big)+\big(\varphi_k(x_k)-\varphi_k(2x_k-y)\big)\Big|\,|K(x_k,y)|
\\&=&
\frac12
\left|\left( \int_0^1 \nabla\varphi_k\big(tx_k+(1-t)y\big)\,dt
-\int_0^1 \nabla\varphi_k\big(tx_k+(1-t)(2x_k-y)\big)\,dt \right)\cdot(x_k-y)\right|\,|K(x_k,y)|\\
&=&
\Bigg|
 \int_0^1\left((1-t)\int_0^1 D^2\varphi_k
 \Big(\tau\big(tx_k+(1-t)y\big)+(1-\tau)\big(tx_k+(1-t)(2x_k-y)\big)\Big) \,d\tau\right)\,dt
\\&&\qquad (x_k-y)
\cdot(x_k-y)\Bigg|\,|K(x_k,y)|\\&\le&C\,|x_k-y|^2\,|K(x_k,y)|\\&\le& C\,F(y),
\end{eqnarray*}
where the notation in~\eqref{FYDE} was used.
In particular, since~$F\in L^1(B_3)$ by~\eqref{locint-RINF},
we can exploit
the absolute continuity of the Lebesgue integrals (see~\eqref{uh8uh7uyh6tgf4rd4r}) and deduce that
for all~$\varepsilon>0$ there exists~$\delta>0$ such that if the Lebesgue
measure of a set~$Z\subset B_\rho(x_0)$ is less than~$\delta$, then
$$ \int_Z|\zeta_k(y)|\,dy\le C\varepsilon.$$
Hence, recalling~\eqref{CONZTINY},
we utilize the Vitali Convergence Theorem
and obtain that
\begin{eqnarray*}&& \lim_{k\to+\infty}
\int_{B_\rho(x_0)}\zeta_k(y)\,dy=
\int_{B_\rho(x_0)}\lim_{k\to+\infty}\zeta_k(y)\,dy\\&&\qquad=\frac12
\int_{B_{\rho/2}(x_0)}
\big(2\varphi(x_0)-\varphi(y)-\varphi(2x_0-y)\big)K(x_0,y)
\end{eqnarray*}
which proves that
\begin{equation}\label{ST:001-BMEZZI}
\lim_{k\to+\infty}
\int_{B_{\rho/2}(x_0)}\big(\varphi_k(x_k)-\varphi_k(y)\big)K(x_k,y)\,dy
=\int_{B_{\rho/2}(x_0)}\big(\varphi(x_0)-\varphi(y)\big)K(x_0,y)\,dy
.\end{equation}
Now, for every~$y\in \R^n\setminus B_{\rho/2}(x_0)$ we define
\begin{equation*} \eta_k(y):=\big(\varphi_k(x_k)-\varphi_k(y)\big)K(x_k,y).\end{equation*}
We observe that, if~$k$ is sufficiently large,
for every~$y\in B_3\setminus B_{\rho/2}(x_0)$,
\begin{equation*} |\eta_k(y)|\le C\,\sup_{x\in B_3\setminus B_{\rho/4}(y)}|K(x,y)| ,\end{equation*}
which belongs to~$L^1(B_\rho(x_0)\setminus B_{\rho/2}(x_0))$ due to~\eqref{locint-RINF-PRE}.

Consequently, by~\eqref{CONZTINY} and the Dominated Convergence Theorem,
\begin{equation*}\begin{split}&
\lim_{k\to+\infty}
\int_{B_\rho(x_0)\setminus B_{\rho/2}(x_0)}\big(\varphi_k(x_k)-\varphi_k(y)\big)K(x_k,y)\,dy=
\lim_{k\to+\infty}
\int_{B_\rho(x_0)\setminus B_{\rho/2}(x_0)}\eta_k(y)\,dy\\&\qquad
=
\int_{B_\rho(x_0)\setminus B_{\rho/2}(x_0)}\lim_{k\to+\infty}
\eta_k(y)\,dy
=\int_{B_\rho(x_0)\setminus B_{\rho/2}(x_0)}\big(\varphi(x_0)-\varphi(y)\big)K(x_0,y)\,dy
.\end{split}\end{equation*}
This and~\eqref{ST:001-BMEZZI}
show the validity of~\eqref{ST:001v}.

Having completed the proof of~\eqref{ST:001v}, we now claim that
\begin{equation}\label{ST:002}
\lim_{k\to+\infty}
\int_{\R^n\setminus B_\rho(x_0)}\big(\varphi_k(x_k)-\varphi_k(y)\big)K(x_k,y)\,dy
=\int_{\R^n\setminus B_\rho(x_0)}\big(\varphi(x_0)-\varphi(y)\big)K(x_0,y)\,dy
.\end{equation}

Indeed, for all~$y\in\R^n\setminus \overline{B_\rho(x_0)}$ we set
$$ \mu_k(y):=\big(\varphi_k(x_k)-\varphi_k(y)\big)K(x_k,y)=
\big(\varphi(x_k)-\sqrt{\varepsilon_k}|x_k-x_0|^2-u_k(y)\big)K(x_k,y).$$
If~$y\in B_3\setminus \overline{B_\rho(x_0)}$
and~$k$ is sufficiently large, then, by~\eqref{CONUKv},
\begin{eqnarray*}
|\mu_k(y)|\le C\,(1+|u(y)|) \,|K(x_k,y)|\le
C\,(1+\|u\|_{L^\infty(B_3)}) \,\sup_{x\in B_3\setminus B_{\rho/2}(y)}
|K(x,y)|
\end{eqnarray*}
and the latter function belongs to~$L^1(B_3\setminus \overline{B_\rho(x_0)})$
owing to~\eqref{locint-RINF-PRE}.

This, \eqref{CONZTINY}, \eqref{CONUKv} and the Dominated Convergence Theorem
lead to
\begin{equation}\label{TGBDIUNDG9TGRSEI-O}
\begin{split}&
\lim_{k\to+\infty}\int_{B_3\setminus{B_\rho(x_0)}}\big(\varphi_k(x_k)-\varphi_k(y)\big)K(x_k,y)\,dy\,\\&\qquad=\,
\int_{B_3\setminus{B_\rho(x_0)}}\lim_{k\to+\infty}
\big(\varphi(x_k)-\sqrt{\varepsilon_k}|x_k-x_0|^2-u_k(y)\big)K(x_k,y)
\,dy\\&\qquad=\,\int_{B_3\setminus{B_\rho(x_0)}}
\big(\varphi(x_0)-u(y)\big)K(x_0,y)
\,dy.
\end{split}\end{equation}

In light of~\eqref{mcondv} and~\eqref{CONUK2v} we also remark that, for large~$k$,
\begin{equation}\label{ILBOPUN}
\begin{split}&
\int_{\R^n\setminus B_3} |u_k(y)|\sup_{{x\in B_1}}
K(x,y)\,dy\\&\qquad\le \int_{\mathbb{R}^n\setminus B_3}\big|u(y)-u_k(y)
\big| \sup_{x\in B_1}K(x,y) \,dy
+\int_{\R^n\setminus B_3} |u(y)|\sup_{{x\in B_1}}
K(x,y)\,dy\\&\qquad\le1+
\int_{\R^n\setminus B_3} |u(y)|\sup_{{x\in B_1}}
K(x,y)\,dy\le C,\end{split}\end{equation}
up to renaming~$C$ once again.
Furthermore, if~$y\in\R^n\setminus B_3$ and~$\delta_k:=\big|\varphi(x_k)-\varphi(x_0)\big|+\sqrt{\varepsilon_k}$,
we have that~$\delta_k$ is infinitesimal as~$k\to+\infty$ and
\begin{eqnarray*}&&
\Big| \big(\varphi_k(x_k)-\varphi_k(y)\big)K(x_k,y)
-\big(\varphi(x_0)-u(y)\big)
K(x_0,y)\Big|\\&\le& 
\Big(
\big|\varphi(x_k)-\varphi(x_0)\big|+\sqrt{\varepsilon_k}+|u_k(y)-u(y)|\Big)
|K(x_k,y)|
+\big|\varphi(x_0)-u(y)\big|\,\big|K(x_0,y)-K(x_k,y)\big|\\
&\le&\delta_k \sup_{x\in B_1}|K(x,y)| +|u_k(y)-u(y)|\sup_{x\in B_1}
|K(x,y)|+
C\,(1+|u(y)|)\,\big|K(x_0,y)-K(x_k,y)\big|\\
&\le&\delta_k \sup_{x\in B_1\setminus B_1(y)}|K(x,y)|
+|u_k(y)-u(y)|\sup_{x\in B_1}|K(x,y)|+
C\,(1+|u(y)|)\,\big|K(x_0,y)-K(x_k,y)\big|.
\end{eqnarray*}
Gathering this information, \eqref{locint-RINF-PRE} and~\eqref{CONUK2v}, we find that
\begin{equation}\label{DIUIPN7ujGyjhmEAFS}\begin{split}&
\lim_{k\to+\infty}
\int_{\R^n\setminus B_3}\Big| \big(\varphi_k(x_k)-\varphi_k(y)\big)K(x_k,y)-\big(\varphi(x_0)-u(y)\big)K(x_0,y)\Big|\,dy\\&\qquad\le
\lim_{k\to+\infty}
C\int_{\R^n\setminus B_3}(1+|u(y)|)\,\big|K(x_0,y)-K(x_k,y)\big|\,dy.
\end{split}
\end{equation}
We also point out that, if~$y\in\R^n\setminus B_3$,
$$ (1+|u(y)|)\,\big|K(x_0,y)-K(x_k,y)\big|\le
2(1+|u(y)|)\,\sup_{x\in B_1\setminus B_1(y)} |K(x,y)|$$
and the latter function belongs to~$L^1(\R^n\setminus B_3)$, thanks to~\eqref{mcondv}
and~\eqref{locint-RINF-PRE}. The Dominated Convergence Theorem and~\eqref{CONZTINY}
thereby give that
$$ \lim_{k\to+\infty}
\int_{\R^n\setminus B_3}(1+|u(y)|)\,\big|K(x_0,y)-K(x_k,y)\big|\,dy
=0.$$
This and~\eqref{DIUIPN7ujGyjhmEAFS} yield that
$$ \lim_{k\to+\infty}
\int_{\R^n\setminus B_3}\Big| \big(\varphi_k(x_k)-\varphi_k(y)\big)K(x_k,y)-\big(\varphi(x_0)-u(y)\big)K(x_0,y)\Big|\,dy=0,$$
which, combined with~\eqref{TGBDIUNDG9TGRSEI-O}, proves~\eqref{ST:002}.

By combining~\eqref{ST:001v} and~\eqref{ST:002}, we deduce that~$A \varphi_k(x_k)\to A \varphi(x_0)$ as~$k\to+\infty$.
As a result, by passing to the limit in~\eqref{MDDMR}, we conclude that~$A \varphi (x_0)\ge f(x_0)$.

Similarly, one sees that if the function~$\varphi-u$ has a local minimum at~$x_0$ then~$A \varphi (x_0)\le f(x_0)$.
\end{proof}

\begin{corollary} \label{corequivv} Let~$\vartheta\in[0,2]$.
Let~$K\in\mathcal{K}_{0,\vartheta}^+$ 
satisfying~\eqref{CONZTINY}, \eqref{locint-RINF-PRE} and~\eqref{locint-RINF}
and~$u \in \mathcal{V}_{K}$. Let $f$ be bounded and continuous in $B_1$.

Then
$$Au=f {\mbox{ in }}B_1$$
in viscosity sense is equivalent to
$$Au\stackrel{0}{=}f {\mbox{ in }}B_1$$
in the viscosity sense of Definition~\ref{defv}.
\end{corollary}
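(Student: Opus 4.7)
The plan is to follow the architecture of the pointwise equivalence in Corollary~\ref{corequiv}, replacing pointwise manipulations by test-function arguments and substituting Lemma~\ref{lemCSv} for Lemma~\ref{lemCS} in the ``easy'' direction.

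\emph{From $Au\stackrel{0}{=}f$ in the viscosity sense to $Au=f$ in the viscosity sense.} Since $m=0$, the polynomials $P_R$ in Definition~\ref{defv} vanish identically, so the hypothesis reduces to $A(\chi_R u)=f_R$ in $B_1$ in the viscosity sense, with $f_R\to f$ uniformly in $B_1$. I will apply Lemma~\ref{lemCSv} to the sequence $u_k:=\chi_{R_k}u$ for any $R_k\to+\infty$ (with $R_k>4$), taking $f_k:=f_{R_k}$. The uniform convergence of $u_k$ to $u$ on $B_4$ is immediate because $\chi_{R_k}\equiv 1$ on $B_4$ for large $k$, and the tail condition~\eqref{CONUK2v} reduces to
$$\int_{\mathbb{R}^n\setminus B_{R_k}}|u(y)|\sup_{x\in B_1}K(x,y)\,dy\longrightarrow 0,$$
which follows from~\eqref{mcondv} applied with $m=0$.

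\emph{From $Au=f$ in the viscosity sense to $Au\stackrel{0}{=}f$ in the viscosity sense.} Guided by the pointwise proof of Corollary~\ref{corequiv}, I set
$$f_R(x):=f(x)+\int_{\mathbb{R}^n}(1-\chi_R(y))\,u(y)\,K(x,y)\,dy,\qquad x\in B_1.$$
Continuity of $f_R$ in $B_1$ follows from~\eqref{CONZTINY} via Dominated Convergence with the majorant furnished by~\eqref{mcondv}, while $\|f_R-f\|_{L^\infty(B_1)}\le\int_{\mathbb{R}^n\setminus B_R}|u(y)|\sup_{x\in B_1}K(x,y)\,dy\to 0$ by the same integrability. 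The crucial point is then to establish $A(\chi_R u)=f_R$ in $B_1$ in the viscosity sense. I carry this out by lifting test functions: if $\varphi$ is a $C^2$ function touching $\chi_R u$ from above at some $x_0\in B_1$ in a ball $B_\delta(x_0)\Subset B_R$, form the standard nonlocal test function $\tilde\varphi$ (equal to $\varphi$ on $B_\delta(x_0)$ and to $\chi_R u$ elsewhere) and set $\Phi:=\tilde\varphi+(1-\chi_R)u$. Since $\chi_R\equiv 1$ on $B_\delta(x_0)$, the function $\Phi$ equals $\varphi$ on $B_\delta(x_0)$ and equals $u$ outside, so $\Phi$ is a bona fide nonlocal test function for $u$ touching from above at $x_0$. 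Combining $A\Phi(x_0)\le f(x_0)$ with the absolutely convergent linear decomposition
$$A\Phi(x_0)=A\tilde\varphi(x_0)-\int_{\mathbb{R}^n}(1-\chi_R(y))\,u(y)\,K(x_0,y)\,dy=A\tilde\varphi(x_0)-\bigl(f_R(x_0)-f(x_0)\bigr)$$
yields $A\tilde\varphi(x_0)\le f_R(x_0)$, as required; the supersolution inequality is entirely symmetric.

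\emph{Main obstacle.} The delicate step is the second direction, where I must confirm that the test-function lifting is compatible with the viscosity definition and that $A$ splits linearly as above. This hinges on two points: first, that $\chi_R\equiv 1$ on a full neighborhood of $x_0$, so the correction $(1-\chi_R)u$ stays away from the diagonal singularity of $K$ and contributes only a continuous function of $x$; second, on the absolute convergence of every integral involved, guaranteed by $u\in\mathcal{V}_K$ together with~\eqref{mcondv} at $m=0$. Once these are in place, the remainder amounts to routine bookkeeping.
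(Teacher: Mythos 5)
Your proof is correct and follows essentially the same route as the paper: in one direction you lift test functions for $\chi_R u$ to test functions for $u$ via the identity $\Phi=\tilde\varphi+(1-\chi_R)u$ and use linearity of $A$ on absolutely convergent integrals (the paper calls your $\tilde\varphi$ simply $\varphi$ and your $\Phi$ is its $\psi$, but the computation and the choice $f_R(x)=f(x)+\int_{B_R^c}u(y)K(x,y)\,dy$ are identical); in the other direction you invoke Lemma~\ref{lemCSv} with $u_k=\chi_{R_k}u$, $f_k=f_{R_k}$, verifying~\eqref{CONUK2v} from~\eqref{mcondv} with $m=0$, exactly as the paper does. Your explicit remark that continuity of $f_R$ must be checked (via~\eqref{CONZTINY} and Dominated Convergence with majorant from~\eqref{mcondv}) is a worthwhile addition: the paper's proof asserts uniform convergence $f_R\to f$ but leaves continuity of $f_R$ implicit, even though Definition~\ref{defv} requires it.
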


\begin{proof} Suppose first that~$Au=f$ in~$B_1$
in viscosity sense. For every~$R>5$, we define
$$f_R(x):=f(x)+\int_{B_R^c}u(y)K(x,y)\,dy.$$
Notice that
$$ \sup_{x\in B_1} |f_R(x)-f(x)|
\le\int_{B_R^c}|u(y)|\sup_{x\in B_1} K(x,y)\,dy
,$$
which is infinitesimal, thanks to~\eqref{mcondv} (used here with~$m:=0$),
and thus~$f_R$ converges to~$f$ uniformly in~$B_1$.

Our objective is to prove that~$A(\chi_R u)=f_R$ in~$B_1$ in the viscosity sense
(from which we obtain that~$Au\stackrel{0}{=}f$ in~$B_1$
in the viscosity sense of Definition~\ref{defv}).

To check this claim, we pick a point~$x_0\in B_1$ and touch~$\chi_R u$
from below by a test function~$\varphi$ at~$x_0$, with~$\varphi=\chi_R u$
outside~$B_2$.
We define~$\psi:=\varphi+(1-\chi_R)u$ and we observe that~$\psi$
touches~$u$ by below at~$x_0$ and that~$\psi=u$ outside~$B_2$.
As a result, $A\psi(x_0)\ge f(x_0)$ and therefore
\begin{eqnarray*}
f_R(x_0)&=& f(x_0)+\int_{B_R^c}u(y)K(x_0,y)\,dy\\
&\le&A\psi(x_0)+\int_{B_R^c}u(y)K(x_0,y)\,dy\\
&=&\int_{\R^n}(\psi(x_0)-\psi(y))K(x_0,y)\,dy
+\int_{B_R^c}u(y)K(x_0,y)\,dy\\
&=&\int_{\R^n}(\varphi(x_0)-\psi(y))K(x_0,y)\,dy
+\int_{B_R^c}u(y)K(x_0,y)\,dy\\
&=&\int_{B_R}(\varphi(x_0)-\varphi(y))K(x_0,y)\,dy+
\int_{B_R^c}(\varphi(x_0)-\psi(y))K(x_0,y)\,dy
+\int_{B_R^c}u(y)K(x_0,y)\,dy\\&=&A\varphi(x_0)+
\int_{B_R^c}(\varphi(y)-\psi(y))K(x_0,y)\,dy
+\int_{B_R^c}u(y)K(x_0,y)\,dy\\&=&
A\varphi(x_0).
\end{eqnarray*}
Similarly, if~$\varphi$ touches~$\chi_R u$
from above, then~$A \varphi(x_0)\le f_R(x_0)$.
These observations entail that~$A(\chi_R u)=f_R$ in~$B_1$ in the viscosity sense.

This proves one of the implications of Corollary~\ref{corequivv}.
To prove the other, we assume now that~$Au\stackrel{0}{=}f$ in~$B_1$
in the viscosity sense of Definition~\ref{defv}.
Then, we find~$f_R: B_1 \to \R$ such that~$
A(\chi_R u)=f_R$
in $B_1$ in viscosity sense, with~$f_R$ converging
to~$f$ uniformly in~$B_1$.
We remark that
\begin{eqnarray*}
\lim_{R\to+\infty} \int_{\mathbb{R}^n\setminus B_3}\big|u(y)-(\chi_R u)(y)
\big| \sup_{x\in B_1}|K(x,y)|\,dy=0,
\end{eqnarray*}
thanks to~\eqref{mcondv} (used here with~$m:=0$) and the Dominated Convergence Theorem.

We can therefore apply Lemma~\ref{lemCSv} and conclude that~$Au=f$
in~$B_1$ in the sense of viscosity, as desired.
\end{proof}

In the next result we state the viscosity counterpart
of Lemma~\ref{lemmaj} (its proof is omitted since
it is similar to the one of Lemma~\ref{lemmaj}, just noticing that
the functions~$v$ and~$\chi_R w-\chi_4 w$ vanish in~$B_4$,
hence the viscous and pointwise setting would equally apply to them).

\begin{lemma} \label{lemmajv} 
Let $j, m \in \N_0$, with~$j\le m$, $\vartheta\in[0,2]$
and $K \in \mathcal{K}_{j,\vartheta}^+\cap \mathcal{K}_{m,\vartheta}^+$.
Let~$f$ be bounded and continuous in~$B_1$ and let~$u \in \mathcal{V}_{K}$
such that
\begin{equation}\label{d3ivbrgretgyt57tv}
Au\stackrel{m}{=}f\end{equation} in $B_1$. 

Then, there exist a function $\bar f$ and a polynomial $P$ of degree at most $m-1$
such that~$\bar f=f+P$ and~$Au\stackrel{j}{=}\bar f$ in $B_1$.
\end{lemma}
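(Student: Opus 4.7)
The proof mirrors that of Lemma~\ref{lemmaj}, the crucial adaptation being that all operator computations on auxiliary functions are carried out on functions that vanish in~$B_4$, so that Lemma~\ref{EQUIBVA} allows us to pass freely between the pointwise and viscosity settings. Set $v:=(1-\chi_4)u$ and $w:=\chi_4 u$, so that $u=v+w$, $v\equiv 0$ in~$B_4$ (hence $v\in\mathcal{C}_{\vartheta,K}$), and $\chi_R u-\chi_R v=\chi_4 u=w$ for every $R\ge 4$.

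Since $v\in\mathcal{C}_{\vartheta,K}$ and both $f_{1,v}$ and $f_{2,v}$ vanish on~$B_1$ (as~$v\equiv 0$ in~$B_3$), Remark~\ref{ifsmooth} (applicable because~$\mathcal{K}_{j,\vartheta}^+\subseteq\mathcal{K}_{j,\vartheta}$) together with~\eqref{repr2}--\eqref{f3} gives the pointwise decomposition
\begin{equation*}
A(\chi_R v)=f_v+\tilde\varphi_R+Q_R\qquad\text{in }B_1,\qquad f_v(x):=\int_{B_4^c}u(y)\,\psi(x,y)\,dy,
\end{equation*}
with $\deg Q_R\le j-1$ and $\tilde\varphi_R\to 0$ \emph{uniformly} on~$B_1$ (the uniformity is the content of the footnote to Corollary~\ref{corlim} together with the vanishing of the obstruction polynomial identified in~\eqref{sjiery48674895}). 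As~$v$ vanishes in~$B_4$, Lemma~\ref{EQUIBVA} upgrades this to a viscosity identity as well. On the other hand, the hypothesis~\eqref{d3ivbrgretgyt57tv} combined with Definition~\ref{defv} supplies $A(\chi_R u)=f+\varphi_R+P_R$ in~$B_1$ in the viscosity sense, with $\deg P_R\le m-1$ and $\varphi_R\to 0$ uniformly.

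To derive a viscosity identity for~$w$, fix~$R>4$ and $x_0\in B_1$, and let $\eta\in C^2$ touch~$w$ from above at~$x_0$ with $\eta=w$ outside a small ball $B_\rho(x_0)\Subset B_1$. Define $\phi:=\eta+\chi_R v$: since $\chi_R v\equiv 0$ in $B_\rho(x_0)\subset B_4$, the function~$\phi$ is smooth at~$x_0$, equals $\chi_R u$ outside~$B_\rho(x_0)$, and touches~$\chi_R u$ from above at~$x_0$. The viscosity hypothesis then gives $A\phi(x_0)\ge (f+\varphi_R+P_R)(x_0)$, and pointwise linearity of the P.V. integral (legitimate since both summands are integrable against $K(x_0,\cdot)$) combined with the pointwise identity for $A(\chi_R v)$ yields
\begin{equation*}
A\eta(x_0)\ge \bigl(f-f_v+\varphi_R-\tilde\varphi_R+P_R-Q_R\bigr)(x_0).
\end{equation*}
The symmetric argument from below gives the reverse inequality, so $Aw=(f-f_v)+(\varphi_R-\tilde\varphi_R)+(P_R-Q_R)$ in~$B_1$ in the viscosity sense, for every $R>4$.

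The left-hand side being independent of~$R$ and $\varphi_R-\tilde\varphi_R\to 0$ uniformly, the polynomials $P_R-Q_R$ (of degree at most~$m-1$) remain uniformly bounded on~$B_1$; exploiting the finite-dimensionality of the space of such polynomials and the stability of viscosity equations under uniform convergence (Lemma~\ref{lemCSv}), $P_R-Q_R$ converges uniformly on~$B_1$ to a polynomial~$P$ with $\deg P\le m-1$, via Lemma~2.1 of~\cite{MR3988080}. Setting $\bar f:=f+P$ and $S_R:=\varphi_R+P_R-Q_R-P$, we obtain $S_R\to 0$ uniformly in~$B_1$ and $A(\chi_R u)=\bar f+S_R+Q_R$ in~$B_1$ in the viscosity sense, which (since $\deg Q_R\le j-1$) is exactly $Au\stackrel{j}{=}\bar f$ in the sense of Definition~\ref{defv}. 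The main obstacle is the subtraction step producing the viscosity identity for~$w$: the additive shift $\phi=\eta+\chi_R v$ passes unharmed through the viscosity test precisely because $\chi_R v$ vanishes in~$B_4$, which is the content of the authors' hint.
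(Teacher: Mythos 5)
Your overall strategy correctly follows the paper's hint: decompose $u=v+w$ with $v=(1-\chi_4)u$ (so that $A(\chi_R v)$ is pointwise) and use the additive-shift trick on test functions (which works because $\chi_R v\equiv 0$ in $B_4$, so it can be added to a $C^2$ test for $w$ without disturbing smoothness or the touching). The derivation that $Aw = (f-f_v)+(\varphi_R-\tilde\varphi_R)+(P_R-Q_R)$ in the viscosity sense is sound (apart from a cosmetic slip: with the paper's convention, as in the proof of Lemma~\ref{EQUIBVA}, a test function touching \emph{from above} gives $A\phi(x_0)\le$, not $\ge$, but you correctly note that both directions of touching are used, so this does not affect the structure).

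The gap is in the final step, where you assert that $P_R-Q_R$ converges. In the pointwise proof of Lemma~\ref{lemmaj}, this follows because $A(\chi_R w)(x)=A(\chi_4 w)(x)$ is a genuine, $R$-independent \emph{function} obtained by evaluating a convergent integral; equating it to $(f-f_v)+(\varphi_R-\tilde\varphi_R)+(P_R-Q_R)$ pointwise immediately forces $P_R-Q_R$ to converge, and only then does Lemma~2.1 of~\cite{MR3988080} identify the limit as a polynomial. In the viscosity setting, $w=\chi_4 u$ lies only in $C(B_4)\cap L^\infty(B_4)$, so $A(\chi_4 w)(x)$ need not exist pointwise, and the family of viscosity identities $Aw=h_R$ for different $R$ does \emph{not} force $h_R=h_{R'}$: at any given test point one only obtains the two-sided sandwich $\sup_{\text{above}}A\eta(x_0)\le h_R(x_0)\le\inf_{\text{below}}A\eta(x_0)$, which is compatible with distinct right-hand sides (and test points may be sparse for a merely continuous $w$). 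Your "uniformly bounded + finite-dimensional" observation gives at best subsequential precompactness, not convergence of the whole family, and Lemma~\ref{lemCSv} only passes a \emph{given} uniformly convergent sequence of equations to the limit; it cannot select the limit. What is really needed is a viscosity analogue of the uniqueness statement in Lemma~\ref{lemma:uniq} (whose proof in the paper is purely pointwise, via evaluating $A(\chi_R u)(x)$), and neither your argument nor the paper supplies one. Until this is addressed—e.g.\ by proving a viscosity uniqueness/comparison statement for $A(\chi_4 u)$, or by establishing enough regularity of $w$ to invoke Lemma~\ref{EQUIBVA} and reduce to the pointwise Lemma~\ref{lemmaj}—the convergence of $P_R-Q_R$ is not justified.
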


Next, as a possible application, we show a
specific case in which the existence of solution to a Dirichlet problem is guaranteed. For this, we consider a
family of kernels comparable to the fractional Laplace operator, as follows.
For any $s \in (0,1)$, given real numbers $\Lambda\ge \lambda >0$, we consider the family of
kernels~$K$ as defined in~\eqref{FL}. We suppose that
\begin{equation}\label{emmecond}
{\mbox{there exists~$m\in\N_0$ such that condition~\eqref{taylor} is satisfied.}}
\end{equation}
We also assume that condition~\eqref{assym} holds true
and that~$K$ is
translation invariant, i.e. 
\begin{equation}\label{4.30BIS}
{\mbox{$K(x+z,y+z)=K(x,y)$ for any $x$, $y$, $z \in \R^n$.}}\end{equation}
With this, we have that~$K$ belongs to $\mathcal{K}_{m,\vartheta}^+$, with~$m$ as in~\eqref{emmecond}
and for every~$\vartheta\in(2s,2]$. Moreover, it also satisfies~\eqref{locint-RINF-PRE} and~\eqref{locint-RINF}.

We introduce the fractional Sobolev space
$$\mathbb{H} ^s(B_1):=\left\{ u \in L^2(B_1): \iint_{\R^{2n}\setminus (B_1^c\times B_1^c)} (u(x)-u(y))^2 
K(x,y)\,dy <+\infty \right\}$$
and, given~$g\in \mathbb{H} ^s(B_1)$,  the class 
$$J_g(B_1):=\Big\{ u \in \mathbb{H}^s(B_1):\; u=g \; {\mbox{ in }} B_1^c\Big\}.$$
We use this class to seek solutions to the Dirichlet problem (see~\cite{P18}).
More precisely, the following result can be proved by using the Direct Methods of the Calculus of Variations and
the strict convexity of the functional.

\begin{proposition} \label{pala}
Let $K$ be as in \eqref{FL}, 
\eqref{assym},
\eqref{emmecond}
and~\eqref{4.30BIS}.
Let~$f\in L^2(B_1)$ and $g \in \mathbb{H}^s(B_1)$.
Then, there exists a unique minimizer of the functional
\begin{equation} \label{energy}
\mathcal{E}(u):=\frac{1}{4}\iint_{\R^n\times\R^n} (u(x)-u(y))^2 K(x,y)\,dx\,dy-\int_{B_1}f(x)u(x)\,dx
\end{equation}
over $J_g(B_1)$.

In addition, $u \in J_g(B_1)$ is a minimizer of \eqref{energy} over $J_g(B_1)$ if and only if it is a weak solution of 
\begin{equation} \label{weak}
\begin{cases}
Au=f& {\mbox{ in }}B_1,\\
u=g& \mbox{ in } B_1^c,
\end{cases}
\end{equation}
that is, for every~$\phi\in C^\infty_0(B_1)$,
$$ \frac12\iint_{\R^n\times\R^n}\big(u(x)-u(y)\big)\big(\phi(x)-\phi(y)\big)\,K(x,y)\,dx\,dy=\int_{B_1} f(x)\phi(x)\,dx.$$
\end{proposition}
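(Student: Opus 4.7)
The plan is to apply the Direct Method of the Calculus of Variations in combination with the strict convexity of the quadratic part of the energy. Since $u=g$ on $B_1^c$ for every $u\in J_g(B_1)$, the contribution
$$\iint_{B_1^c\times B_1^c}(u(x)-u(y))^2K(x,y)\,dx\,dy$$
depends only on $g$ and plays no role in the minimization; indeed it may a priori be infinite, so I would work from the outset with the reduced functional
$$\tilde{\mathcal E}(u):=\frac14\iint_{\R^{2n}\setminus(B_1^c\times B_1^c)}\bigl(u(x)-u(y)\bigr)^2K(x,y)\,dx\,dy-\int_{B_1}f u\,dx,$$
which is finite on $J_g(B_1)$ because $g\in\mathbb{H}^s(B_1)$ and because $v:=u-g$ vanishes outside $B_1$. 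The functional $\tilde{\mathcal E}$ is the sum of a nonnegative, strictly convex quadratic form $Q$ and a continuous linear functional, so uniqueness will be automatic once existence is established, and minimizers of $\tilde{\mathcal E}$ and $\mathcal E$ coincide.

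For existence I would pass to $v:=u-g$ and regard $\tilde{\mathcal E}$ as a functional on the Hilbert space $\mathbb{H}^s_0(B_1):=\{v\in\mathbb{H}^s(B_1):\,v=0\text{ in }B_1^c\}$. The lower bound $K(x,y)\ge\lambda|x-y|^{-n-2s}$ from \eqref{FL} implies that $Q(v)$ dominates the Gagliardo seminorm
$$[v]_s^2:=\iint_{\R^{2n}}\frac{(v(x)-v(y))^2}{|x-y|^{n+2s}}\,dx\,dy,$$
and the standard fractional Poincar\'e inequality for functions supported in $\overline{B_1}$ then gives $Q(v)\ge c\,\|v\|_{\mathbb{H}^s_0(B_1)}^2$. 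Controlling the linear part by Cauchy--Schwarz and Young's inequality yields coercivity of $v\mapsto\tilde{\mathcal E}(v+g)$ on $\mathbb{H}^s_0(B_1)$. Taking a minimizing sequence and extracting a weak limit, I would observe that $Q$, being convex and continuous, is weakly lower semicontinuous, while the linear term $\int_{B_1}f u$ is weakly continuous thanks to the compact embedding $\mathbb{H}^s_0(B_1)\hookrightarrow L^2(B_1)$. This produces a minimizer $u\in J_g(B_1)$.

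Uniqueness will follow from the parallelogram-type identity
$$\tilde{\mathcal E}\!\left(\tfrac{u_1+u_2}{2}\right)=\tfrac12\tilde{\mathcal E}(u_1)+\tfrac12\tilde{\mathcal E}(u_2)-\tfrac14\,Q(u_1-u_2),$$
valid for any two admissible competitors $u_1,u_2$: if both are minimizers and their average is admissible, the additional term must vanish, forcing $u_1-u_2$ to be a.e.\ constant on $\R^n$; since it is zero on $B_1^c$, it vanishes identically. Finally, for the Euler--Lagrange characterization I would test with an admissible perturbation: for $\phi\in C^\infty_0(B_1)$ and $t\in\R$, the function $u+t\phi$ lies in $J_g(B_1)$, and $t\mapsto\tilde{\mathcal E}(u+t\phi)$ is a smooth quadratic polynomial in $t$. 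Imposing that its derivative at $t=0$ vanishes and symmetrizing via the symmetry of $K$ produces exactly the weak formulation in \eqref{weak}; conversely, any $u\in J_g(B_1)$ satisfying this formulation is a critical point of the strictly convex $\tilde{\mathcal E}$, hence the (unique) minimizer.

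The main obstacle I anticipate is the coercivity step, which relies on the fractional Poincar\'e inequality for functions in $\mathbb{H}^s_0(B_1)$ together with the lower bound in \eqref{FL}; the rest is a routine execution of the Direct Method once the ambient functional space has been correctly identified and the possibly infinite ``outside--outside'' contribution has been disposed of as above.
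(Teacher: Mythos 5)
Your proposal is correct and executes exactly the strategy the paper merely alludes to in one sentence (``the following result can be proved by using the Direct Methods of the Calculus of Variations and the strict convexity of the functional''): coercivity via the lower bound in \eqref{FL} plus the fractional Poincar\'e inequality, weak lower semicontinuity of the convex quadratic form, uniqueness from strict convexity (equivalently your parallelogram identity, noting that $Q(v)=0$ with $v=0$ in $B_1^c$ forces $v\equiv 0$ because $K>0$), and the first-variation computation for the Euler--Lagrange characterization. The preliminary reduction to the functional with the ``outside--outside'' block removed is also the right move, since that block is a fixed (possibly infinite) constant over $J_g(B_1)$ and the cross term with a test function $\phi\in C^\infty_0(B_1)$ is unaffected.
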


The next result is a generalization of Theorem 2 in \cite{SV14}, which shows the global continuity of 
weak solutions of an equation which includes the operator of our interest.

\begin{proposition} \label{contsol}
Let $K$ be as in~\eqref{FL}
\eqref{assym},
\eqref{emmecond}
and~\eqref{4.30BIS}. Let $f \in L^\infty(B_1)$ and~$g\in C^\alpha(\R^n)$
for some~$\alpha\in(0,\min\{2s,1\})$. Assume that
$$ |g(x)|\le C|x|^\alpha\qquad{\mbox{ for all }}x\in\R^n\setminus B_1.$$
Let also $u \in J_g(B_1)$ be a weak solution of 
\begin{equation} \label{weak2}
Au=f\quad {\mbox{ in }}B_1.
\end{equation}
Then, $u \in C(\R^n)$.
\end{proposition}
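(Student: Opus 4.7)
The plan is to establish continuity of $u$ on $\R^n$ in three separate regions: in the exterior of $B_1$, where $u \equiv g$ is immediately continuous since $g \in C^\alpha(\R^n)$; in the interior of $B_1$, by interior H\"older regularity; and finally across $\partial B_1$, where the essential work lies and an explicit barrier construction is required.

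For the interior, since $f$ is bounded and $K$ is comparable to the kernel of the fractional Laplacian by~\eqref{FL}, the equation in~\eqref{weak2} falls under the De Giorgi--Nash--Moser theory for elliptic nonlocal operators. This yields $u \in C^{\beta}_{{\rm loc}}(B_1)$ for some exponent $\beta = \beta(n, s, \lambda, \Lambda) \in (0, 2s)$, along the lines of the interior estimates recalled in~\cite{SV14} and the references therein. Thus the continuity of $u$ on $\R^n$ reduces to showing that, for every $x_0 \in \partial B_1$, one has $u(x) \to g(x_0)$ as $x \to x_0$.

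Fix $x_0 \in \partial B_1$ and, subtracting the constant $g(x_0)$ from both $u$ and $g$ (which does not alter the equation, since $A$ annihilates constants), we may assume $g(x_0) = 0$. Combining the local H\"older continuity of $g$ near $x_0$ with the global bound $|g(y)| \le C|y|^\alpha$ yields
\[ |g(y)| \le C\,|y - x_0|^{\alpha} \qquad \text{for every } y \in \R^n \setminus B_1. \]
For each small $\rho > 0$, we then seek a supersolution $\Phi_\rho: \R^n \to \R$ satisfying $\Phi_\rho \ge |g|$ in $B_1^c$, $A\Phi_\rho \ge \|f\|_{L^\infty(B_1)}$ weakly on $B_1 \cap B_\rho(x_0)$, and $\Phi_\rho(x) \to 0$ as $x \to x_0$. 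A natural candidate is built from a truncation of $|y - x_0|^\alpha$ combined with a corrective term supported near $x_0$ that provides the required positivity of $A\Phi_\rho$; the pointwise estimate reduces, via the bounds in~\eqref{FL}, to the explicit computation of $A$ applied to $|y - x_0|^\alpha$, which is finite precisely because $\alpha < 2s$. The weak comparison principle, available since $K \ge 0$ and hinging on the bilinear formulation of Proposition~\ref{pala}, gives $u \le \Phi_\rho$ in $B_1 \cap B_\rho(x_0)$; the symmetric argument with $-\Phi_\rho$ yields a matching lower bound, so $u(x) \to 0 = g(x_0)$ as $x \to x_0$, establishing continuity across $\partial B_1$.

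The main obstacle will be the barrier construction, since one must work with an abstract kernel $K$ satisfying only the two-sided comparability~\eqref{FL}, together with the symmetry~\eqref{assym} and the translation invariance~\eqref{4.30BIS}, rather than with the explicit pointwise formulas available for $(-\Delta)^s$. The assumption $\alpha < \min\{2s, 1\}$ is crucial here: the inequality $\alpha < 2s$ ensures the integrability required for the barrier estimate, while $\alpha < 1$ matches the H\"older class of $g$, allowing the same exponent to simultaneously control the exterior behavior of the datum and the nonlocal tail contributions arising in the estimate of $A\Phi_\rho$.
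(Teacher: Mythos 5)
Your overall architecture (continuity outside $B_1$ from $g\in C^\alpha$, interior H\"older estimates from the comparability~\eqref{FL}, and boundary continuity via barriers and the weak comparison principle) is sound in principle, and it is genuinely different from what the paper does. The paper does not construct barriers at all: it splits $u=v+w$ where $v$ solves $Av=f$ in $B_1$ with $v=0$ in $B_1^c$, and $w$ solves $Aw=0$ in $B_1$ with $w=g$ in $B_1^c$, and then quotes Proposition~7.2 of~\cite{R16} for the continuity of $v$ and Theorem~1.4 of~\cite{MR4135310} for the continuity of $w$. That decomposition is the whole point: each piece matches the hypotheses of an off-the-shelf boundary-regularity result, so no new estimate has to be produced. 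Your normalization $g(x_0)=0$ and the merged bound $|g(y)|\le C|y-x_0|^\alpha$ on $B_1^c$ are correct (the global $C^\alpha$ seminorm gives this directly), and invoking the weak comparison principle via the bilinear form of Proposition~\ref{pala} is legitimate since $K\geq 0$.

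The genuine gap is the one you yourself flag as ``the main obstacle'': the supersolution $\Phi_\rho$ is never exhibited. For a kernel that is only \emph{comparable} to $|x-y|^{-n-2s}$ rather than equal to it, you cannot simply compute $A(|y-x_0|^\alpha)$; you have to produce a function that is a pointwise (or viscosity/weak) supersolution for \emph{every} kernel obeying the two-sided bounds~\eqref{FL} and the symmetry~\eqref{assym}, which forces you to work with a Pucci-type extremal operator and to track the sign of the singular part, the behavior of the tail integral (this is where $\alpha<2s$ actually enters), and a localizing correction to absorb the $\|f\|_{L^\infty}$ term. None of this is carried out; ``a natural candidate is built from a truncation of $|y-x_0|^\alpha$ combined with a corrective term'' is a pointer to a computation, not a computation. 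This is exactly the content of the theorems the paper cites (in particular Theorem~1.4 of~\cite{MR4135310} is proved by precisely such a barrier argument under essentially your hypotheses), so as written your proposal replaces two citations by a promissory note for the hardest part of those cited proofs. To close the gap you would either need to perform the barrier estimate in full, or recognize --- as the paper does --- that it is cleaner to peel off the $f$-contribution and the $g$-contribution separately so that each piece falls squarely under an existing result.
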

\begin{proof}
First of all, we exploit
Proposition~\ref{pala} with~$g:=0$ to find a weak solution~$v$ of
$$ \begin{cases}
Av=f & {\mbox{ in }}B_1,\\
v=0& {\mbox{ in }}B_1^c.
\end{cases}$$
By Proposition~7.2 in~\cite{R16}
(see also~\cite{MR3293447} for related results), we have that~$v\in C(\R^n)$.

Let now~$w:=u-v$. We see that~$w$ is a weak solution of
$$ \begin{cases}
Aw=0 & {\mbox{ in }}B_1,\\
w=u=g& {\mbox{ in }}B_1^c.
\end{cases}$$
We thus exploit Theorem~1.4 in~\cite{MR4135310} and find that~$w\in C(\R^n)$.
{F}rom these observations, we find that~$u=v+w\in C(\R^n)$, as desired.
\end{proof}

With this, we can now prove that, in this setting, week solutions are also viscosity solutions.
 
\begin{proposition} \label{wisv}
Let $K$ be as in~\eqref{FL}
\eqref{assym}, \eqref{CONZTINY},
\eqref{emmecond}
and~\eqref{4.30BIS}. Let $f$ be bounded and continuous in~$B_1$ and~$g\in C^\alpha(\R^n)$
for some~$\alpha\in(0,\min\{2s,1\})$. Assume that
\begin{equation}\label{JS-CREg} |g(x)|\le C|x|^\alpha\qquad{\mbox{ for all }}x\in\R^n\setminus B_1.\end{equation}
Let also $u \in J_g(B_1)$ be a weak solution of 
\begin{equation} \label{weakd}
\begin{cases} 
Au = f&{\mbox{ in }} B_1,\\
u=g&{\mbox{ in }} B_1^c.
\end{cases}
\end{equation}
Then, $u$ is a viscosity solution of \eqref{weakd}.
\end{proposition}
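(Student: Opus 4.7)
The plan is to combine the continuity of $u$ (from Proposition~\ref{contsol}) with the weak formulation and a non-local strong minimum principle.

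By Proposition~\ref{contsol}, $u\in C(\R^n)$, so the test-function comparison used in Definition~\ref{defv} and in the proof of Lemma~\ref{lemCSv} is meaningful. I prove the viscosity subsolution property; the supersolution case is entirely symmetric. Fix $x_0\in B_1$ and take $\varphi\in C^2(\overline{B_\rho(x_0)})$ with $B_\rho(x_0)\Subset B_1$ such that $\varphi=u$ outside $B_\rho(x_0)$, $\varphi\geq u$ in $B_\rho(x_0)$ and $\varphi(x_0)=u(x_0)$; the goal is $A\varphi(x_0)\leq f(x_0)$. Since $\varphi$ is smooth in $B_\rho(x_0)$ and coincides outside with the polynomially growing function $u$ (by~\eqref{JS-CREg}), the kernel hypotheses~\eqref{FL}, \eqref{assym}, \eqref{CONZTINY}, \eqref{locint-RINF-PRE} and~\eqref{locint-RINF}, together with the translation invariance~\eqref{4.30BIS}, guarantee that $A\varphi$ is well-defined and continuous on a smaller ball $B_{\rho'}(x_0)$.

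Suppose for contradiction that $A\varphi(x_0)>f(x_0)$. By continuity there exist $\rho''\in(0,\rho')$ and $\varepsilon_0>0$ with $A\varphi-f\geq\varepsilon_0$ on $B_{\rho''}(x_0)$. For a non-negative $\eta\in C^\infty_0(B_{\rho''}(x_0))$, combining the weak identity of Proposition~\ref{pala} for $u$ with the integration-by-parts identity
$$\frac{1}{2}\iint_{\R^n\times\R^n}\big(\varphi(x)-\varphi(y)\big)\big(\eta(x)-\eta(y)\big)K(x,y)\,dx\,dy=\int_{B_1}A\varphi(x)\,\eta(x)\,dx,$$
which is valid since $\varphi$ is $C^2$ on the support of $\eta$ and $A\varphi$ is continuous there (and $K(x,y)=K(y,x)$ from~\eqref{assym} and~\eqref{4.30BIS}), I obtain
$$\frac{1}{2}\iint_{\R^n\times\R^n}\big(\psi(x)-\psi(y)\big)\big(\eta(x)-\eta(y)\big)K(x,y)\,dx\,dy=\int_{B_1}(A\varphi-f)\,\eta\,dx\geq\varepsilon_0\int_{B_1}\eta\,dx,$$
where $\psi:=\varphi-u\geq 0$. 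Since $\psi\in\mathbb{H}^s(B_1)$ is supported in $\overline{B_\rho(x_0)}$ and satisfies $\psi(x_0)=0$, this says that $\psi$ is a non-negative weak supersolution of $A\psi\geq\varepsilon_0$ in $B_{\rho''}(x_0)$.

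A non-local strong minimum principle (based on the lower bound $K(x,y)\geq\lambda|x-y|^{-n-2s}$ from~\eqref{FL}) then forces $\psi\equiv 0$ in $\R^n$, since the non-negative $\psi$ attains its minimum $0$ at the interior point $x_0$; this contradicts the preceding displayed inequality, whose right-hand side is strictly positive for non-trivial $\eta$ while the left-hand side vanishes. The main obstacle is to rigorously justify this strong minimum principle in the weak setting for the general kernels of~\eqref{FL}: at the minimum point one formally has $A\psi(x_0)=-\int\psi(y)K(x_0,y)\,dy\leq 0$, which conflicts with $A\psi\geq\varepsilon_0>0$, but making this argument rigorous for $\psi\in\mathbb{H}^s(B_1)$ requires a mollification or a suitable test-function argument. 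Such arguments are standard in non-local elliptic theory and extend here because only the positive lower bound of $K$ in~\eqref{FL} is used.
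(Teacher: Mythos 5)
Your proposal takes a genuinely different route from the paper, but it contains a real gap that you yourself flag as the ``main obstacle'': the argument hinges on a strong minimum principle for \emph{weak} supersolutions of nonlocal equations with merely measurable kernels satisfying~\eqref{FL}. You reduce the subsolution inequality to showing that the nonnegative function $\psi=\varphi-u\in\mathbb{H}^s(B_1)$, which vanishes at $x_0$ and satisfies $A\psi\geq\varepsilon_0>0$ weakly in a small ball, must be identically zero; but $\psi$ has no pointwise regularity at $x_0$ (it inherits only the continuity of $u$), so the formal computation $A\psi(x_0)=-\int\psi(y)K(x_0,y)\,dy\leq 0$ is not available. Making this rigorous requires tools (weak Harnack, De Giorgi--Nash--Moser iteration, or a barrier comparison for the whole kernel class in~\eqref{FL}) that are neither developed in the paper nor cited, and the claim that such arguments ``are standard and extend here'' is precisely the step that needs a proof. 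As written, the deduction that $\psi\equiv 0$ is unjustified.

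The paper's own proof sidesteps the strong minimum principle entirely by mollification and then passage to the limit inside the viscosity framework it has already built: one sets $u_\varepsilon=u\ast\rho_\varepsilon$, uses translation invariance~\eqref{4.30BIS} and Fubini to show that $Au_\varepsilon=f_\varepsilon$ holds weakly, then pointwise (because $u_\varepsilon$ is smooth and has controlled growth by~\eqref{JS-CREg}), hence trivially in the viscosity sense since $K\geq 0$; finally, the viscosity stability Lemma~\ref{lemCSv}, proved earlier with exactly this application in mind, passes the equation to the limit $\varepsilon\to 0$. This is self-contained and uses only the machinery already present in the paper, whereas your route outsources the crux to an external strong minimum principle. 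Your scheme could in principle be completed by appealing to the weak-Harnack literature for kernels comparable to $|z|^{-n-2s}$, but that reference would have to be supplied and matched to the exact hypotheses~\eqref{FL}, \eqref{assym}, \eqref{4.30BIS}; without it, the proof is not complete.

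A smaller point: the integration-by-parts identity you use for $\varphi$ does hold here because $K(x,y)=K(y,x)$ (which follows from combining~\eqref{assym} with the translation invariance~\eqref{4.30BIS}) and the integrability needed is ensured by the smoothness of $\varphi$ in $B_\rho(x_0)$ together with the growth bound~\eqref{JS-CREg}; you invoke this correctly, but it is worth noting that this symmetry is a derived consequence rather than a stated hypothesis.
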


\begin{proof}
By Proposition~\ref{contsol}, we know that~$u \in C(\R^n)$. 

Now, we take a point $x_0 \in B_1$
and a function~$\rho\in C^\infty_0( B_1,[0,1])$ and we
consider an even mollifier~$\rho_\varepsilon:=\varepsilon^{-n}\rho(x/\varepsilon)$, for any~$\varepsilon\in(0,1)$. 
We set~$u_\varepsilon:=u \ast \rho_\varepsilon$ and~$
f_\varepsilon:= f \ast \rho_\varepsilon$ (where we identified~$f$ with its null extension outside~$B_1$).

We claim that
\begin{equation}\label{equamol}
{\mbox{$Au_\varepsilon =f_\varepsilon$ in the weak sense in any ball~$B_r(x_0)$ such that~$B_r(x_0)\Subset B_1$.}}
\end{equation}
To prove this, we take a ball~$B_r(x_0)$ such that~$B_r(x_0)\Subset B_1$ and
a function~$\varphi \in C^\infty_0(B_\rho (x_0))$. We observe that
\begin{equation} \label{triplesp}
\begin{split}&
\int_{\R^n}\left(\;\iint_{\R^{2n}}(u(x+z)-u(y+z))(\varphi(x)-\varphi(y))\rho_\varepsilon(z)K(x,y)\,dx\,dy\right)\,dz \\
=\,&\int_{B_\varepsilon}\left(\quad \iint_{\R^{2n}\setminus (B_r^c(x_0)\times B^c_r(x_0))} (u(x+z)-u(y+z))(\varphi(x)-\varphi(y))\rho_\varepsilon(z)K(x,y)\,dx\,dy \right)\,dz \\
\le\,& \frac{\varepsilon^{-n}}{2}\int_{B_\varepsilon} \left(\quad \iint_{\R^{2n}\setminus (B_1^c\times B^c_1)} (u(x)-u(y))^2 K(x,y)\,dx\,dy +\iint_{\R^{2n}\setminus (B_1^c\times B^c_1)} (\varphi(x)-\varphi(y))^2 K(x,y)\,dx\,dy \right)\,dz \\
<\,& +\infty,
\end{split}
\end{equation}
thanks to~\eqref{4.30BIS}.

Therefore, Tonelli's Theorem gives us that the function
$$(x,y,z)\in \R^{2n}\times \R^n \mapsto
(u(x+z)-u(y+z))(\varphi(x)-\varphi(y))\rho_\varepsilon(z) K(x,y)\quad {\mbox{lies in }} L^1(\R^{2n}\times \R^n).$$
One can interchange the order of integration in \eqref{triplesp}, thanks to Fubini's Theorem, and exploit the definition of $u_\varepsilon$ to obtain
\begin{equation}
\begin{split}
\int_{\R^n}&\left(\;\iint_{\R^{2n}}(u(x+z)-u(y+z))(\varphi(x)-\varphi(y))\rho_\varepsilon(z)K(x,y)\,dx\,dy\right)\,dz \\
&=\iint_{\R^{2n}}\left(\;\int_{\R^{n}}(u(x+z)-u(y+z))(\varphi(x)-\varphi(y))\rho_\varepsilon(z)K(x,y)\,dz\right)\,dx\,dy \\
&=\iint_{\R^{2n}}(u_\varepsilon(x)-u_\varepsilon(y))(\varphi(x)-\varphi(y))K(x,y)\,dx\,dy.
\end{split}
\end{equation}
Then, we can use Fubini's Theorem once again to get
\begin{equation}
\begin{split}
\int_{\R^n} & f_\varepsilon (x)\varphi(x)\,dx \\
&=\int_{\R^n}\left(\;\int_{\R^n}f(x+z)\rho_\varepsilon(z)\varphi(x)\,dx \right)\,dz \\
&=\int_{B_\varepsilon}\left(\;\int_{\R^n}f(\tilde x)\varphi(\tilde x-z)\rho_\varepsilon(z)d\tilde x \right)
\,dz \\
&=\frac12\int_{B_\varepsilon}\left(\;\iint_{\R^{2n}}(u(\tilde x)-u(\tilde y))(\varphi(\tilde x-z)-\varphi(\tilde y-z))\rho_\varepsilon(z)K(\tilde x,\tilde y)d\tilde xd\tilde y\right)\,dz \\
&=\frac12\int_{\R^n}\left(\;\iint_{\R^{2n}}(u(x+z)-u(y+z))(\varphi(x)-\varphi(y))\rho_\varepsilon(z)K(x,y)\,dx\,dy\right)\,dz \\
&=\frac12\iint_{\R^{2n}}(u_\varepsilon(x)-u_\varepsilon(y))(\varphi(x)-\varphi(y))K(x,y)\,dx\,dy,
\end{split}
\end{equation}
since the kernel~$K$
is translation invariant and $u$ satisfies~\eqref{weakd} in weak sense. This shows~\eqref{equamol}.

Now, given~$B_r(x_0)\Subset B_1$, we show that
\begin{equation}\label{123SUOHBVDGVB9yuhfed}
{\mbox{the map 
}}B_r(x_0)\ni x\mapsto \int_{\R^{n}} (u_\varepsilon(x)-u_\varepsilon(y))\,K(x,y)\,dy{\mbox{
is continuous.}}
\end{equation}
For this, we let~$x_k$ be a sequence converging to a given point~$x\in B_r(x_0)$ and we define
$$\zeta_k(z):=(2u_\varepsilon(x_k)-u_\varepsilon(x_k+z)-u_\varepsilon(x_k-z))\,K(0,z).$$
Since~$u_\varepsilon$ is smooth and its growth at infinity is controlled via~\eqref{JS-CREg}, we know that
$$ |2u_\varepsilon(x_k)-u_\varepsilon(x_k+z)-u_\varepsilon(x_k-z)|\le C_\e \min\{ |z|^2, |z|^\alpha\},$$
for some~$C_\e>0$. For this reason and~\eqref{FL},
$$ |\zeta_k(z)|\le\frac{C_\e \min\{ |z|^2, |z|^\alpha\}}{|z|^{n+2s}},$$
up to renaming~$C_\e$ and therefore we are in the position of applying the Dominated Convergence Theorem and
conclude that
$$ \lim_{k\to+\infty}
\int_{\R^n}(2u_\varepsilon(x_k)-u_\varepsilon(x_k+z)-u_\varepsilon(x_k-z))\,K(0,z)\,dz
=
\int_{\R^n}(2u_\varepsilon(x)-u_\varepsilon(x+z)-u_\varepsilon(x-z))\,K(0,z)\,dz.$$
In view of~\eqref{assym} and~\eqref{4.30BIS},
this proves~\eqref{123SUOHBVDGVB9yuhfed}.

We also observe that
\begin{equation}\label{punt0438469}
{\mbox{$Au_\varepsilon =f_\varepsilon$ pointwise in any ball~$B_r(x_0)$ such that~$B_r(x_0)\Subset B_1$.}}
\end{equation}
Indeed, by~\eqref{assym},
\eqref{equamol} and~\eqref{123SUOHBVDGVB9yuhfed},
if~$x\in B_r(x_0)$ and~$\varphi\in C^\infty_0(B_r(x_0))$,
\begin{eqnarray*}
\int_{\R^n} f_\e(x)\varphi(x)\,dx=
\iint_{\R^{2n}}(u_\varepsilon(x)-u_\varepsilon(y)) \varphi(x)K(x,y)\,dx\,dy.
\end{eqnarray*}
Since~$\varphi$ is arbitrary, we arrive at
\begin{eqnarray*}
f_\e(x)=\int_{\R^{n}}(u_\varepsilon(x)-u_\varepsilon(y)) K(x,y)\,dy,
\end{eqnarray*}
from which we obtain~\eqref{punt0438469}.

We also have that
\begin{equation}\label{visc0punt0438469}
{\mbox{$Au_\varepsilon =f_\varepsilon$ in the viscosity sense in any ball~$B_r(x_0)$ such that~$B_r(x_0)\Subset B_1$.}}
\end{equation}
For this, we take a smooth function~$\psi$ touching, say from below, the function~$u_\e$ at some point~$p\in
B_r(x_0)$. Since the kernel~$K$ is positive (thanks to~\eqref{FL}) and recalling~\eqref{punt0438469}, we have that
\begin{eqnarray*}
&& f_\e(p)=\int_{\R^{n}}(u_\varepsilon(p)-u_\varepsilon(y)) K(p,y)\,dy
=\int_{\R^{n}}(\psi(p)-u_\varepsilon(y)) K(p,y)\,dy\\&&\qquad\qquad
\le\int_{\R^{n}}(\psi(p)-\psi(y)) K(p,y)\,dy=A\psi(p).
\end{eqnarray*}
This and a similar computation when~$\psi$ touches from above give~\eqref{visc0punt0438469}.

We also remark that~$u_\e$ and~$f_\e$ converge uniformly to~$u$ and~$f$, respectively,
in any ball~$B_r(x_0)\Subset B_1$, due to Theorem~9.8 in~\cite{MR3381284}.
In addition, by~\eqref{JS-CREg}, we see that, for every~$y\in\R^n\setminus B_{3r}(x_0)$,
$$ |u_\e(y)|\le \int_{B_\e} |u(y-z)|\rho_\e(z)\,dz\le C \int_{B_\e} |y-z|^\alpha \rho_\e(z)\,dz
\le C|y|^\alpha,
$$
up to renaming~$C>0$. As a consequence of this and~\eqref{FL}, we have that, for every~$y\in\R^n\setminus B _{3r}(x_0)$,
$$ |u(y)-u_\e(y)|\,\sup_{x\in B_r(x_0)}
 K(x,y)\le C |y|^\alpha \sup_{x\in B_r(x_0)}\frac{1}{|x-y|^{n+2s}}\le \frac{C}{|y|^{n+2s-\alpha}},$$
up to relabeling~$C>0$. Since~$\alpha<\min\{2s,1\}$, this function is in $L^1(\R^n\setminus  B_{3r}(x_0))$, and therefore
we exploit the Dominated Convergence Theorem to obtain that
$$ \lim_{\e\searrow0} \int_{\R^n\setminus B_{3r}(x_0)}|u(y)-u_\e(y)|\,\sup_{x\in B_1} K(x,y)\,dy=0.$$
Consequently, condition~\eqref{CONUK2v} is satisfied, and
therefore we can apply Lemma~\ref{lemCSv}, thus obtaining that~$Au=f$
in the viscosity sense, as desired.
\end{proof}

With this preliminary work, we can now address the existence of solutions for
a Dirichlet problem in a generalized setting. 

\begin{theorem}\label{EQUI90-97900BVA}
Let $K$ be as in~\eqref{FL}
\eqref{assym}, \eqref{CONZTINY},
\eqref{emmecond}
and~\eqref{4.30BIS}. Let $f$ be bounded and continuous in~$B_1$ and~$g\in C^\alpha(\R^n)$
for some~$\alpha\in(0,\min\{2s,1\})$. Assume that
$$ |g(x)|\le C|x|^\alpha\qquad{\mbox{ for all }}x\in\R^n\setminus B_1.$$

Then, there exists a
function $u \in \mathcal{V}_{K}$ such that
\begin{equation}  \label{DFD}
\begin{cases} 
Au\stackrel{m}{=}f&{\mbox{ in }} B_1,\\
u=g& {\mbox{ in }}B_1^c.
\end{cases}
\end{equation}
Also, the solution to~\eqref{DFD} is not unique, since
the space of solutions of \eqref{DFD} has dimension $N_m$, with
\begin{equation*} \label{}
N_m:=
\sum_{j=0}^{m-1}
\binom{j+n-1}{n-1}.
\end{equation*}
\end{theorem}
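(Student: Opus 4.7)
The plan is to follow the blueprint of Theorem~\ref{KS:098iKS-904596}, substituting the abstract well-posedness hypothesis on the classical Dirichlet problem there by Proposition~\ref{pala} (which yields weak solutions through strict convexity of the energy~\eqref{energy}) combined with Proposition~\ref{wisv} (which upgrades those weak solutions to viscosity solutions). Under the standing hypotheses, $K\in\mathcal{K}_{m,\vartheta}^+$ for every $\vartheta\in(2s,2]$, and both~\eqref{locint-RINF-PRE} and~\eqref{locint-RINF} hold, so the full viscosity machinery of this section is available; I fix once and for all some $\vartheta\in(2s,2]$, which in particular places $K$ in $\mathcal{K}_{m,\vartheta}$ and enables Lemma~\ref{EQUIBVA}.

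\textbf{Splitting and existence.} I would pick a cutoff $\phi\in C^\infty_c(B_5,[0,1])$ with $\phi\equiv 1$ on $\overline{B_4}$ and decompose $g=u_1+\tilde g$ with $u_1:=(1-\phi)g$ and $\tilde g:=\phi g$. Since $u_1$ vanishes on $\overline{B_4}$, the local regularity requirement in~\eqref{DEF:SPAZ} is trivially satisfied, and the growth $|g(y)|\le C|y|^\alpha$ (with $\alpha<2s$) against the decay $|\partial_x^\eta K(x,y)|\lesssim|x-y|^{-(n+2s+|\eta|)}$ forces~\eqref{ring} and~\eqref{mcond}, so $u_1\in\mathcal{C}_{\vartheta,K}$. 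By Remark~\ref{ifsmooth} one has $Au_1\stackrel{m}{=}f_{u_1}$ pointwise in $B_1$ with $f_{u_1}$ bounded and continuous (by the explicit representation of Corollary~\ref{corlim}), and Lemma~\ref{EQUIBVA} transports this to the viscosity sense. The remaining piece $\tilde g\in C^\alpha(\R^n)$ is compactly supported in $B_5$ and satisfies the growth bound of Proposition~\ref{wisv} trivially; setting $\tilde f:=f-f_{u_1}\in L^2(B_1)$, Proposition~\ref{pala} produces the unique weak solution $\tilde u\in J_{\tilde g}(B_1)$ of $A\tilde u=\tilde f$ with boundary value $\tilde g$, and Proposition~\ref{wisv} promotes it to a viscosity solution. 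Corollary~\ref{corequivv} and Remark~\ref{higherorderv} convert this into $A\tilde u\stackrel{m}{=}\tilde f$ in the viscosity sense.

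I would then take $u:=u_1+\tilde u$. Outside $B_1$ one has $u=(1-\phi)g+\phi g=g$. At each fixed $R$ the linearity of $A(\chi_R\cdot)$ combines the two cut-off decompositions into $A(\chi_R u)=(f^1_R+f^2_R)+(P^1_R+P^2_R)$ (understood in the viscosity sense), and uniform convergence $f^1_R+f^2_R\to f_{u_1}+\tilde f=f$ on $B_1$ follows from Lemma~\ref{cor37} and the uniform convergence built into Definition~\ref{defv}. Addition of the two viscosity equalities is legitimate because $A(\chi_R u_1)$ is explicitly the continuous classical integral produced by Theorem~\ref{cut-off} applied to $u_1$, while Proposition~\ref{contsol} together with interior regularity for fractional-type kernels makes $\tilde u$ continuous on $\R^n$ and smooth enough inside $B_1$ to be handled pointwise. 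This yields $Au\stackrel{m}{=}f$ in $B_1$ in the viscosity sense and establishes existence.

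\textbf{Dimension of the solution space.} For the non-uniqueness part I would mirror the closing argument of Theorem~\ref{KS:098iKS-904596}. For every polynomial $P$ with $\deg P\le m-1$, Propositions~\ref{pala} and~\ref{wisv} supply a viscosity solution $\tilde u_P$ of $A\tilde u_P=P$ in $B_1$, $\tilde u_P=0$ in $B_1^c$; Corollary~\ref{corequivv}, Remark~\ref{higherorderv} and Remark~\ref{plusPv} then give $A\tilde u_P\stackrel{m}{=}0$ in the viscosity sense, so $u+\tilde u_P$ also solves~\eqref{DFD}. Conversely, if $u,v$ both solve~\eqref{DFD}, then $w:=u-v$ satisfies $Aw\stackrel{m}{=}0$ in $B_1$ and vanishes outside; Lemma~\ref{lemmajv} with $j=0$ combined with Corollary~\ref{corequivv} yields a polynomial $Q$ of degree at most $m-1$ with $Aw=Q$ in $B_1$ in the viscosity sense. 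The uniqueness in Proposition~\ref{pala}, once $w$ has been identified with the unique weak solution of the same problem via the regularity theory invoked in Propositions~\ref{contsol} and~\ref{wisv}, forces $w=\tilde u_Q$. The solution set of~\eqref{DFD} is therefore linearly isomorphic to the space of polynomials of degree at most $m-1$ in $n$ variables, of dimension $N_m$. \emph{The main obstacle} is precisely this last identification: Proposition~\ref{pala} delivers uniqueness only inside the weak class, so one must invoke interior fractional elliptic regularity to guarantee that the viscosity solution $w$ lies in it; the analogous, milder regularity input is what legitimizes the linearity step in the existence argument.
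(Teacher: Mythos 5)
Your proposal follows the same structural skeleton as the paper's proof: write $g=u_1+\tilde g$ with $u_1$ vanishing near $B_1$, compute $Au_1\stackrel{m}{=}f_{u_1}$ via Remark~\ref{ifsmooth} and Lemma~\ref{EQUIBVA}, solve the Dirichlet problem with data $(\tilde f,\tilde g)$ by Propositions~\ref{pala} and~\ref{wisv}, upgrade through Corollary~\ref{corequivv} and Remark~\ref{higherorderv}, and superpose; the dimension count is also carried out exactly as in the proof of Theorem~\ref{KS:098iKS-904596}, via \eqref{PIUUETv}, \eqref{Pvisc}, Lemma~\ref{lemmajv} and Corollary~\ref{corequivv}.

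The one genuine deviation is your choice of decomposition: you use a smooth cutoff $\phi\in C^\infty_c(B_5,[0,1])$, $\phi\equiv1$ on $\overline{B_4}$, setting $u_1:=(1-\phi)g$, $\tilde g:=\phi g$, whereas the paper uses the sharp characteristic functions $u_1:=\chi_{B_4^c}g$, $\tilde g:=\chi_{B_4\setminus B_1}g$. Your variant is preferable here. Proposition~\ref{wisv} requires the exterior datum to lie in $C^\alpha(\R^n)$, and the paper's $\tilde g$ is discontinuous across $\partial B_4$ (and across $\partial B_1$ unless $g$ vanishes there), so it does not literally satisfy that hypothesis; one would have to argue separately that jumps away from $\partial B_1$ are harmless for the continuity results invoked from~\cite{R16} and~\cite{MR4135310}. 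Your $\tilde g=\phi g$ is $C^\alpha(\R^n)$ with compact support and sidesteps this entirely. The remark you append about the uniqueness step — namely, that Proposition~\ref{pala} gives uniqueness only in the weak class, so the viscosity solution $w$ of \eqref{KSMD-DIVv} must first be placed in $J_0(B_1)$ before it can be identified with $\tilde u_P$ — is a legitimate subtlety that the paper's proof also leaves implicit; you flag it but do not fully close it, which is acceptable at this level of detail but worth noting as the place where additional regularity input enters.
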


\begin{proof}
Firstly, we prove the existence of solutions for~\eqref{DFD}. For this goal, we set 
\begin{equation*}
u_1:=\chi_{B_4^c}\,g \qquad {\mbox{and }} \qquad\tilde g:=\chi_{B_4\setminus B_1} \,g.
\end{equation*}
Since $u_1$ is identically zero in $B_4$ and
$K\in \mathcal{K}_{m,\vartheta}^+$, we can write $Au_1\stackrel{m}{=}f_{u_1}$ in $B_1$ in both
pointwise and viscosity sense,
for some function $f_{u_1}$, due to Remark~\ref{ifsmooth} and Lemma~\ref{EQUIBVA}.

We now define $\tilde f:=f-f_{u_1}$ and consider
the Dirichlet problem given by 
\begin{equation} \label{VDP}
\begin{cases} 
A\tilde u=\tilde f&{\mbox{ in }} B_1,\\
\tilde u= \tilde g&{\mbox{ in }} B_1^c.
\end{cases}
\end{equation}
By Proposition \ref{pala}, we find that \eqref{VDP} has a unique weak solution $\tilde u$.
Moreover, thanks to Proposition~\ref{wisv}, we get that $\tilde u$ is a viscosity solution of~\eqref{VDP}.

Furthermore, by Remark \ref{higherorderv} and Corollary~\ref{corequivv}
we obtain that
\begin{equation*} 
\begin{cases} 
A\tilde u\stackrel{m}{=}\tilde f& {\mbox{ in }}B_1,\\
\tilde u= \tilde g&{\mbox{ in }}B_1^c.
\end{cases}
\end{equation*}
Now, we set $u:=u_1+\tilde u$ and we get that $Au=Au_1+A
\tilde u\stackrel{m}{=}f_{u_1}+\tilde f=f$ in $B_1$.
Moreover, we have that~$u=u_1+\tilde g=g$ in $B_1^c$. These observations give that is~$u$ is solution of \eqref{DFD}.
This proves the existence of solution for~\eqref{DFD}.

Now, we focus on the second part of the proof. Namely we establish that solutions of~\eqref{DFD}
are not unique and we determine the dimension
of the corresponding linear space. For this, we notice that, exploiting Propositions~\ref{pala}
and~\ref{wisv}, one can find a unique solution $\tilde u_P$ of the problem
\begin{equation}\label{PIUUETv}
\begin{cases} 
A\tilde u_P=P&{\mbox{ in }}B_1,\\
\tilde u_P=0&{\mbox{ in }} B_1^c.
\end{cases}
\end{equation}
Furthermore, $A\tilde u_P\stackrel{0}{=}P$ in $B_1$, due to Corollary \ref{corequivv}.
Using Remark~\ref{higherorderv}, we obtain that $A\tilde u_P\stackrel{m}{=}P$ in $B_1$.
Moreover, from Remark~\ref{plusPv}, we obtain that $\tilde u_P$ is a solution of
\begin{equation} \label{Pvisc} 
\begin{cases} 
A\tilde u_P\stackrel{m}{=}0&{\mbox{ in }}B_1,\\
\tilde u_P=0&{\mbox{ in }}B_1^c.
\end{cases}
\end{equation}
This yields that if $u$ is a solution of \eqref{DFD}, then $u+\tilde u_P$ is also a solution of \eqref{DFD}.

Viceversa, if $u$ and $v$ are two solutions of~\eqref{DFD}, then $w:=u-v$ is a solution of
\begin{equation*} 
\begin{cases} 
Aw\stackrel{m}{=}0&{\mbox{ in }}B_1,\\
w=0&{\mbox{ in }}B_1^c.
\end{cases}
\end{equation*}
Here we can apply Lemma \ref{lemmajv} with $j:=0$ thus obtaining that $Aw\stackrel{0}{=}P$ in $B_1$, where $P$ is a polynomial of $\deg P \le m-1$. We use Corollary \ref{corequivv} one more time to find that
\begin{equation}\label{KSMD-DIVv} 
\begin{cases} 
Aw=P&{\mbox{ in }}B_1,\\
w=0&{\mbox{ in }}B_1^c.
\end{cases}
\end{equation}
Therefore, the uniqueness of the solution of  \eqref{KSMD-DIVv},
confronted with~\eqref{PIUUETv}, gives us that $w=\tilde u_P$, and thus~$v=u+\tilde u_P$.

This reasoning gives that the space of solutions of \eqref{DFD} is isomorphic to the space of polynomials with degree
less than or equal to $m-1$, which has dimension $N_m$, given by \eqref{soldim}.
\end{proof}

\section*{Acknowledgements}

The first and third authors are members of INdAM and AustMS.
The first author has been supported by 
the Australian Research Council DECRA DE180100957
``PDEs, free boundaries and applications''. The second author is supported by the fellowship INDAM-DP-COFUND-2015 ``INdAM Doctoral Programme in
Mathematics and/or Applications Cofunded by Marie Sklodowska-Curie Actions'', Grant 713485.
The third author has been supported by
the Australian Laureate Fellowship
FL190100081 ``Minimal surfaces, free boundaries and partial differential equations''.
Part of this work has been completed during a very pleasant visit of the second author
to the University of Western Australia, that we thank for the warm hospitality.

\begin{bibdiv}\begin{biblist}

\bib{MR4038144}{article}{
   author={Abatangelo, Nicola},
   author={Ros-Oton, Xavier},
   title={Obstacle problems for integro-differential operators: higher
   regularity of free boundaries},
   journal={Adv. Math.},
   volume={360},
   date={2020},
   pages={106931, 61},
   issn={0001-8708},
   review={\MR{4038144}},
   doi={10.1016/j.aim.2019.106931},
}

\bib{MR3967804}{article}{
   author={Abatangelo, Nicola},
   author={Valdinoci, Enrico},
   title={Getting acquainted with the fractional Laplacian},
   conference={
      title={Contemporary research in elliptic PDEs and related topics},
   },
   book={
      series={Springer INdAM Ser.},
      volume={33},
      publisher={Springer, Cham},
   },
   date={2019},
   pages={1--105},
   review={\MR{3967804}},
}

\bib{MR4135310}{article}{
   author={Audrito, Alessandro},
   author={Ros-Oton, Xavier},
   title={The Dirichlet problem for nonlocal elliptic operators with
   $C^{0,\alpha}$ exterior data},
   journal={Proc. Amer. Math. Soc.},
   volume={148},
   date={2020},
   number={10},
   pages={4455--4470},
   issn={0002-9939},
   review={\MR{4135310}},
   doi={10.1090/proc/15121},
}

\bib{BUCK}{article}{
author={Buckingham, R. A.},
date={1938},
title={The classical equation of state of gaseous helium, neon and argon},
journal={Proc. R. Soc. Lond. A},
volume={168},
pages={264--283},
doi={10.1098/rspa.1938.0173},
}

\bib{MR3770173}{article}{
   author={Cabr\'{e}, Xavier},
   author={Fall, Mouhamed Moustapha},
   author={Weth, Tobias},
   title={Near-sphere lattices with constant nonlocal mean curvature},
   journal={Math. Ann.},
   volume={370},
   date={2018},
   number={3-4},
   pages={1513--1569},
   issn={0025-5831},
   review={\MR{3770173}},
   doi={10.1007/s00208-017-1559-6},
}

\bib{CC95}{book}{
   author={Caffarelli, Luis A.},
   author={Cabr\'{e}, Xavier},
   title={Fully nonlinear elliptic equations},
   series={American Mathematical Society Colloquium Publications},
   volume={43},
   publisher={American Mathematical Society, Providence, RI},
   date={1995},
   pages={vi+104},
   isbn={0-8218-0437-5},
   review={\MR{1351007}},
   doi={10.1090/coll/043},
}

\bib{CS11}{article}{
   author={Caffarelli, Luis},
   author={Silvestre, Luis},
   title={Regularity results for nonlocal equations by approximation},
   journal={Arch. Ration. Mech. Anal.},
   volume={200},
   date={2011},
   number={1},
   pages={59--88},
   issn={0213-2230},
   review={\MR{2781586}},
   doi={10.1007/s00205-010-0336-4},
}

\bib{BOOKCA}{book}{
author = {Carbotti, Alessandro},
author = {Dipierro, Serena}
author = {Valdinoci, Enrico},
 title = {Local density of solutions to fractional equations},
 FJournal = {De Gruyter Studies in Mathematics},
 Journal = {{De Gruyter Stud. Math.}},
 ISSN = {0179-0986},
 Volume = {74},
 ISBN = {978-3-11-066069-2/hbk; 978-3-11-066435-5/ebook},
 Pages = {xi + 129},
 Year = {2019},
 Publisher = {Berlin: De Gruyter},
 MSC2010 = {35R11 35-01}
}

\bib{chierchia}{book}{
   author={Chierchia, Luigi},
   title={Lezioni di Analisi Matematica 2},
   publisher={Aracne},
   date={1997},
}

\bib{MR4303657}{article}{
   author={del Teso, F\'{e}lix},
   author={G\'{o}mez-Castro, David},
   author={V\'{a}zquez, Juan Luis},
   title={Three representations of the fractional $p$-Laplacian: Semigroup,
   extension and Balakrishnan formulas},
   journal={Fract. Calc. Appl. Anal.},
   volume={24},
   date={2021},
   number={4},
   pages={966--1002},
   issn={1311-0454},
   review={\MR{4303657}},
   doi={10.1515/fca-2021-0042},
}

\bib{MR2944369}{article}{
   author={Di Nezza, Eleonora},
   author={Palatucci, Giampiero},
   author={Valdinoci, Enrico},
   title={Hitchhiker's guide to the fractional Sobolev spaces},
   journal={Bull. Sci. Math.},
   volume={136},
   date={2012},
   number={5},
   pages={521--573},
   issn={0007-4497},
   review={\MR{2944369}},
   doi={10.1016/j.bulsci.2011.12.004},
}

\bib{MR3988080}{article}{
   author={Dipierro, Serena},
   author={Savin, Ovidiu},
   author={Valdinoci, Enrico},
   title={Definition of fractional Laplacian for functions with polynomial
   growth},
   journal={Rev. Mat. Iberoam.},
   volume={35},
   date={2019},
   number={4},
   pages={1079--1122},
   issn={0213-2230},
   review={\MR{3988080}},
   doi={10.4171/rmi/1079},
}

\bib{DSV19}{article}{
   author={Dipierro, Serena},
   author={Savin, Ovidiu},
   author={Valdinoci, Enrico},
   title={On divergent fractional Laplace equations},
   language={English, with English and French summaries},
   journal={Ann. Fac. Sci. Toulouse Math. (6)},
   volume={30},
   date={2021},
   number={2},
   pages={255--265},
   issn={0240-2963},
   review={\MR{4297378}},
   doi={10.5802/afst.1673},
}

\bib{2021arXiv210107941D}{article}{
       author = {Dipierro, Serena},
       author = {Valdinoci, Enrico},
        title = {Elliptic partial differential equations from an elementary viewpoint},
      journal = {arXiv e-prints},
date = {2021},
          eid = {arXiv:2101.07941},
        pages = {arXiv:2101.07941},
       adsurl = {https://ui.adsabs.harvard.edu/abs/2021arXiv210107941D},
}

\bib{MR3916700}{article}{
   author={Garofalo, Nicola},
   title={Fractional thoughts},
   conference={
      title={New developments in the analysis of nonlocal operators},
   },
   book={
      series={Contemp. Math.},
      volume={723},
      publisher={Amer. Math. Soc., [Providence], RI},
   },
   date={2019},
   pages={1--135},
   review={\MR{3916700}},
   doi={10.1090/conm/723/14569},
}

\bib{MR3293447}{article}{
   author={Grubb, Gerd},
   title={Local and nonlocal boundary conditions for $\mu$-transmission and
   fractional elliptic pseudodifferential operators},
   journal={Anal. PDE},
   volume={7},
   date={2014},
   number={7},
   pages={1649--1682},
   issn={2157-5045},
   review={\MR{3293447}},
   doi={10.2140/apde.2014.7.1649},
}

\bib{MR1421222}{article}{
   author={Kurokawa, Takahide},
   title={Hypersingular integrals and Riesz potential spaces},
   journal={Hiroshima Math. J.},
   volume={26},
   date={1996},
   number={3},
   pages={493--514},
   issn={0018-2079},
   review={\MR{1421222}},
}

\bib{MR3613319}{article}{
   author={Kwa\'{s}nicki, Mateusz},
   title={Ten equivalent definitions of the fractional Laplace operator},
   journal={Fract. Calc. Appl. Anal.},
   volume={20},
   date={2017},
   number={1},
   pages={7--51},
   issn={1311-0454},
   review={\MR{3613319}},
   doi={10.1515/fca-2017-0002},
}

\bib{MORS}{article}{
  title = {Diatomic molecules according to the wave mechanics. II. Vibrational levels},
  author = {Morse, Philip M.},
  journal = {Phys. Rev.},
  volume = {34},
  issue = {1},
  pages = {57--64},
  year = {1929},
  doi = {10.1103/PhysRev.34.57},
}

\bib{P18}{article}{
   author={Palatucci, Giampiero},
   title={The Dirichlet problem for the $p$-fractional Laplace equation},
   journal={Nonlinear Anal.},
   volume={177},
   date={2018},
   number={part B},
   part={part B},
   pages={699--732},
   issn={0362-546X},
   review={\MR{3886598}},
   doi={10.1016/j.na.2018.05.004},
}

\bib{R16}{article}{
   author={Ros-Oton, Xavier},
   title={Nonlocal elliptic equations in bounded domains: a survey},
   journal={Publ. Mat.},
   volume={60},
   date={2016},
   number={1},
   pages={3--26},
   issn={0214-1493},
   review={\MR{3447732}},
   doi={DOI: 10.5565/PUBLMAT\_60116\_01},
}
		
\bib{MR2707618}{book}{
   author={Silvestre, Luis Enrique},
   title={Regularity of the obstacle problem for a fractional power of the
   Laplace operator},
   note={Thesis (Ph.D.)--The University of Texas at Austin},
   publisher={ProQuest LLC, Ann Arbor, MI},
   date={2005},
   pages={95},
   isbn={978-0542-25310-2},
   review={\MR{2707618}},
}

\bib{SV14}{article}{
   author={Servadei, Raffaella},
   author={Valdinoci, Enrico},
   title={Weak and viscosity solutions of the fractional Laplace equation},
   journal={Publ. Mat.},
   volume={58},
   date={2014},
   number={1},
   pages={133--154},
   issn={0214-1493},
   review={\MR{3161511}},
   doi={10.5565/PUBLMAT\_58114\_06}
}

\bib{MR3381284}{book}{
   author={Wheeden, Richard L.},
   author={Zygmund, Antoni},
   title={Measure and integral},
   series={Pure and Applied Mathematics (Boca Raton)},
   edition={2},
   note={An introduction to real analysis},
   publisher={CRC Press, Boca Raton, FL},
   date={2015},
   pages={xvii+514},
   isbn={978-1-4987-0289-8},
   review={\MR{3381284}},
}

\bib{MR2033094}{book}{
   author={Zorich, Vladimir A.},
   title={Mathematical analysis. I},
   series={Universitext},
   note={Translated from the 2002 fourth Russian edition by Roger Cooke},
   publisher={Springer-Verlag, Berlin},
   date={2004},
   pages={xviii+574},
   isbn={3-540-40386-8},
   review={\MR{2033094}},
}
		
\end{biblist}
\end{bibdiv}

\end{document}